\newtheorem{thm}{Theorem}[section]
\newtheorem*{thmn}{Theorem}
\newtheorem*{mainthm}{Main Theorem}
\newtheorem{lemma}[thm]{Lemma}
\newtheorem{cor}[thm]{Corollary}
\theoremstyle{definition}
\newtheorem{defn}[thm]{Definition}
\newtheorem{example}[thm]{Example}
\theoremstyle{remark}
\newtheorem*{remark}{Remark}
\DeclareMathOperator{\Aut}{Aut} 
 \DeclareMathOperator{\Char}{Char}
\newcommand{\h}[1]{\mathbf{#1}}
\newcommand{\ul}[1]{\underline{#1}}
\newcommand{\m}[1]{\mathcal{#1}}
\newcommand{\F}{\mathbb F}
\title{Characteristic Subgroups of Finite Abelian Groups}
\author{Brent L. Kerby, Emma L. Turner \\ Brigham Young University}
\begin{document}
\bibliographystyle{plain}
\maketitle

\begin{abstract}
We consider the question: When do two finite abelian groups have isomorphic lattices of characteristic subgroups? An explicit description of the characteristic subgroups of such groups enables us to give a complete answer to this question in the case where at least one of the groups has odd order. An ``exceptional" isomorphism, which occurs between the lattice of characteristic subgroups of $Z_p\times Z_{p^2}\times Z_{p^4}$ and $Z_{p^2} \times Z_{p^5}$, for any prime $p$, is noteworthy.
\end{abstract}

In 1939, Baer \cite{baer_subabel} considered the question: When do two groups have isomorphic lattices of subgroups?  Since in general this is a very difficult problem, Baer restricted his attention primarily to the case of abelian groups. Even in this case, a complete solution has only very recently been obtained, in \cite{calug}. Most of the complications arise in the case where both groups are infinite of torsion-free rank 1. In particular, if both groups are finite, the situation is fairly uncomplicated; the following theorem, which provides a complete solution to the problem in this case, follows immediately from Theorem 1.1(b) of \cite{calug}, where the result is credited to Baer:

\begin{thmn}
Let $G$ and $H$ be two finite abelian groups. Then $G$ and $H$ have isomorphic lattices of subgroups if and only if there is a bijection $\phi$ from the set of Sylow subgroups of $G$ to the set of Sylow subgroups of $H$ such that for all Sylow subgroups $P$ of $G$,
\begin{enumerate}
\item[(i)] If $P$ is cyclic of order $p^n$ for prime $p$, then $\phi(P)$ is cyclic of order $q^n$ for some prime $q$.
\item[(ii)] If $P$ is not cyclic, then $\phi(P) \cong P$.
\end{enumerate}
\end{thmn}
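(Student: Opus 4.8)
The plan is to reduce everything to the prime components of $G$ and $H$. The starting observation is that for a finite abelian group $G$ with Sylow decomposition $G = \prod_p P_p$, coprimality forces every subgroup $U \leq G$ to split as $U = \prod_p (U \cap P_p)$; the map $U \mapsto (U \cap P_p)_p$ is then a lattice isomorphism, so $\Sub(G) \cong \prod_p \Sub(P_p)$, the product taken over the Sylow subgroups of $G$. Thus the whole problem is governed by how the factor lattices $\Sub(P)$ behave under lattice isomorphism, and I would treat the two directions separately.

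For the ``if'' direction, suppose a bijection $\phi$ as described exists. It suffices to check $\Sub(P) \cong \Sub(\phi(P))$ for each Sylow subgroup $P$ of $G$, since the isomorphism then assembles across the product $\prod_P \Sub(P) \cong \prod_P \Sub(\phi(P))$. When $P$ is cyclic of order $p^n$, $\Sub(P)$ is a chain of length $n$; by (i), $\phi(P)$ is cyclic of order $q^n$, whose subgroup lattice is the same chain, so the two are isomorphic --- crucially, a chain records only the length $n$ and not the prime. When $P$ is non-cyclic, (ii) gives $\phi(P) \cong P$, so the lattices agree trivially. This direction is routine.

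The ``only if'' direction is the substantial one. Given a lattice isomorphism $\Sub(G) \cong \Sub(H)$, I would first decompose each side into directly indecomposable factors. Each $\Sub(P)$ with $P$ a nontrivial $p$-group is directly indecomposable as a lattice: a nontrivial lattice direct splitting of $\Sub(P)$ would force a decomposition of $P$ into subgroups of coprime order (otherwise diagonal subgroups obstruct the product), impossible for a $p$-group. By the Krull--Schmidt--Hashimoto uniqueness of direct decompositions for finite lattices, the two multisets of indecomposable factors $\{\Sub(P)\}$ and $\{\Sub(Q)\}$ coincide up to isomorphism, yielding a bijection $\phi$ from the Sylow subgroups of $G$ to those of $H$ with $\Sub(P) \cong \Sub(\phi(P))$. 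It then remains to verify that such a factorwise isomorphism forces exactly (i) and (ii): a lattice $\Sub(P)$ is a chain iff $P$ is cyclic, with chain length $n$ when $|P| = p^n$, which gives (i) (the prime staying free); and if $P$ is non-cyclic then $\Sub(P)$ contains a length-two interval $[X,Y]$ with $Y/X \cong Z_p \times Z_p$, realized as a diamond $M_{p+1}$ with exactly $p+1$ middle elements, so reading off the largest such count recovers $p$ as a lattice invariant. Invoking Baer's theorem that lattice-isomorphic finite abelian $p$-groups for a fixed prime are isomorphic then gives $\phi(P) \cong P$, which is (ii).

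The main obstacle I anticipate is the ``only if'' direction, concentrated in two places: justifying the unique indecomposable factorization so that a bijection of Sylow factors exists at all, and then extracting both the prime and the full isomorphism type from a non-chain $p$-group lattice. The conceptual crux --- and the very reason (i) and (ii) differ --- is that a chain forgets its prime, whereas a non-chain encodes it through the $p+1$ atoms of its diamond intervals.
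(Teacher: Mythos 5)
The paper never proves this theorem: it is stated as following immediately from Theorem 1.1(b) of \cite{calug}, where the result is credited to Baer, so there is no internal argument to compare yours against. Judged on its own, your skeleton is sound, and every step you actually spell out is correct. The splitting $\Sub(G)\cong\prod_p \Sub(G_p)$ over Sylow subgroups is standard; the ``if'' direction is indeed routine; the direct indecomposability of $\Sub(P)$ for a nontrivial abelian $p$-group $P$ does follow from your diagonal obstruction (if $\Sub(P)\cong L_1\times L_2$ nontrivially, the subgroups $A$ and $B$ corresponding to $(1,0)$ and $(0,1)$ satisfy $P=A\oplus B$ and every subgroup $C$ would have to equal $(C\cap A)\oplus(C\cap B)$, which a diagonal $\langle a+b\rangle$ with $a\in A$, $b\in B$ of order $p$ violates); uniqueness of direct decomposition of finite lattices --- the same fact this paper invokes from \cite[Corollary III.4.4]{gratzer} in the $\Char$ setting --- then produces the bijection $\phi$; chains record cyclicity and the exponent $n$ but forget the prime, which is exactly condition (i); and since every height-two interval of $\Sub(P)$ is the subgroup lattice of an abelian group of order $p^2$, diamond intervals have exactly $p+1$ middle elements, so the prime of a non-cyclic factor is a lattice invariant.

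The one caveat concerns your final invocation of ``Baer's theorem that lattice-isomorphic finite abelian $p$-groups for a fixed prime are isomorphic.'' That statement is not an independent external lemma: it is precisely the non-cyclic, equal-prime case of the theorem you are proving, and it is where all of the real difficulty lives --- one must show that the partition $(\lambda_1,\dots,\lambda_n)$ of a non-cyclic abelian $p$-group can be recovered from its subgroup lattice. So what you have is a correct and genuinely informative reduction of the general statement to that fixed-prime core: the Sylow splitting, indecomposability, chain, and diamond arguments are exactly the right glue, and none of them appear in the paper. But it is not a self-contained proof; completing it would require establishing the lattice-invariance of the type, or else openly citing it as Baer's 1939 result --- which is, in fairness, the same deferral the paper itself makes for the entire statement.
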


Given this success, it seems natural to consider a related question: When do two groups have isomorphic lattices of characteristic subgroups? Again, the general problem seems to be very difficult. We will consider only the case of finite abelian groups. We show in \S4 that this problem can be reduced to the case in which both groups are abelian $p$-groups (for the same prime $p$). Our main result then gives a solution in the case $p\neq 2$:

\begin{mainthm}\label{mainthm}
Given a prime $p\neq 2$ and abelian $p$-groups
\begin{align*}
G=Z_{p^{\lambda_1}}\times Z_{p^{\lambda_2}}\times \cdots \times Z_{p^{\lambda_n}},& \quad
0 < \lambda_1 < \lambda_2 < \cdots < \lambda_n,
\\
H=Z_{p^{\mu_1}}\times Z_{p^{\mu_2}}\times \cdots \times Z_{p^{\mu_m}},&\quad
0 < \mu_1 < \mu_2 < \cdots < \mu_m, \quad m\leq n
\end{align*}
then $G$ and $H$ have isomorphic lattices of characteristic subgroups if and only if
\begin{enumerate}
\item[(i)] $G=H$, or
\item[(ii)] $G=Z_{p^k}\times Z_{p^{k+1}}$ and $H=Z_{p^{2k+1}}$ for some $k\in\mathbb{N}$, or
\item[(iii)] $G=Z_p\times Z_{p^2}\times Z_{p^4}$ and $H=Z_{p^2}\times Z_{p^5}$.
\end{enumerate}
\end{mainthm}

Theorem \ref{duplat} below shows that the restriction on $\lambda_i$ and $\mu_i$ in the Main Theorem (namely, that neither group may have repeated factors in its direct decomposition) is without loss of generality. The only remaining case then is to determine when two abelian 2-groups have isomorphic lattices of characteristic subgroups. The situation in this case is more complicated, and we have not yet been able to obtain a complete solution, although we are optimistic that one is attainable with further effort.

Given a group $G$, the \emph{automorphism classes} of $G$ are the orbits of $\Aut(G)$ acting on $G$ in the natural way. We will say that two elements of $G$ are \emph{automorphic} if they are in the same automorphism class. A characteristic subgroup of $G$ may then be defined as a subgroup which is a union of automorphism classes of $G$. We denote the lattice of characteristic subgroups of $G$ by $\Char(G)$. In \S1 and \S2, we give an explicit description of the automorphism classes and characteristic subgroups, respectively, of a finite abelian group $G$, as an understanding of these is prerequisite for approaching our main problem. These topics were considered already in 1905 and 1920 by G. A. Miller \cite{miller,miller2} and again, independently, in 1934 by Baer, who considered the more general case of periodic abelian groups \cite{baer}, and finally in 1935 by Birkhoff \cite{birkhoff_subabel}. We feel it is necessary, however, to give an independent treatment here for several reasons: first, in some cases we will need a more explicit description than has been given previously; second, in the earlier works some of the key proofs have been omitted or are incomplete, and this has led to some significantly erroneous claims (e.g., an error in \cite[p. 23]{miller} is discussed by Birkhoff in \cite[p. 393]{birkhoff_subabel}). Our method in \S1 and \S2 differs in several respects from earlier works, and many of our results here are new. We have identified those results which have appeared in earlier works, along with all the authors who proved or stated them previously.

In \S3 we collect some preliminary results on the lattice structure of $\Char(G)$, enabling us to prove our main result in \S4.

\ \\
Note: Most of the results in \S1-\S3 appeared as part of the Master's thesis of the first author \cite{kerby_masters}, under the supervision of Stephen P. Humphries.

\section{Automorphism Classes of Abelian Groups}

It is well known that any finite abelian group $G$ may be written as the direct product of its Sylow subgroups:
$$G = G_{p_1} \times G_{p_2} \times \cdots \times G_{p_n}.$$
Since the Sylow subgroups of an abelian group are characteristic, it follows that every automorphism $\phi \in \Aut(G)$ may be written
$$\phi=\phi_1\times\phi_2\times\cdots\times\phi_n, \text{ where $\phi_i\in\Aut(G_{p_i})$.}$$
From this it follows that the automorphism classes of $G$ are precisely the sets
$$O_1 \times O_2 \times \dots \times O_n, \text{ where $O_i$ is an automorphism class of $G_{p_i}$},$$
while the characteristic subgroups of $G$ are
$$H_1 \times H_2 \times \dots \times H_n, \text{ where $H_i$ is a characteristic subgroup of $G_{p_i}$}.$$
Using these facts, the problem of determining the automorphism classes and characteristic subgroups of $G$ is completely reduced to the case in which $G$ is a $p$-group. So for the remainder of this section and the next we will assume $G$ is a $p$-group.

Up to isomorphism, we may write $$G=Z_{p^{\lambda_1}}\times Z_{p^{\lambda_2}}\times \cdots \times Z_{p^{\lambda_n}},$$ where $\lambda_1 \leq \lambda_2 \leq \cdots \leq \lambda_n$. We define $\h \lambda(G)$ to be the tuple $(\lambda_1,\dots,\lambda_n)$ and by convention let $\lambda_0=0$. As we will be working extensively with such tuples of integers, it will be convenient to introduce some notation for dealing with them:

\begin{defn}
Given tuples $\h a=(a_1,\dots,a_n)$ and $\h b=(b_1,\dots,b_n)$ with integer entries, define
\begin{align*}
\h a \leq \h b &\text{ if $a_i \leq b_i$ for all $i\in\{1,\dots,n\}$};\\
\h a \wedge \h b &= (\min\{a_1,b_1\}, \min\{a_2,b_2\}, \dots, \min\{a_n,b_n\});\\
\h a \vee \h b &= (\max\{a_1,b_1\}, \max\{a_2,b_2\}, \dots, \max\{a_n,b_n\}).
\end{align*}
\end{defn}

Define $\Lambda(G)$ to be the set of tuples $$\Lambda(G)=\{\h a : \h 0 \leq \h a \leq \h \lambda(G)\}.$$ It is evident that $\Lambda(G)$, under the partial order $\leq$, forms a finite lattice in which $\wedge$ and $\vee$ are the greatest lower bound and least upper bound operators respectively. For $i\in\{1,\dots,n\}$, we define $\h e_i\in\Lambda(G)$ to be the tuple with zeros in each coordinate except with a 1 in the $i$th component.

Given a tuple $\h a \in \Lambda(G)$, we define $T(\h a)$ to be the set of elements $g\in G$ for which the $i$th component of $g$ has order $p^{a_i}$:
$$T(\h a)=\{(g_1,g_2,\dots,g_n) \in G : |g_i|=p^{a_i}\text{ for all $i=1,\dots,n$}\}.$$
Note that the sets $T(\h a)$ partition the group $G$. If $g \in T(\h a)$, we say that $T(\h a)$ is the \emph{type} of $g$.

\begin{lemma}\label{lemtype}
If $g,h \in G$ have the same type $T(\h a)$, then $g$ and $h$ are automorphic.
\end{lemma}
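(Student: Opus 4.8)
The plan is to exhibit, for each type $T(\h a)$, a single canonical representative $g^*$ and to show that every $g\in T(\h a)$ is carried to $g^*$ by some automorphism of $G$. Since $\Aut(G)$ is a group, ``being automorphic'' is an equivalence relation, so this immediately yields that any two elements $g,h$ of the same type are automorphic (via $\psi^{-1}\phi$ if $\phi(g)=g^*=\psi(h)$). For the canonical element I would take $g^*=(g^*_1,\dots,g^*_n)$ with $g^*_i=p^{\lambda_i-a_i}$ when $a_i>0$ and $g^*_i=0$ when $a_i=0$; one checks that $g^*_i$ has order $p^{a_i}$, so indeed $g^*\in T(\h a)$.

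The only automorphisms I expect to need are the ``diagonal'' ones. Recall that $\Aut(Z_{p^{\lambda_i}})$ consists of the multiplication maps $x\mapsto ux$ for $u$ a unit modulo $p^{\lambda_i}$, and that any product $\phi_1\times\cdots\times\phi_n$ with $\phi_i\in\Aut(Z_{p^{\lambda_i}})$ is an automorphism of the direct product $G$ (this uses only that $G$ is the direct product of the cyclic factors, not that those factors are characteristic). The key elementary observation is that an element $c\in Z_{p^{\lambda_i}}$ has order $p^{a_i}$ precisely when $c=p^{\lambda_i-a_i}u_i$ for some $u_i$ coprime to $p$; such a $u_i$ is automatically a unit modulo $p^{\lambda_i}$, and multiplication by its inverse $u_i^{-1}$ sends $c$ to $p^{\lambda_i-a_i}=g^*_i$.

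Carrying this out, given $g=(g_1,\dots,g_n)\in T(\h a)$, for each $i$ with $a_i>0$ I would write $g_i=p^{\lambda_i-a_i}u_i$ with $p\nmid u_i$ and let $\phi_i$ be multiplication by $u_i^{-1}$ modulo $p^{\lambda_i}$; for each $i$ with $a_i=0$ (so $g_i=0$) take $\phi_i$ to be the identity. Then $\phi=\phi_1\times\cdots\times\phi_n\in\Aut(G)$ satisfies $\phi(g)=g^*$, and the same construction applied to $h$ gives the matching $\psi$.

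I do not anticipate a serious obstacle: the whole statement reduces to the single fact that every element of order $p^{a_i}$ in a cyclic $p$-group is a unit multiple of $p^{\lambda_i-a_i}$, together with the availability of diagonal automorphisms. The only points demanding care are the bookkeeping of orders coordinate-by-coordinate and the degenerate coordinates where $a_i=0$. It is perhaps worth emphasizing that off-diagonal automorphisms (those mixing distinct cyclic factors) are not required for this direction, even though they will be essential elsewhere.
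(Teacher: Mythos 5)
Your proposal is correct and is essentially the paper's proof: both reduce the problem to the cyclic factors and use a diagonal automorphism $\phi_1\times\cdots\times\phi_n$, the paper citing as well known the fact (which you prove explicitly via unit multiplication) that elements of equal order in a cyclic group are automorphic. Routing through the canonical representative $g^*$ and composing $\psi^{-1}\phi$ is only a cosmetic variation on mapping $g$ to $h$ directly.
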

\begin{proof}
Write $g=(g_1,\dots,g_n)$ and $h=(h_1,\dots,h_n)$. Since $g$ and $h$ have the same type, we have $|g_i|=|h_i|$ for each $i\in\{1,\dots,n\}$.
It is well known that in a finite cyclic group if two elements have the same order then they are automorphic. So there are automorphisms $\phi_i \in \Aut(Z_{p^{\lambda_i}})$ with $\phi_i(g_i)=h_i$. Then $\phi=\phi_1\times\phi_2\times\cdots\times\phi_n$ is an automorphism of $G$ with $\phi(g)=h$.
\end{proof}

Lemma \ref{lemtype} says that each automorphism class of $G$ is a union of types. From this it follows that given two types $T(\h a)$ and $T(\h b)$, if some element of $T(\h a)$ is automorphic to some element of $T(\h b)$, then all elements of $T(\h a)$ are automorphic to all elements of $T(\h b)$, and we will say in this case that $T(\h a)$ and $T(\h b)$ are \emph{automorphic}.

\begin{defn}
Given a type $T(\h a)$, the automorphism class of $G$ containing $T(\h a)$ is denoted $O(\h a)$.
\end{defn}

\begin{defn}\label{defcan}
A type $T(\h a)$ is \emph{canonical} if
\begin{enumerate}
\item[(I)] $a_i \geq a_{i-1}$ for all $i\in\{2,\dots,n\}$, and
\item[(II)] $a_{i+1}-a_i \leq \lambda_{i+1}-\lambda_i$  for all $i\in\{1,\dots,n-1\}$.
\end{enumerate}
If (I) fails for a given $i$, we will say that $T(\h a)$ is \emph{type (I) noncanonical at} coordinate $i$, and similarly if (II) fails.
\end{defn}

Thus a type $T(\h a)$ is canonical if and only if $a_1,\dots,a_n$ is a (weakly) increasing sequence but at each step it increases by ``not too much", namely, by no more than the difference between the corresponding $\lambda$'s. In this case we will also say that the tuple $\h a$ itself is canonical. The set of canonical tuples will be denoted $\m C(G)$.

For what follows, it will be helpful to introduce some additional notation.
Let $t_1, t_2, \dots, t_n$ be generators for the respective cyclic factors in $G=Z_{p^{\lambda_1}}\times Z_{p^{\lambda_2}}\times \cdots \times Z_{p^{\lambda_n}}$. Also, for $0 \leq a \leq \lambda_i$, define $t_{i,a}={t_i}^{p^{\lambda_i-a}}$, so that $t_{i,a}$ is an element of order $p^a$ in $\langle t_i \rangle$, with $t_{i,\lambda_i}=t_i$.

The definition of ``canonical" is justified by the following theorem.

\begin{thm}\label{unican}
Every type is automorphic to a unique canonical type. Moreover, every canonical type is the maximum type in its automorphism class, i.e., if a type $T(\h a)$ is canonical then $\h a$ is the maximum element of $\{\h b : T(\h b) \subseteq O(\h a)\}$.
\end{thm}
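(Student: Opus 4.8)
The plan is to prove the existence of a canonical representative, then the maximality assertion, and to observe that maximality together with existence forces uniqueness. First I would show that every type can be raised to a canonical one by elementary shears. Suppose $T(\h a)$ is not canonical. If it is type (I) noncanonical at $i$, so $a_i < a_{i-1}$, I would apply the automorphism $\phi$ that fixes every generator except $t_{i-1}\mapsto t_{i-1}+p^{\lambda_i-\lambda_{i-1}}t_i$ (a legitimate automorphism since $\lambda_{i-1}\le\lambda_i$, being a transvection). This changes only the $i$th component of $g\in T(\h a)$, replacing $g_i$ by $g_i+\phi_{i,i-1}(g_{i-1})$, where the added term has order $p^{a_{i-1}}$; since $a_i<a_{i-1}$ the new component has order $p^{a_{i-1}}$, so $T(\h a)$ is automorphic to a type $\h a'$ with $a_i'=a_{i-1}$ and all other entries unchanged, whence $\h a'>\h a$. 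Symmetrically, if $T(\h a)$ is type (II) noncanonical at $i$, so $a_{i+1}-a_i>\lambda_{i+1}-\lambda_i$, a shear $t_{i+1}\mapsto t_{i+1}+c\,t_i$ into the smaller factor raises $a_i$; this is possible because the violation forces $a_{i+1}>a_i$ and $a_i<\lambda_i$, so the image of $g_{i+1}$ can be given order exceeding $p^{a_i}$. Each such move strictly increases $\sum_i a_i$, which is bounded above by $\sum_i\lambda_i$, so iterating terminates at a type that admits no further move, i.e.\ a canonical type automorphic to the original.

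The heart of the argument is the maximality claim: if $T(\h a)$ is canonical then $\h b\le\h a$ for every $\h b$ with $T(\h b)\subseteq O(\h a)$. I would write an arbitrary $\phi\in\Aut(G)$ as a block matrix of homomorphisms $\phi_{ij}\colon Z_{p^{\lambda_j}}\to Z_{p^{\lambda_i}}$, so that for $g=(g_1,\dots,g_n)\in T(\h a)$ the $i$th component of $\phi(g)$ is $\sum_j \phi_{ij}(g_j)$, and bound each summand's order by $p^{a_i}$. For $j\le i$, since a homomorphism cannot increase order, the order of $\phi_{ij}(g_j)$ is at most $p^{a_j}\le p^{a_i}$, the last inequality being exactly canonicity (I). For $j>i$, the key point is that $g_j$ has order $p^{a_j}$ and is therefore divisible by $p^{\lambda_j-a_j}$, so $\phi_{ij}(g_j)$ is divisible by $p^{\lambda_j-a_j}$ in $Z_{p^{\lambda_i}}$ and has order at most $p^{\max\{0,\,\lambda_i-\lambda_j+a_j\}}$; canonicity (II), rewritten as $\lambda_j-a_j\ge\lambda_i-a_i$, makes this exponent at most $a_i$. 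Hence every summand, and so the $i$th component of $\phi(g)$, has order dividing $p^{a_i}$, giving $b_i\le a_i$ for all $i$. Since $\h a$ is itself realized (by $\phi=\mathrm{id}$), it is the maximum of its class. Uniqueness is then immediate: two automorphic canonical types are each the maximum of the common class, hence equal, and combined with the raising argument this shows every type is automorphic to exactly one canonical type.

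The step I expect to be delicate is the per-summand estimate for $j>i$, together with recognizing precisely how the two canonicity conditions enter. The tempting bound ``order of $\phi_{ij}(g_j)$ is at most $p^{\min\{a_j,\lambda_i\}}$'' is too weak and would falsely suggest that a large cyclic factor could inflate a smaller coordinate beyond $a_i$; the correct observation is that the order of $g_j$ forces it to be \emph{deeply divisible}, by $p^{\lambda_j-a_j}$, and that this depth is monotone in $j$ exactly by condition (II), while condition (I) handles the indices $j\le i$ through the trivial order bound. I therefore expect the proof to reduce to these two clean order computations, plus the routine verification that the transvections in the raising step are genuine automorphisms and alter a single component in the stated way; everything else is bookkeeping on the partial order $\le$.
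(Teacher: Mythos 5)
Your proof is correct, but it takes a genuinely different logical route from the paper's. Both arguments establish existence the same way, by raising a noncanonical type with explicit shear automorphisms (your transvections are the paper's Lemma \ref{ncanup3}, except you jump a violating coordinate all the way to its bound rather than by one step, and you terminate via the strictly increasing sum $\sum_i a_i$ rather than the ascending chain condition). The difference is in how maximality and uniqueness are ordered. The paper proves \emph{uniqueness} directly: for distinct canonical $\h a, \h a'$ differing first at coordinate $i$, it compares $g^{p^{a_i}}$ and $(g')^{p^{a_i}}$ and shows one has a $p^{\lambda_i}$th root in $G$ while the other does not --- an automorphism-invariant distinction --- and then deduces maximality from uniqueness plus the raising argument. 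You instead prove \emph{maximality} directly, by decomposing an arbitrary $\phi\in\Aut(G)$ into component homomorphisms $\phi_{ij}\colon Z_{p^{\lambda_j}}\to Z_{p^{\lambda_i}}$ and bounding orders: condition (I) handles $j\le i$ via the trivial bound $|\phi_{ij}(g_j)|\le p^{a_j}\le p^{a_i}$, and condition (II), telescoped to $\lambda_j-a_j\ge\lambda_i-a_i$, handles $j>i$ via the divisibility of $g_j$ by $p^{\lambda_j-a_j}$; uniqueness then falls out for free, since two automorphic canonical types are each the maximum of the common class. (The telescoping step is the same inequality the paper uses inside its uniqueness proof, so nothing new is needed there.) Your route has the virtue of making the two canonicity conditions transparently \emph{equivalent to} the statement that no automorphism can raise any coordinate, with no cleverly chosen invariant required; the paper's route avoids the block-matrix decomposition of automorphisms and uses a root-existence invariant that is natural in this characteristic-subgroup setting. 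Both are complete and correct.
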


Before proving this, we will need the following lemma.

\begin{lemma}\label{ncanup3}
Let $T(\h a)$ be noncanonical at coordinate $i$. Then $T(\h a+\h e_i)$ is automorphic to $T(\h a)$.
\end{lemma}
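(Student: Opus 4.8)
The plan is to prove the lemma by exhibiting, in each of the two ways a type can fail to be canonical at $i$, a single automorphism $\phi \in \Aut(G)$ carrying one element of $T(\h a)$ into $T(\h a + \h e_i)$; by the remark following Lemma \ref{lemtype}, producing a single automorphic pair forces $T(\h a)$ and $T(\h a + \h e_i)$ to be automorphic. First I would check that $\h a + \h e_i$ is a legitimate tuple, i.e. that $a_i < \lambda_i$. If $T(\h a)$ is type (I) noncanonical at $i$ then $a_i < a_{i-1} \leq \lambda_{i-1} \leq \lambda_i$, while if it is type (II) noncanonical at $i$ then $a_i < a_{i+1} - \lambda_{i+1} + \lambda_i \leq \lambda_i$; either way $a_i < \lambda_i$, so $\h a + \h e_i \in \Lambda(G)$. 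In both cases I take the base element $g = t_{1,a_1}t_{2,a_2}\cdots t_{n,a_n} \in T(\h a)$ and use a \emph{unipotent} automorphism fixing every generator except one neighbour of $t_i$, into which it injects a controlled power of $t_i$; such a map is automatically invertible, hence an automorphism.

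For the type (I) case ($a_i < a_{i-1}$) I would set $\phi(t_{i-1}) = t_{i-1}\,t_i^{\,c}$ and $\phi(t_j) = t_j$ for $j \neq i-1$. Since $\lambda_{i-1} \leq \lambda_i$, the relation $t_{i-1}^{p^{\lambda_{i-1}}} = 1$ forces $p^{\lambda_i - \lambda_{i-1}} \mid c$, and any such $c$ gives a well-defined automorphism. Applying $\phi$ leaves every coordinate of $g$ unchanged except that it multiplies the $i$th coordinate by the $i$th part of $\phi(t_{i-1,a_{i-1}})$, which as $c$ varies ranges over the subgroup $\langle t_i^{\,p^{\lambda_i - a_{i-1}}}\rangle$ of order $p^{a_{i-1}}$. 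Because $a_i + 1 \leq a_{i-1}$, this subgroup contains an element $x$ of order exactly $p^{a_i+1}$; choosing $c$ so as to produce such an $x$ and noting $|g_i| = p^{a_i} < p^{a_i+1} = |x|$, the new $i$th coordinate $g_i x$ has order $p^{a_i+1}$, while the $(i-1)$th coordinate is unchanged. Thus $\phi(g) \in T(\h a + \h e_i)$.

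The type (II) case ($a_{i+1} - a_i > \lambda_{i+1} - \lambda_i$) is symmetric: I would instead set $\phi(t_{i+1}) = t_{i+1}\,t_i^{\,c}$, now with $c$ arbitrary since $\lambda_{i+1} \geq \lambda_i$, and fix the other generators. The same computation shows $\phi$ multiplies the $i$th coordinate by an element of $\langle t_i^{\,p^{\lambda_{i+1}-a_{i+1}}}\rangle$, which has order $p^{\lambda_i - \lambda_{i+1} + a_{i+1}}$; the noncanonical inequality rearranges to exactly $a_i + 1 \leq \lambda_i - \lambda_{i+1} + a_{i+1}$, precisely the condition for this subgroup to contain an element of order $p^{a_i+1}$. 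The main (indeed only) obstacle is the bookkeeping with $p$-adic valuations: one must confirm that injecting into slot $i$ an element whose order strictly exceeds $|g_i|$ raises the order of the $i$th coordinate to $p^{a_i+1}$ while leaving every other coordinate's order fixed. The point worth emphasizing is that in each case the failure of the relevant canonicity inequality is exactly the assertion that the reachable ``leak'' subgroup in slot $i$ is large enough to house an element of order $p^{a_i+1}$.
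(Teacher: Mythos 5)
Your proposal is correct and follows essentially the same route as the paper: in each noncanonical case you build an automorphism fixing all generators except the relevant neighbour of $t_i$ (namely $t_{i-1}$ or $t_{i+1}$), multiply that neighbour by a suitable power of $t_i$, and check that the image of $g=\prod_j t_{j,a_j}$ lands in $T(\h a+\h e_i)$. The only cosmetic difference is that the paper pins down the exact exponent (injecting precisely $t_{i,a_i+1}$) whereas you argue existentially that the available ``leak'' subgroup contains an element of order $p^{a_i+1}$; the computation is the same.
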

\begin{proof}
One element of type $T(\h a)$ is $g=\prod_{j=1}^n t_{j,a_j}$, so it is enough to show that there is an automorphism $\phi$ with $\phi(g) \in T(\h a+\h e_i)$.

First consider the case that $T(\h a)$ is type (I) noncanonical at $i$. So $a_i < a_{i-1}$. Define a homomorphism $\phi: G \to G$ by setting $\phi(t_j)=t_j$ for all $j\neq i-1$ and $\phi(t_{i-1})=t_{i-1}t_{i,s}$ where $s=\lambda_{i-1}-a_{i-1}+a_i+1$. This is well-defined since $|\phi(t_j)|$ divides $|t_j|$ for all $j$, because in fact equality holds: For $j \neq i-1$ this is trivial, while for $j=i-1$ we have $$|\phi(t_{i-1})|=|t_{i-1}t_{i,s}|=
\text{lcm}(|t_{i-1}|,|t_{i,s}|)=
\text{lcm}(p^{\lambda_{i-1}},p^s)=p^{\lambda_{i-1}}=|t_{i-1}|,$$
since $s \leq \lambda_{i-1}$. The image of $\phi$ contains each generator $t_j$ where $j\neq i-1$, and since $\phi(t_{i-1}t_{i,s}^{-1})=t_{i-1}$, the image of $\phi$ also contains $t_{i-1}$. Thus $\phi$ is onto, which, since $G$ is finite, implies $\phi$ is an automorphism. Now, \begin{align*}\phi(t_{i-1,a_{i-1}})&=\phi(t_{i-1}^{p^{\lambda_{i-1}-a_{i-1}}})=
(t_{i-1}t_{i,s})^{p^{\lambda_{i-1}-a_{i-1}}}\\
&=t_{i-1,a_{i-1}}t_{i,s-(\lambda_{i-1}-a_{i-1})}=t_{i-1,a_{i-1}}t_{i,a_i+1}.\end{align*}
Hence $$\phi(g)=t_{i,a_i+1}\prod_{j=1}^n t_{j,a_j}=t_{i,a_i}t_{i,a_i+1}\prod_{j\neq i} t_{j,a_j}.$$ Now, since $|t_{i,a_i}t_{i,a_i+1}|=|t_{i,a_i+1}|=p^{a_i+1}$, it follows that $\phi(g)$ has type $\h a+\h e_i$ as desired.

Now consider the case that $T(\h a)$ is type (II) noncanonical at $i$. So $a_{i+1}-a_i > \lambda_{i+1}-\lambda_i$. In this case, define $\phi$ by $\phi(t_j)=t_j$ for $j\neq i+1$ and $\phi(t_{i+1})=t_{i,s}t_{i+1}$ where $s=\lambda_{i+1}-a_{i+1}+a_i+1$. Again, this is well-defined since
$$|\phi(t_{i+1})|=|t_{i,s}t_{i+1}|=\text{lcm}(|t_{i,s}|,|t_{i+1}|)
=\text{lcm}(p^s,p^{\lambda_{i+1}})=p^{\lambda_{i+1}}=|t_{i+1}|,$$
since $s\leq \lambda_{i+1}$. Since $\phi$ is clearly surjective, it is an automorphism of $G$. We have
\begin{align*}\phi(t_{i+1,a_{i+1}})&=\phi(t_{i+1}^{p^{\lambda_{i+1}-a_{i+1}}})
=(t_{i,s}t_{i+1})^{p^{\lambda_{i+1}-a_{i+1}}}\\
&=t_{i,s-(\lambda_{i+1}-a_{i+1})}t_{i+1,a_{i+1}}
=t_{i,a_i+1}t_{i+1,a_{i+1}}.
\end{align*}
Hence, once more $$\phi(g)=t_{i,a_i+1}\prod_{j=1}^n t_{j,a_j},$$ so that again $\phi(g)$ has type $\h a+\h e_i$, as desired.
\end{proof}

\begin{proof}[Proof of Theorem \ref{unican}]
Let $T(\h a)$ be a type which is non-canonical. Lemma \ref{ncanup3} implies that $T(\h a)$ is automorphic to another type $T(\h a')$ where $\h a' > \h a$. If $\h a'$ is non-canonical, then we may again apply Lemma \ref{ncanup3} to obtain another automorphic type $T(\h a'')$ where $\h a'' > \h a'$. This process may be continued but must eventually terminate since there are no infinite increasing sequences in $\Lambda$. Hence $T(\h a)$ is automorphic to a canonical type $T(\h b)$. Since $\h b \geq \h a$, this also shows that $\h b$ is the maximum type in its automorphism class, assuming the uniqueness of $\h b$ which we now prove.

So let $\h a$ and $\h a'$ be distinct canonical types. We will show that $T(\h a)$ is not automorphic to $T(\h a')$. Let $g=\prod_{j=1}^n t_{j,a_j}$ and $g'=\prod_{j=1}^n t_{j,a_j'}$, so $g$ and $g'$ are elements of type $\h a$ and $\h a'$ respectively. Let $i$ be the least positive integer such that $a_i \neq a_i'$. Without loss of generality, assume $a_i < a_i'$.  Consider the elements $h=g^{p^{a_i}}$ and $h'=(g')^{p^{a_i}}$. Let $\h b$ and $\h b'$ be the types of $h$ and $h'$ respectively. By condition (I) of $\h a$ being canonical, we have $a_j \leq a_i$ for all $j < i$, hence $b_j=0$ for all $j \leq i$, while $b_j'=0$ for all $j < i$ but $b_i' \neq 0$. We have $b_j=a_j-a_i$ for all $j\geq i$. By condition (II) of $\h a$ being canonical, we have $\lambda_j-a_j \geq \lambda_i-a_i$ for all $j > i$, hence $\lambda_j-b_j \geq \lambda_i$ for all $j \geq i$. It follows that $h$ has a $p^{\lambda_i}$th root in $G$ while $h'$ does not, so $h$ and $h'$ are not automorphic. Consequently, $g$ and $g'$ cannot be automorphic, so $T(\h a)$ and $T(\h a')$ are not automorphic.
\end{proof}

We now obtain an important corollary, which was already discovered by Miller \cite[p. 23]{miller} and independently by Baer \cite[Corollary 2]{baer}, and proved again by Birkhoff in \cite[Theorem 9.4]{birkhoff}:

\begin{cor}[Miller-Baer-Birkhoff]
For any prime $p$, the number of automorphism classes of $Z_{p^{\lambda_1}}\times Z_{p^{\lambda_2}}\times \cdots \times Z_{p^{\lambda_n}}$ (where $\lambda_1 \leq \lambda_2 \leq \cdots \leq \lambda_n$) is $$\prod_{i=1}^n(\lambda_i-\lambda_{i-1}+1).$$
\end{cor}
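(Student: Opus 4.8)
The plan is to convert the counting problem into one about canonical tuples using Theorem \ref{unican}. That theorem asserts that every type is automorphic to a unique canonical type, so the automorphism classes of $G$ are in bijection with the canonical types, equivalently with the set $\m C(G)$ of canonical tuples. Hence it suffices to prove $|\m C(G)| = \prod_{i=1}^n(\lambda_i-\lambda_{i-1}+1)$, and the entire corollary reduces to a combinatorial count over $\m C(G)$.

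To carry out that count I would reparametrize a canonical tuple $\h a=(a_1,\dots,a_n)$ by its successive differences, setting $d_1=a_1$ and $d_i=a_i-a_{i-1}$ for $2\le i\le n$. Condition (I) of Definition \ref{defcan} says exactly that $d_i\ge 0$ for $2\le i\le n$, and condition (II) says exactly that $d_i\le \lambda_i-\lambda_{i-1}$ for $2\le i\le n$; so each such $d_i$ ranges over the $\lambda_i-\lambda_{i-1}+1$ integers in $\{0,1,\dots,\lambda_i-\lambda_{i-1}\}$. For the first coordinate, membership in $\Lambda(G)$ forces $0\le a_1\le\lambda_1$, giving $d_1\in\{0,\dots,\lambda_1\}$, which is $\lambda_1-\lambda_0+1$ choices by the convention $\lambda_0=0$.

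The one point genuinely requiring care — and the only real obstacle — is verifying that the remaining membership constraints $0\le a_i\le\lambda_i$ for $2\le i\le n$ impose nothing beyond the independent ranges just described, so that the choices of the $d_i$ truly multiply. I would settle this with a telescoping estimate: since each $d_j\ge 0$ we get $a_i=\sum_{j\le i}d_j\ge 0$, and since each $d_j\le\lambda_j-\lambda_{j-1}$ (with $d_1=a_1\le\lambda_1=\lambda_1-\lambda_0$) we get $a_i=\sum_{j\le i}d_j\le\sum_{j\le i}(\lambda_j-\lambda_{j-1})=\lambda_i$. Thus every admissible difference sequence yields a tuple automatically lying in $\Lambda(G)$, and conversely every canonical tuple produces an admissible difference sequence, so $\h a\mapsto(d_1,\dots,d_n)$ is a bijection from $\m C(G)$ onto $\prod_{i=1}^n\{0,\dots,\lambda_i-\lambda_{i-1}\}$.

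Counting the target product set then gives $|\m C(G)|=(\lambda_1+1)\prod_{i=2}^n(\lambda_i-\lambda_{i-1}+1)=\prod_{i=1}^n(\lambda_i-\lambda_{i-1}+1)$, which is the claimed formula. I expect the whole argument to be short once Theorem \ref{unican} is invoked; the only substantive work is the telescoping verification that decouples the coordinate bounds from the difference bounds.
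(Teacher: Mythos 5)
Your proposal is correct and follows essentially the same route as the paper: invoke Theorem \ref{unican} to identify automorphism classes with canonical tuples, then count those tuples with one independent factor of $\lambda_i-\lambda_{i-1}+1$ per coordinate. Your difference-sequence bijection is just a more explicit packaging of the paper's sequential coordinate count, with the added merit that your telescoping argument makes explicit the point the paper leaves implicit, namely that the bounds $0\le a_i\le\lambda_i$ defining $\Lambda(G)$ follow automatically from the canonicity constraints.
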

\begin{remark}
This count includes the trivial automorphism class (containing only the identity element of the group), in spite of the curious statement to the contrary in \cite[p. 23]{miller}.
\end{remark}
\begin{proof}
Theorem \ref{unican} shows that the automorphism classes of $G$ are in one-to-one correspondence with the canonical tuples of $\Lambda(G)$. The canonical tuples $\h a$ are precisely those which satisfy $a_{i-1} \leq a_i \leq a_{i-1}+\lambda_i-\lambda_{i-1}$ for each $i \in \{1,\dots,n\}$. Thus there are $\lambda_i-\lambda_{i-1}+1$ choices for each coordinate $a_i$, and the result follows.
\end{proof}

\begin{example}
Let $G=Z_2\times Z_8=Z_2\times Z_{2^3}=\langle s\rangle\times\langle
t\rangle$. Then there are $(1-0+1)(3-1+1)=6$ automorphism classes of $G$, namely:
\begin{align*}
O(0,0)&=T(0,0)=\{1\},\\
O(0,1)&=T(0,1)=\{t^4\},\\
O(0,2)&=T(0,2)=\{t^2,t^6\},\\
O(1,1)&=T(1,1)\cup T(1,0)=\{s,st^4\},\\
O(1,2)&=T(1,2)=\{st^2,st^6\},\\
O(1,3)&=T(1,3)\cup T(0,3)=\{t,st,t^3,st^3,t^5,st^5,t^7,st^7\}.
\end{align*}
\end{example}

For information on how the automorphism classes split up as a union of types, see Theorem \ref{osplit} in the next section.

\section{Characteristic Subgroups of Abelian Groups}

We let $\Char(G)$ denote the lattice of characteristic subgroups of $G$.

\begin{defn}
Given an $n$-tuple $\h a \in \Lambda(G)$, we define the subgroup $R(\h a)=\underset{\h b\leq \h a}\cup T(\h b)$ and call $R(\h a)$ the \emph{regular subgroup below} $\h a$.
\end{defn}
\begin{remark}
We use the term ``regular", following Baer \cite{baer}. But this concept of regular should not be confused with the notion of a regular permutation group, nor of a regular $p$-group.
\end{remark}

\begin{thm}\label{canchar}
$R(\h a)$ is a characteristic subgroup if and only if $T(\h a)$ is a canonical type.
\end{thm}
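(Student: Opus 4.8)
The plan is to prove the two implications separately. The ``only if'' direction is short: I would show that if $T(\h a)$ is noncanonical then some automorphism carries an element of $R(\h a)$ outside $R(\h a)$, using Lemma \ref{ncanup3}. The ``if'' direction is the substantial part, and I would handle it by exhibiting $R(\h a)$ explicitly as a lattice combination (intersections and sums) of the obviously characteristic subgroups $p^kG$ and $G[p^k]=\{g\in G: p^kg=0\}$.

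For the ``only if'' direction, suppose $T(\h a)$ is noncanonical at a coordinate $i$. The element $g=\prod_{j=1}^n t_{j,a_j}$ lies in $T(\h a)\subseteq R(\h a)$, and by Lemma \ref{ncanup3} there is an automorphism $\phi$ with $\phi(g)\in T(\h a+\h e_i)$. Because the types partition $G$, an element lies in $R(\h a)=\bigcup_{\h b\leq \h a}T(\h b)$ precisely when its type $\h b$ satisfies $\h b\leq \h a$; since $\h a+\h e_i\not\leq \h a$, we get $\phi(g)\notin R(\h a)$. Hence $R(\h a)$ is not invariant under $\Aut(G)$, so it is not characteristic.

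For the ``if'' direction, I would first record the identification $R(\h a)=\prod_{i=1}^n\langle t_{i,a_i}\rangle$, which gives $R(\h a)\cap R(\h b)=R(\h a\wedge \h b)$ and $R(\h a)+R(\h b)=R(\h a\vee \h b)$ by working one coordinate at a time. Next, $p^kG$ and $G[p^k]$ are characteristic, being the image and kernel of the endomorphism $x\mapsto p^kx$, and in tuple form $p^kG=R(\h v_k)$ and $G[p^k]=R(\h w_k)$, where $(\h v_k)_i=\max(\lambda_i-k,0)$ and $(\h w_k)_i=\min(k,\lambda_i)$. The crux is then the identity, valid for every canonical $\h a$,
\[
\h a=\bigwedge_{j=1}^n\bigl(\h w_{a_j}\vee \h v_{\lambda_j-a_j}\bigr),
\]
which translates under the correspondence above into $R(\h a)=\bigcap_{j=1}^n\bigl(G[p^{a_j}]+p^{\lambda_j-a_j}G\bigr)$, a characteristic subgroup.

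The main work, and the only place the hypothesis is used, is verifying this identity. I would set $\h u^{(j)}=\h w_{a_j}\vee \h v_{\lambda_j-a_j}$ and check that $\h u^{(j)}\geq \h a$ with equality in coordinate $j$; the meet over $j$ then collapses to $\h a$. Coordinatewise, for $i\leq j$ the estimate $(\h w_{a_j})_i=\min(a_j,\lambda_i)\geq a_i$ rests on condition (I) (that is, $a_i\leq a_j$), while for $i\geq j$ the estimate $(\h v_{\lambda_j-a_j})_i=\max(\lambda_i-\lambda_j+a_j,0)\geq a_i$ rests on condition (II) rewritten as the statement that $\lambda_i-a_i$ is nondecreasing; equality at $i=j$ is immediate. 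I expect this bookkeeping—tracking which of the two canonicity conditions governs which range of coordinates—to be the only delicate point, everything else being the routine lattice computation noted above.
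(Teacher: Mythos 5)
Your proof is correct, and while your ``only if'' direction coincides with the paper's (both invoke Lemma \ref{ncanup3} to move an element of $T(\h a)$ into the type $T(\h a+\h e_i)$, which is disjoint from $R(\h a)$ since $\h a+\h e_i\not\leq\h a$), your ``if'' direction takes a genuinely different route. The paper argues combinatorially inside the orbit machinery: supposing some type $T(\h b)\subseteq R(\h a)$ has $O(\h b)\not\subseteq R(\h a)$, it takes $\h b$ maximal, rules out $\h b$ canonical using Theorem \ref{unican}, and then uses Lemma \ref{ncanup3} together with the canonicity of $\h a$ to produce $\h b'>\h b$ with $O(\h b')=O(\h b)$ and $\h b'\leq\h a$, contradicting maximality. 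You instead exhibit $R(\h a)=\bigcap_{j=1}^n\bigl(G[p^{a_j}]+p^{\lambda_j-a_j}G\bigr)$, and your coordinatewise verification of the underlying tuple identity is sound: condition (I) gives $\min(a_j,\lambda_i)\geq a_i$ for $i\leq j$, condition (II) (equivalently, that $\lambda_i-a_i$ is nondecreasing) gives $\max(\lambda_i-\lambda_j+a_j,0)\geq a_i$ for $i\geq j$, and equality at $i=j$ collapses the meet to $\h a$. Your route buys two things: it is independent of Theorem \ref{unican} and of Lemma \ref{ncanup3}, needing only the elementary lattice identities that the paper records separately as Theorem \ref{Rlat} (which is proved by direct calculation, so there is no circularity in using it); and it proves strictly more, since $G[p^k]$ and $p^kG$ are the kernel and image of the endomorphism $x\mapsto p^kx$, so your expression shows $R(\h a)$ is fully invariant under all endomorphisms, not merely characteristic. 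What the paper's argument buys in exchange is that its maximality technique analyzes directly which types constitute each orbit, a style of argument the paper reuses later (e.g., in Theorems \ref{charirreg} and \ref{osplit}), whereas your algebraic identity is special to the regular subgroups with canonical tuples.
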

\begin{proof}
Suppose first that $\h a$ is noncanonical. Then by Lemma \ref{ncanup3}, there is another tuple $\h a' > \h a$ with $O(\h a')=O(\h a)$. Then $R(\h a)$ contains $T(\h a)$ but not $T(\h a')$; this means that $R(\h a)$ contains some but not all of the automorphism class $O(\h a)$, so $R(\h a)$ is not characteristic.

Now assume $\h a$ is canonical. We need to show that $R(\h a)$ is a union of automorphism classes. Suppose by way of contradiction that there is a type $T(\h b)$ with $T(\h b) \subseteq R(\h a)$ but not $O(\h b) \subseteq R(\h a)$. Take $\h b$ to be a maximal such tuple. If $\h b$ is canonical, then for every type $T(\h c)$ contained in $O(\h b)$, we have $\h c \leq \h b$ since $\h b$ is the maximum type of its automorphism class by Theorem \ref{unican}. Hence $\h c \leq \h a$, so $T(\h c) \subseteq R(\h a)$. This implies $O(\h b) \subseteq R(\h a)$, contrary to assumption. So $\h b$ must be noncanonical. So there is some $i \in \{1,\dots,n-1\}$ such that either $b_{i+1} < b_i$ or $b_{i+1}-b_i > \lambda_{i+1}-\lambda_i$. In the first case, define $\h b'$ by $b_j'=b_j$ for $j\neq i+1$ and $b_{i+1}'=b_i$. By Lemma \ref{ncanup3}, $T(\h b)$ and $T(\h b')$ are automorphic types, i.e. $O(\h b)=O(\h b')$. Since $\h a$ is canonical, we have $a_i \leq a_{i+1}$, hence $b_{i+1}'=b_i \leq a_i \leq a_{i+1}$, so that $\h b' \leq \h a$. Then $T(\h b') \subseteq R(\h b)$ but not $O(\h b') \subseteq R(\h a)$. Since $\h b'>\h b$, this contradicts the maximality of $\h b$.

In the second case, i.e., if $b_{i+1}-b_i > \lambda_{i+1}-\lambda_i$, define $\h b'$ by $b_j'=b_j$ for $j\neq i$ and $b_i'=b_{i+1}-(\lambda_{i+1}-\lambda_i)$. Again by Lemma \ref{ncanup3}, $T(\h b)$ and $T(\h b')$ are automorphic types. Since $\h a$ is canonical, we have $a_{i+1}- (\lambda_{i+1}-\lambda_i) \leq a_i$. Hence $b_i' = b_{i+1}-(\lambda_{i+1}-\lambda_i) \leq a_{i+1}-(\lambda_{i+1}-\lambda_i) \leq a_i$, so $\h b' \leq \h a$. Then, as in the previous case, $T(\h b') \subseteq R(\h b)$ but not $O(\h b') \subseteq R(\h a)$, which contradicts the maximality of $\h b$, since $\h b'>\h b$,
\end{proof}

The following is easily verified by direct calculation:
\begin{thm}\label{Rlat}For any $\h a, \h b \in \Lambda(G)$,
\begin{enumerate}
\item[(i)] $R(\h a) \cap R(\h b) = R(\h a \wedge \h b)$;
\item[(ii)] $\langle R(\h a), R(\h b) \rangle = R(\h a \vee \h b)$;
\item[(iii)] $|R(\h a)|=p^{\sum_{i=1}^n a_i}$.
\end{enumerate}
\end{thm}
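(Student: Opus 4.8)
The plan is to first record an explicit, coordinatewise description of $R(\h a)$, from which all three statements fall out essentially by inspection. Since the types $T(\h c)$ partition $G$, each element $g=(g_1,\dots,g_n)$ lies in exactly one type, namely the one recording $|g_i|=p^{c_i}$ for each $i$. Hence $g\in R(\h a)=\bigcup_{\h b\le\h a}T(\h b)$ precisely when its type $\h c$ satisfies $\h c\le\h a$, that is, when $|g_i|$ divides $p^{a_i}$ for every $i$. Because $Z_{p^{\lambda_i}}$ is cyclic and $a_i\le\lambda_i$, the elements of order dividing $p^{a_i}$ in the $i$th factor constitute exactly the unique subgroup $\langle t_{i,a_i}\rangle$ of order $p^{a_i}$. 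Therefore
$$R(\h a)=\langle t_{1,a_1}\rangle\times\langle t_{2,a_2}\rangle\times\cdots\times\langle t_{n,a_n}\rangle,$$
which exhibits $R(\h a)$ as an (internal) direct product of cyclic subgroups and in particular confirms that it is a subgroup. Statement (iii) is then immediate, since $|R(\h a)|=\prod_{i=1}^n|\langle t_{i,a_i}\rangle|=\prod_{i=1}^n p^{a_i}=p^{\sum_{i=1}^n a_i}$.

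For (i) I would intersect coordinatewise. An element lies in $R(\h a)\cap R(\h b)$ iff in each coordinate its order divides both $p^{a_i}$ and $p^{b_i}$, hence divides $p^{\min\{a_i,b_i\}}$; thus the $i$th factor of the intersection is $\langle t_{i,a_i}\rangle\cap\langle t_{i,b_i}\rangle=\langle t_{i,\min\{a_i,b_i\}}\rangle$, using that the subgroups of a cyclic $p$-group are linearly ordered by inclusion. Reassembling the coordinates gives exactly $R(\h a\wedge\h b)$. For (ii), since $G$ is abelian the join $\langle R(\h a),R(\h b)\rangle$ coincides with the internal product $R(\h a)R(\h b)$, and this product decomposes coordinatewise within the direct product $G$. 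In the $i$th factor the nested subgroups $\langle t_{i,a_i}\rangle$ and $\langle t_{i,b_i}\rangle$ generate the larger of the two, namely $\langle t_{i,\max\{a_i,b_i\}}\rangle$, so reassembling yields $R(\h a\vee\h b)$.

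I do not expect a genuine obstacle, which is consistent with the paper's remark that the statement is verified by direct calculation. The only points that must be stated carefully are that the sets $T(\h c)$ partition $G$ (so that membership in $R(\h a)$ is governed solely by the coordinatewise order condition $\h c\le\h a$) and that the subgroup lattice of each cyclic factor $Z_{p^{\lambda_i}}$ is a chain, which is precisely what makes $\cap$ and $\langle\,\cdot\,,\cdot\,\rangle$ behave as $\min$ and $\max$ in each coordinate and hence as $\wedge$ and $\vee$ on the tuples.
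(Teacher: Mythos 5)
Your proof is correct and is precisely the ``direct calculation'' the paper alludes to (the paper itself gives no written proof): the key observation that $R(\h a)=\langle t_{1,a_1}\rangle\times\cdots\times\langle t_{n,a_n}\rangle$, together with the chain structure of the subgroup lattice of each cyclic factor, is the intended argument, and your coordinatewise verification of (i)--(iii) from it is sound.
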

From (i) and (ii) and the fact that the meet and join of characteristic subgroups is characteristic, it follows that the regular characteristic subgroups form a sublattice of $\Char(G)$. Using Theorem \ref{canchar}, this then implies that if $\h a$ and $\h b$ are canonical tuples then so are $\h a \wedge \h b$ and $\h a \vee \h b$. (This is also not difficult to verify directly.)

The following theorem shows that irregular characteristic subgroups can only exist in the case $p=2$.
\begin{thm}[Miller-Baer]\label{charreg}
Let $G$ be an abelian $p$-group where $p \neq 2$. Then every characteristic subgroup of $G$ is regular.
\end{thm}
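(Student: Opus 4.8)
The plan is to show that an arbitrary characteristic subgroup $H$ coincides with $R(\h a)$, where $\h a$ is the join of all type-tuples occurring in $H$. Since $H$ is characteristic it is a union of automorphism classes, and by Lemma \ref{lemtype} each automorphism class is a union of types; hence $H=\bigcup_{\h b\in B}T(\h b)$, where $B=\{\h b\in\Lambda(G):T(\h b)\subseteq H\}$. Note $\h 0\in B$ since $T(\h 0)=\{1\}\subseteq H$, so $B$ is nonempty, and I set $\h a=\bigvee_{\h b\in B}\h b\in\Lambda(G)$. Every element of $H$ lies in some $T(\h b)$ with $\h b\leq\h a$, so $H\subseteq R(\h a)$ automatically; the entire content is the reverse inclusion, and this is exactly where the hypothesis $p\neq 2$ must enter.

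The crux is a generation lemma: for every $\h b\in\Lambda(G)$ one has $\langle T(\h b)\rangle=R(\h b)$. The inclusion $\langle T(\h b)\rangle\subseteq R(\h b)$ is immediate from the definition of $R(\h b)$. For the reverse inclusion it suffices to produce, for each coordinate $i$ with $b_i\geq 1$, an element of $\langle T(\h b)\rangle$ supported on the $i$th factor and of order $p^{b_i}$, since such elements generate $\prod_i\langle t_{i,b_i}\rangle=R(\h b)$. To do this I fix $g=\prod_j t_{j,b_j}\in T(\h b)$ and let $g'$ be obtained from $g$ by replacing its $i$th component $t_{i,b_i}$ with $t_{i,b_i}^2$. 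Because $p\neq 2$, we have $|t_{i,b_i}^2|=p^{b_i}/\gcd(2,p^{b_i})=p^{b_i}$, so $g'\in T(\h b)$ as well. Then $g(g')^{-1}=t_{i,b_i}^{-1}$ is supported on the $i$th factor with order $p^{b_i}$, and lies in $\langle T(\h b)\rangle$ since both $g,g'\in T(\h b)$. This is the main obstacle, and the sole place the parity hypothesis is needed: for $p=2$ the quotient of any two elements of order $2^{b_i}$ has order strictly less than $2^{b_i}$, so no coordinate-isolating element exists, the generation lemma fails, and (as the $p=2$ examples show) irregular characteristic subgroups genuinely arise.

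Finally I would assemble the pieces. By the generation lemma, $R(\h b)=\langle T(\h b)\rangle\subseteq H$ for every $\h b\in B$, so $H\supseteq\langle R(\h b):\h b\in B\rangle$. Repeatedly applying Theorem \ref{Rlat}(ii) gives $\langle R(\h b):\h b\in B\rangle=R\bigl(\bigvee_{\h b\in B}\h b\bigr)=R(\h a)$, whence $H\supseteq R(\h a)$. Combined with $H\subseteq R(\h a)$ from the setup, this yields $H=R(\h a)$, so $H$ is regular (the degenerate case $B=\{\h 0\}$ gives the trivial subgroup $R(\h 0)$). Everything outside the generation lemma is formal manipulation of the lattice $\Lambda(G)$ together with the cited structural results, so the whole theorem rests on the single odd-prime computation above.
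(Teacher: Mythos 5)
Your proof is correct and takes essentially the same approach as the paper's: both arguments sandwich $H$ between regular subgroups and recover the coordinate generators inside $H$ by combining two elements of the same type, with $p\neq 2$ invoked at exactly that step to capture the full cyclic factor. The only difference is packaging — you isolate $t_{i,b_i}^{-1}$ via a squared-component partner and a per-type generation lemma $\langle T(\h b)\rangle = R(\h b)$, then join over $B$ using Theorem \ref{Rlat}(ii), whereas the paper works directly with the maximal tuple $\h m$ and obtains $t_{i,m_i}^2\in H$ from a single witness type per coordinate.
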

\begin{remark}
This theorem was shown by Baer in \cite[Theorem 9]{baer}. It was known to Miller although it is questionable whether his footnote in \cite[p. 21]{miller} constitutes a complete proof. A related result of Birkhoff is found in \cite[Theorem 10.1]{birkhoff}.
\end{remark}
\begin{proof}
Let $H$ be any characteristic subgroup of $G$. Define the $n$-tuple $\h m$ by $m_i=\max\{a_i : \h a \in \Lambda(G), T(\h a) \subseteq H\}$. It is clear then that $H \leq R(\h m)$. We will show that on the other hand $R(\h m) \leq H$, from which the result immediately follows.

For any $i$, by our definition of $\h m$ there is a type $\h a$ such that $T(\h a) \subseteq H$ and $a_i=m_i$. Then $g=\prod_{j=1}^n t_{j,a_j}$ and $g'=t_{i,m_i}\prod_{j\neq i} t_{j,a_j}^{-1}$ are two elements of $T(\h a)$. Since $H$ is a subgroup, $gg' = t_{i,m_i}^2 \in H$. Since $p \neq 2$, we have $\langle t_{i,m_i}^2 \rangle = \langle t_{i,m_i} \rangle$, so $t_{i,m_i} \in H$. Since the elements $t_{i,m_i}$ generate $R(\h m)$, it follows that $R(\h m) \leq H$, as desired.
\end{proof}

\begin{cor}\label{cordist}
Let $G$ be an abelian $p$-group where $p \neq 2$. Then the lattice $\Char(G)$ is isomorphic to the lattice $\m C(G)$. In particular, $\Char(G)$ is a distributive lattice.
\end{cor}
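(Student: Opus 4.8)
The plan is to write down an explicit candidate isomorphism and verify it is a lattice isomorphism using the machinery already assembled, then read off distributivity. Define the map $\Phi \colon \m C(G) \to \Char(G)$ by $\Phi(\h a) = R(\h a)$. By Theorem \ref{canchar}, $R(\h a)$ is characteristic exactly when $\h a$ is canonical, so $\Phi$ is well-defined as a map into $\Char(G)$; the content of the corollary is that it is a bijection respecting meets and joins.

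First I would establish that $\Phi$ is a bijection. For injectivity, I would note that a canonical tuple is recoverable from its regular subgroup: since every type $T(\h c) \subseteq R(\h a)$ satisfies $\h c \leq \h a$ by definition of $R(\h a)$, while $T(\h a) \subseteq R(\h a)$ itself, one has $a_i = \max\{c_i : T(\h c) \subseteq R(\h a)\}$ for each $i$; hence $R(\h a) = R(\h b)$ with $\h a,\h b$ canonical forces $\h a = \h b$. For surjectivity I would invoke Theorem \ref{charreg}: because $p \neq 2$, any characteristic subgroup $H$ equals $R(\h m)$ for the tuple $\h m$ built in that proof, and since this $R(\h m)$ is characteristic, Theorem \ref{canchar} forces $\h m$ to be canonical. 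Thus every element of $\Char(G)$ is $\Phi(\h m)$ for some $\h m \in \m C(G)$.

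Next I would check that $\Phi$ preserves the lattice operations. In $\Char(G)$ the meet is intersection and the join is the subgroup generated by the two subgroups, so Theorem \ref{Rlat}(i),(ii) give $\Phi(\h a) \wedge \Phi(\h b) = R(\h a) \cap R(\h b) = R(\h a \wedge \h b)$ and $\Phi(\h a) \vee \Phi(\h b) = \langle R(\h a), R(\h b)\rangle = R(\h a \vee \h b)$. Since $\h a \wedge \h b$ and $\h a \vee \h b$ are themselves canonical (as already observed following Theorem \ref{Rlat}), the right-hand sides are precisely $\Phi(\h a \wedge \h b)$ and $\Phi(\h a \vee \h b)$. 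Hence $\Phi$ is a bijective lattice homomorphism, i.e.\ a lattice isomorphism $\m C(G) \cong \Char(G)$.

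For the final sentence, distributivity, I would argue that $\m C(G)$ is a sublattice of $\Lambda(G)$: it is closed under $\wedge$ and $\vee$, and these coincide with the componentwise min and max inherited from $\Lambda(G)$. But $\Lambda(G)$ is the direct product of the finite chains $\{0,1,\dots,\lambda_i\}$ under componentwise min and max, and a product of chains is distributive; a sublattice of a distributive lattice is again distributive, so $\m C(G)$ is distributive, and transporting along $\Phi$ makes $\Char(G)$ distributive. The step demanding the most care is the bijection itself, specifically the interaction of Theorems \ref{charreg} and \ref{canchar}: one must confirm that the maximal tuple attached to an arbitrary characteristic subgroup is genuinely canonical and that distinct canonical tuples yield distinct regular subgroups, since everything downstream rests on $\Phi$ being a well-defined bijection.
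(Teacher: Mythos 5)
Your proposal is correct and takes essentially the same route as the paper's proof: the paper likewise obtains the isomorphism $\m C(G)\cong\Char(G)$ via $\h a\mapsto R(\h a)$ from Theorem \ref{Rlat}(i,ii) together with Theorem \ref{charreg}, and deduces distributivity from the fact that $\m C(G)$ is a sublattice of $\Lambda(G)$, a product of chains. The only difference is one of detail: you explicitly verify the bijectivity (injectivity by recovering the tuple from $R(\h a)$, surjectivity by combining Theorems \ref{charreg} and \ref{canchar}), which the paper dismisses as immediate.
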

\begin{proof}
The first statement follows immediately from Theorem \ref{Rlat}(i,ii) and Theorem \ref{charreg}. The second statement holds since $\m C(G)$ is a sublattice of the lattice $\Lambda(G)$, which is distributive since it is a direct product of chains.
\end{proof}

Corollary \ref{cordist} enables us to give an explicit description of the lattice of characteristic subgroups of any abelian $p$-group of odd order. For example, $\Char(Z_p \times Z_{p^3})$, for an odd prime $p$, is shown in Table \ref{char13}.

\begin{table}
\centering
\caption{Characteristic subgroups of $G=Z_p\times Z_{p^3}$ for odd prime $p$}\label{char13}

\begin{tabular}{lr}
\begin{tabular}{|l|l|} \hline
$H_1$&R(0,0)\\ \hline
$H_2$&R(0,1)\\ \hline
$H_3$&R(0,2)\\ \hline
$H_4$&R(1,1)\\ \hline
$H_5$&R(1,2)\\ \hline
$H_6$&R(1,3)\\ \hline
\end{tabular}
&
\begin{tabular}{r}
\psset{xunit=.2cm,yunit=.2cm,labelsep=2.5mm}
\begin{pspicture*}(-10,-10)(10,22)
\Rput[t](0,0){$H_1$}

\Rput[t](0,5){$H_2$}
\psline(0,-.5)(0,1.5)

\Rput[t](-5,10){$H_3$}
\Rput[t](5,10){$H_4$}
\psline(-1.5,4.5)(-3.5,6.5)
\psline(1.5,4.5)(3.5,6.5)

\Rput[t](0,15){$H_5$}
\psline(-3.5,9.5)(-1.5,11.5)
\psline(3.5,9.5)(1.5,11.5)

\Rput[t](0,20){$H_6$}
\psline(0,14.5)(0,16.5)
\end{pspicture*}
\end{tabular}
\end{tabular}
\end{table}

Now we consider the case $p=2$. Given any characteristic subgroup $H$ of $G$, as in the proof of Theorem \ref{charreg} we can define the $n$-tuple $\h m$ by $m_i=\max\{a_i : \h a \in \Lambda(G), T(\h a) \subseteq H\}$. We say then that $H$ is a characteristic subgroup \emph{below $\h m$}. For a canonical tuple $\h m$, an example of a characteristic subgroup below $\h m$ is $R(\h m)$; when $p\neq 2$, this is the unique such subgroup, as Theorem \ref{charreg} shows. When $p=2$, there may be several characteristic subgroups below a given canonical tuple $\h m$. The set of such subgroups will be denoted $\Char_{\h m}(G)$. Our goal now is to give a description of these subgroups.

\begin{defn}
A canonical tuple $\h a \in \m C(G)$ is \emph{degenerate} at coordinate $i$ if
\begin{enumerate}
\item[(I)] $a_i=a_{i-1}$, or
\item[(II)] $a_{i+1}-a_i=\lambda_{i+1}-\lambda_i$,
\end{enumerate}
i.e., one of the bounds in Definition \ref{defcan} is sharp.
\end{defn}

We observe that, given a canonical tuple $\h a$, if $a_i=0$ then condition (I) of Definition \ref{defcan} implies $a_{i-1}=0$, so that $\h a$ is type (I) degenerate at coordinate $i$. The following Lemma, on the other hand, gives a simple but useful characterization of when $\h a$ is degenerate at $i$, provided $a_i \neq 0$:

\begin{lemma}\label{degen}
Let $\h a \in \m C(G)$ be a canonical tuple, and let $i\in\{1,\dots,n\}$ be given with $a_i \neq 0$. Then $\h a$ is degenerate at $i$ if and only if $\h a-\h e_i$ is noncanonical (at $i$). Moreover, if $\h a$ is degenerate at $i$ then $O(\h a-\h e_i)=O(\h a)$.
\end{lemma}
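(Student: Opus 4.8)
The plan is to set $\h b = \h a - \h e_i$ and track how the conditions of Definition \ref{defcan} behave under lowering the single coordinate $i$. Since $a_i \neq 0$ gives $a_i \geq 1$, the $i$th entry $b_i = a_i - 1$ is still nonnegative, so $\h b \in \Lambda(G)$; and since $\h b$ agrees with $\h a$ in every coordinate but the $i$th, the only canonicity conditions that can possibly be disturbed are those referencing $a_i$: condition (I) at coordinates $i$ and $i+1$, and condition (II) at coordinates $i-1$ and $i$. Every other condition holds for $\h b$ simply because it holds for the canonical tuple $\h a$.

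First I would dispose of the two disturbed conditions that cannot actually fail. Condition (I) at $i+1$ for $\h b$ reads $a_{i+1} \geq a_i - 1$, which is weaker than the inequality $a_{i+1} \geq a_i$ already guaranteed by canonicity of $\h a$; condition (II) at $i-1$ for $\h b$ reads $(a_i - 1) - a_{i-1} \leq \lambda_i - \lambda_{i-1}$, again weaker than the corresponding inequality for $\h a$. Thus decreasing coordinate $i$ only relaxes these, and any noncanonicity of $\h b$ must occur at coordinate $i$ itself.

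The core of the equivalence is then the two remaining conditions, where I would use the canonicity inequalities of $\h a$ to collapse a strict inequality into an equality. Condition (I) at $i$ for $\h b$ fails exactly when $a_i - 1 < a_{i-1}$; together with $a_i \geq a_{i-1}$ this is precisely $a_i = a_{i-1}$, which is degeneracy condition (I). Condition (II) at $i$ for $\h b$ fails exactly when $a_{i+1} - (a_i - 1) > \lambda_{i+1} - \lambda_i$, i.e. $a_{i+1} - a_i \geq \lambda_{i+1} - \lambda_i$; together with $a_{i+1} - a_i \leq \lambda_{i+1} - \lambda_i$ this is precisely $a_{i+1} - a_i = \lambda_{i+1} - \lambda_i$, which is degeneracy condition (II). The boundary coordinates need only a word: at $i=1$ there is no condition (I) to check, and $a_1 \neq 0$ makes $b_1 \geq 0$ harmless (consistent with the fact that degeneracy (I) at coordinate $1$ would force $a_1 = 0$), while at $i=n$ there is no condition (II). This yields the equivalence: $\h a - \h e_i$ is noncanonical, necessarily at $i$, if and only if $\h a$ is degenerate at $i$.

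For the ``moreover'' clause, I would simply feed $\h b = \h a - \h e_i$ — now known to be noncanonical at $i$ — into Lemma \ref{ncanup3}, which asserts that $T(\h b + \h e_i)$ is automorphic to $T(\h b)$. Since $\h b + \h e_i = \h a$, this is exactly $O(\h a - \h e_i) = O(\h a)$. The argument is essentially bookkeeping; the only point demanding care, and thus the main (minor) obstacle, is the asymmetric index convention — condition (I) ``at $i$'' looks back to $a_{i-1}$ while condition (II) ``at $i$'' looks forward to $a_{i+1}$ — together with the two boundary cases. No deeper difficulty arises, since the automorphic conclusion is a direct application of Lemma \ref{ncanup3}.
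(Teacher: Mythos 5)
Your proof is correct and follows exactly the route the paper takes: the paper disposes of the equivalence with ``follows directly from the definition of degenerate'' and then cites Lemma \ref{ncanup3} for the ``moreover'' clause, which is precisely your structure. Your write-up merely makes explicit the bookkeeping (which canonicity conditions are disturbed by lowering coordinate $i$, and the boundary cases $i=1$, $i=n$) that the paper leaves to the reader.
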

\begin{proof}
The first claim follows directly from the definition of degenerate. The last claim follows from the first by Lemma \ref{ncanup3}.
\end{proof}

\begin{defn}
A subgroup $H$ of a direct product $K_1 \times K_2 \times \cdots \times K_l$ is \emph{projection-surjective} if $\pi_i(H)=K_i$ for each $i\in\{1,\dots,l\}$, where $\pi_i$ is the natural projection map onto the $i$th component of the product. (In other words, $H$ is a subdirect product of $K_1, K_2, \dots, K_l$.)
\end{defn}

\begin{thm}\label{charirreg}
Let $G$ be an abelian 2-group. Given $\h m \in \m C(G)$, the characteristic subgroups below $\h m$ are in one-to-one correspondence with the projection-surjective subgroups of $Z_2^r$, where $r$ is the number of nondegenerate coordinates of $\h m$.
\end{thm}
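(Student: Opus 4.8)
The plan is to realize $\Char_{\h m}(G)$ as an interval in the subgroup lattice of $G$ whose top quotient is $Z_2^r$. Write $N\subseteq\{1,\dots,n\}$ for the set of nondegenerate coordinates of $\h m$, so $|N|=r$, and note $m_i\geq 1$ for $i\in N$ (since $m_i=0$ forces type (I) degeneracy). Define $\h m^-$ by $m_i^-=m_i-1$ for $i\in N$ and $m_j^-=m_j$ for $j\notin N$, and set $F=R(\h m^-)$. A short check against Definition \ref{defcan}, using that a nondegenerate coordinate is neither type (I) nor type (II) degenerate, shows $\h m^-\in\m C(G)$, so $F$ is characteristic by Theorem \ref{canchar}. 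Since $R(\h a)=\prod_k\langle t_{k,a_k}\rangle$, the quotient $R(\h m)/F\cong\prod_k\langle t_{k,m_k}\rangle/\langle t_{k,m_k^-}\rangle$ collapses to a $Z_2$ in each nondegenerate coordinate, giving $R(\h m)/F\cong Z_2^r$ with the images of $t_{i,m_i}$ $(i\in N)$ as standard basis. I would then prove: (a) every $H\in\Char_{\h m}(G)$ satisfies $F\leq H\leq R(\h m)$; (b) every subgroup with $F\leq H\leq R(\h m)$ is characteristic; and (c) under $H\mapsto H/F$ the condition ``$H$ is below $\h m$'' corresponds exactly to projection-surjectivity of $H/F$.

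For (a) the inclusion $H\leq R(\h m)$ is the computation in the proof of Theorem \ref{charreg}; for $F\leq H$ it suffices to exhibit each generator $t_{k,m_k^-}$ in $H$. If $i\in N$, choose a type $T(\h a)\subseteq H$ with $a_i=m_i$ (possible by the definition of $\h m$); then $\prod_j t_{j,a_j}$ and $t_{i,m_i}\prod_{j\neq i}t_{j,a_j}^{-1}$ lie in $T(\h a)\subseteq H$, and their product is $t_{i,m_i}^2=t_{i,m_i-1}=t_{i,m_i^-}$. For a degenerate coordinate $j$ with $m_j\geq 1$ I must instead produce the full-height element $t_{j,m_j}$, and here I would use the automorphisms of Lemma \ref{ncanup3}: if $j$ is type (II) degenerate, the map fixing each $t_k$ except $t_{j+1}\mapsto t_j t_{j+1}$ sends $t_{j+1,m_{j+1}}$ to $t_{j,m_j}t_{j+1,m_{j+1}}$, so applied to a canonical element of $H$ attaining $m_{j+1}$ in coordinate $j+1$ and then divided by that element it yields $t_{j,m_j}\in H$; the type (I) case $m_j=m_{j-1}$ is symmetric via $t_{j-1}\mapsto t_{j-1}t_{j,\lambda_{j-1}}$. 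Every degenerate $j$ with $m_j\ge1$ has one of these neighbors available, so $F\leq H$.

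For (b), fix $\phi\in\Aut(G)$; as $F$ and $R(\h m)$ are characteristic, $\phi$ induces $\bar\phi\in GL_r(\F_2)$ on $R(\h m)/F\cong Z_2^r$, and it is enough to show $\bar\phi=\mathrm{id}$. Writing $\phi(t_i)=\prod_j t_j^{c_{ji}}$ (so the $2$-adic valuation satisfies $v_2(c_{ji})\ge\max(0,\lambda_j-\lambda_i)$ by well-definedness), we get $\phi(t_{i,m_i})=\prod_j t_j^{c_{ji}2^{\lambda_i-m_i}}$. The crux is to bound the order $2^{b_j}$ of each off-diagonal factor $(j\neq i)$ and verify $b_j\le m_j^-$, so that all of them lie in $F$: for $j>i$ the divisibility of $c_{ji}$ gives $b_j\le m_i\le m_j^-$ (valid since $m_j^-=m_j\ge m_i$ if $j\notin N$, and $m_j>m_i$ if $j\in N$, as a nondegenerate coordinate exceeds all earlier ones); for $j<i$, condition (II) of canonicity summed from $j$ to $i$ gives $b_j\le m_j$, strict exactly when $j$ fails to be type (II) degenerate, which is precisely the case $j\in N$ where $m_j^-=m_j-1$ is needed. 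Thus $\phi(t_{i,m_i})\equiv t_i^{c_{ii}2^{\lambda_i-m_i}}\pmod F$; if $c_{ii}$ were even this would be $\equiv 0$, contradicting invertibility of $\bar\phi$, so $c_{ii}$ is odd and (using $p=2$) $\phi(t_{i,m_i})\equiv t_{i,m_i}\pmod F$. Hence $\bar\phi$ fixes each basis vector, $\bar\phi=\mathrm{id}$, every intermediate $H$ is $\phi$-invariant, and (b) follows. I expect this off-diagonal estimate to be the main obstacle, since it is the one point where both canonicity conditions and the prime $2$ are used at once.

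Finally, for (c), the map $H\mapsto H/F$ is a lattice isomorphism from the interval $[F,R(\h m)]$ onto the subgroup lattice of $Z_2^r$. A subgroup $H$ in this interval is below $\h m$ iff for each $i\in N$ some element of $H$ has $i$th component of order $2^{m_i}$ (the coordinates $j\notin N$ are automatic, as $F\le H$ already attains $m_j$ there while $H\le R(\h m)$ bounds it), and this says exactly that $\pi_i(H/F)=Z_2$ for all $i$, i.e.\ that $H/F$ is projection-surjective. Combining this with (a) and (b), $H\mapsto H/F$ is the desired bijection between $\Char_{\h m}(G)$ and the projection-surjective subgroups of $Z_2^r$.
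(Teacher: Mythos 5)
Your proposal is correct, and it reaches the theorem by a genuinely different route than the paper. Both proofs share the same skeleton---trap every $H\in\Char_{\h m}(G)$ in an interval of the subgroup lattice with elementary abelian top quotient via the squaring trick $gg'=t_{i,m_i}^2$, then invoke the lattice isomorphism theorem---but they diverge at the decisive step. The paper clips \emph{every} nonzero coordinate ($m_i'=\ul{m_i-1}$), so its quotient is $Z_2^l$ with $l\geq r$; it then shows $\pi(H)$ is automatically full on the degenerate factors (via Lemma \ref{degen}), and proves surjectivity of $H\mapsto K$ by taking preimages and verifying characteristicness through a combinatorial type-raising argument (any noncanonical type in $H$ is accompanied by a strictly larger automorphic type in $H$). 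You instead clip only the nondegenerate coordinates, check that $\h m^-$ remains canonical so that $F=R(\h m^-)$ is characteristic by Theorem \ref{canchar}, and get the quotient to be exactly $Z_2^r$; the degenerate-coordinate work then migrates into your step (a), where explicit automorphisms (the same maps as in Lemma \ref{ncanup3}) produce the full-height generators $t_{j,m_j}\in H$---this plays the role, one level up, of the paper's $g\hat g$ computation. The real divergence is your step (b): every $\phi\in\Aut(G)$ induces the identity on $R(\h m)/F$, proved by the valuation bound on the matrix entries $c_{ji}$. I checked that bound, including the two places where nondegeneracy of $j$ supplies the needed strict inequality ($m_j>m_i$ for $j>i$, and $m_i-m_j<\lambda_i-\lambda_j$ for $j<i$) and the case of repeated $\lambda$'s (a nondegenerate coordinate forces $\lambda_{j-1}<\lambda_j<\lambda_{j+1}$, and the estimate survives equalities elsewhere), and it is sound. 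This yields something strictly stronger than what the paper establishes---the \emph{entire} interval $[F,R(\h m)]$ consists of characteristic subgroups, since $\Aut(G)$ acts trivially on the quotient---and it settles injectivity and surjectivity in one stroke, trading the paper's orbit combinatorics for congruence computations with automorphism matrices; conversely, the paper's argument never leaves the type/orbit framework it has already built. Two harmless imprecisions, neither a gap: ``strict exactly when $j$ fails to be type (II) degenerate'' should be ``strict whenever'' (the converse can fail but is not needed), and the ``canonical element'' in step (a) should simply be the standard representative $\prod_k t_{k,a_k}$ of a type attaining the relevant maximum, whether or not that type is canonical.
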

\begin{proof}
Let $H$ be any characteristic subgroup below $\h m$. By the definition of $\h m$, for each $i$ there is some type $T(\h a) \subseteq H$ with $a_i=m_i$. Then $g=\prod_{j=1}^n t_{j,a_j}$ and $g'=t_{i,m_i}\prod_{j\neq i} t_{j,a_j}^{-1}$ are two elements of $T(\h a)$. Since $H$ is a subgroup, $gg' = t_{i,m_i}^2 = t_{i,\ul{m_i-1}}\in H$, where $\ul{x}$ is the ``clipping" function defined by
$$\ul x=\begin{cases}x, & \text{if $x\geq 0$}, \\ 0, & \text{if $x<0$}.\end{cases}$$
If we define $\h m'$ by $m_i' = \ul{m_i-1}$, then it is clear that $R(\h m') \subseteq H$, since the set $\{t_{i,\ul{m_i-1}} : i =1,\dots,n\}$ generates $R(\h m')$. Clearly $R(\h m') \subseteq R(\h m)$ and $R(\h m)/R(\h m') \cong Z_2^l$ where $l$ is the number of nonzero entries of $\h m$. To be more specific, let $\pi : R(\h m) \to R(\h m)/R(\h m')$ be the natural projection map, and set $K_i=\pi(\langle t_{i,m_i} \rangle)$; then $K_i \cong Z_2$ if $m_i\neq 0$, while $K_i$ is trivial if $m_i=0$. Let $k_i$ be the generator for $K_i$ (so $|k_i|=2$ unless $m_i=0$, in which case $k_i=1$). The lattice isomorphism theorem implies that the subgroups of $R(\h m)$ containing $R(\h m')$ (among which are all the subgroups $H$ in $\Char_{\h m}(G)$) are in one-to-one correspondence with subgroups of $\pi(R(\h m)) \cong Z_2^l$. Now, note that by definition, for any $i$, if $m_i=0$ then $i$ is a degenerate coordinate. If $i$ is a nonzero degenerate coordinate of $\h m$, define $\h a'$ by $a_j'=a_j$ for all $j\neq i$ and $a_i'=a_i-1$. Then observe that the degeneracy of $i$ ensures $O(\h a')=O(\h a)$ by Lemma \ref{degen}. Thus $T(\h a') \subseteq H$, and so $\hat g=\prod_{i=1}^n t_{1,a_i'} \in H$. If we write $\pi(g)=\prod_{j=1}^n k_j^{\epsilon_j}$, where each $\epsilon_j \in \{0,1\}$, then $\pi(\hat g)=\prod_{j\neq i} k_j^{\epsilon_j}$. Since $a_i=m_i$, we have $\epsilon_i=1$, and it follows that $\pi(g\hat g)=k_i^{\epsilon_i}\prod_{j\neq i} k_j^{2\epsilon_j} = k_i$, so that $K_i \leq \pi(H)$. Thus, if $D$ is the set of nonzero degenerate coordinates of $\h m$, we may write
$$ \pi(H) = K \times \prod_{j\in D} K_j,$$
where $K$ is a projection-surjective subgroup of $\prod_{j\in D'} K_j$, where $D'$ is the set of nondegenerate coordinates of $\h m$. This gives us an injective map $H \mapsto K$ from $\Char_{\h m}(G)$ into the set of projection-surjective subgroups of $\prod_{j \in D'} K_j \cong Z_2^r$. It remains only to show that this correspondence is surjective.

So let $K$ be an arbitrary projection-surjective subgroup of $\prod_{j \in D'} K_j$. Set $K'=K \times \prod_{j\in D} K_j$ and let $H=\pi^{-1}(K')$. The projection-surjectivity of $K$ ensures that $H$ is a subgroup below $\h m$. We only need to show that $H$ is characteristic. To do this, it is enough to show that if $T(\h a)$ is a noncanonical type contained in $H$ then there is another type $T(\h a')$ contained in $H$ with $\h a' > \h a$. Since $H$ contains the characteristic subgroup $R(\h m')$, it is sufficient to consider the case where $T(\h a)$ is not contained in $R(\h m')$, namely $\h a > \h m'$. Since $T(\h a)$ is noncanonical, there is some $i$ such that either $a_{i-1} > a_i$ or $a_{i+1}-a_i > \lambda_{i+1}-\lambda_i$. In the former case, we have $a_i < a_{i-1} \leq m_{i-1} \leq m_i$ since by condition (I) of $\h m$ being canonical, while in the latter case, we have
$a_i < a_{i+1} - (\lambda_{i+1}-\lambda_i) \leq a_{i+1} - (m_{i+1}-m_i) \leq a_{i+1} - (a_{i+1}-m_i) = m_i$. So in either case we have $a_i < m_i$, which implies $a_i=m_i-1$, since $\h a \geq \h m'$. Now if every such coordinate $i$ was nondegenerate in $\h m$, then by repeated application of Lemma \ref{degen}, $\h a$ would be canonical, contrary to assumption. So there must be some such $i$ which is a degenerate coordinate of $\h m$. Define $\h a'$ by $a_j'=a_j$ for $j\neq i$ and $a_i'=m_i$. Let $g$ be an element of type $T(\h a)$ and write $k=\pi(g)=\prod_{j=1}^n k_j^{\epsilon_j}$ with $\epsilon_j \in \{0,1\}$ (namely, we will have $\epsilon_j=1$ if and only if $a_j=m_j$). Then $k'=k_i \prod_{j\neq i} k_j^{\epsilon_j}$ is also in $\pi(H)$ (since $k_i \in \prod_{j\in D} K_j \subseteq K'$), and the set $\pi^{-1}(k')$ includes elements of type $\h a'$, so $T(\h a') \subseteq H$, as desired.
\end{proof}

A statement equivalent to the following is stated (without proof) in \cite[p. 23]{miller}:
\begin{cor}\label{irreg_exist}
Given an abelian 2-group $G$, an irregular characteristic subgroup below a canonical tuple $\h a\in\m C(G)$ exists if and only if $\h a$ has at least two nondegenerate coordinates.
\end{cor}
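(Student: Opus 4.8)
The plan is to read this off directly from Theorem \ref{charirreg}, once I pin down which projection-surjective subgroup corresponds to the regular subgroup $R(\h a)$ and which canonical tuple a regular subgroup below $\h a$ must belong to. The key preliminary observation is that the \emph{only} regular characteristic subgroup below $\h a$ is $R(\h a)$ itself. Indeed, suppose $R(\h b)$ is regular (so $\h b$ is canonical by Theorem \ref{canchar}) and lies below $\h a$. Since $T(\h c)\subseteq R(\h b)$ holds precisely when $\h c\le \h b$, the defining maximum $m_i=\max\{c_i : T(\h c)\subseteq R(\h b)\}$ equals $b_i$ for every $i$, forcing $\h b=\h a$. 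Consequently the statement ``an irregular characteristic subgroup below $\h a$ exists'' is equivalent to ``$\Char_{\h a}(G)$ contains a subgroup other than $R(\h a)$,'' i.e.\ to $|\Char_{\h a}(G)|>1$.

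Next I would invoke Theorem \ref{charirreg}, which puts $\Char_{\h a}(G)$ in bijection with the projection-surjective subgroups of $Z_2^r$, where $r$ is the number of nondegenerate coordinates of $\h a$. Tracing the correspondence $H\mapsto K$ constructed in that proof, the regular subgroup $R(\h a)$ is the one whose image under $\pi$ exhausts the full quotient $R(\h a)/R(\h m')$, so it corresponds to the \emph{whole} group $Z_2^r$. Combined with the previous paragraph, this shows that an irregular characteristic subgroup below $\h a$ exists if and only if $Z_2^r$ admits a \emph{proper} projection-surjective subgroup.

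It then remains only to carry out an elementary count of projection-surjective subgroups of $Z_2^r$. If $r=0$ the group is trivial and has no proper subgroup at all; if $r=1$ the single proper subgroup is trivial and fails to surject onto the one coordinate, so in both cases the full group is the only projection-surjective subgroup. If instead $r\ge 2$, then the kernel of the homomorphism $Z_2^r\to Z_2$ sending $x\mapsto x_1+x_2$ is a proper subgroup of index $2$ that still projects onto each coordinate (coordinates $1$ and $2$ are hit by the pair $(1,1,0,\dots,0)$ and $(0,\dots,0)$, and coordinates $j\ge 3$ are free), giving the desired proper projection-surjective subgroup. Thus a proper projection-surjective subgroup of $Z_2^r$ exists exactly when $r\ge 2$, which is precisely the assertion that $\h a$ has at least two nondegenerate coordinates.

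I do not anticipate a genuine obstacle here, since all the substantive work is already contained in Theorem \ref{charirreg}; the only point demanding care is the bookkeeping of the two easy reductions—that $R(\h a)$ is the unique regular subgroup below $\h a$, and that it is exactly the subgroup matched with the full group $Z_2^r$ under the bijection—after which the result follows from a one-line count of subgroups of an elementary abelian $2$-group.
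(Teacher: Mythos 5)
Your proposal is correct and takes essentially the same route as the paper: Corollary \ref{irreg_exist} is read off from Theorem \ref{charirreg} together with the fact that $Z_2^r$ has a proper projection-surjective subgroup exactly when $r\geq 2$. The paper does this in one line, and you simply make explicit the bookkeeping it leaves implicit (that $R(\h a)$ is the unique regular subgroup below $\h a$ and corresponds to the full group $Z_2^r$ under the bijection, and the kernel-of-$x_1+x_2$ construction witnessing a proper projection-surjective subgroup when $r\geq 2$).
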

\begin{proof}
Since $Z_2^k$ has proper projection-surjective subgroups if and only if $k\geq 2$, this follows from Theorem \ref{charirreg}.
\end{proof}

\begin{example}
Let $G=Z_2\times Z_8$. Let $H_1,\dots,H_6$ be the regular characteristic subgroups of $G$, as shown in Table \ref{tabz2z8}. We note that (1,2) is the only canonical tuple with two nondegenerate coordinates; consequently, there is an irregular characteristic subgroup $K$ below (1,2) and this is the only irregular characteristic subgroup of $G$. Note that the lattice of characteristic subgroups of $G$ is not distributive, in contrast to Theorem \ref{cordist}; see Theorem \ref{nondist} below.
\end{example}

\begin{table}
\centering
\caption{Characteristic subgroups of $Z_2\times Z_8$}\label{tabz2z8}
\begin{tabular}{lr}
\begin{tabular}{|l|l|} \hline
$H_1$&R(0,0)\\ \hline
$H_2$&R(0,1)\\ \hline
$H_3$&R(0,2)\\ \hline
$H_4$&R(1,1)\\ \hline
$K$&$\text{R}(0,1)\cup\text{T}(1,2)$\\ \hline
$H_5$&R(1,2)\\ \hline
$H_6$&R(1,3)\\ \hline
\end{tabular}
&
\begin{tabular}{r}
\psset{xunit=.2cm,yunit=.2cm,labelsep=2.5mm}
\begin{pspicture*}(-10,-5)(10,23)
\Rput[t](0,0){$H_1$}

\Rput[t](0,5){$H_2$}
\psline(0,-.5)(0,1.5)

\Rput[t](-5,10){$H_3$}
\Rput[t](5,10){$H_4$}
\Rput[t](0,10){$K$}
\psline(-1.5,4.5)(-3.5,6.5)
\psline(1.5,4.5)(3.5,6.5)
\psline(0,4.5)(0,6.5)

\Rput[t](0,15){$H_5$}
\psline(-3.5,9.5)(-1.5,11.5)
\psline(3.5,9.5)(1.5,11.5)
\psline(0,9.5)(0,11.5)

\Rput[t](0,20){$H_6$}
\psline(0,14.5)(0,16.5)
\end{pspicture*}
\end{tabular}
\end{tabular}
\end{table}

The following theorem will not be needed in what follows; however, it is of interest because it, together with Theorem \ref{charirreg}, enables one to enumerate the characteristic subgroups of any finite abelian 2-group, and hence of any finite abelian group (as an example, see Table \ref{numchars}):

\begin{thm}\label{numproj}
The number of projection-surjective subgroups of $Z_2^k$ is
$$n_k=\sum_{i=0}^k(-1)^{i+k}\binom{k}{i}\sum_{j=0}^i{\binom{i}{j}}_2,$$
where ${\binom{i}{j}}_2$ are the Gaussian binomial coefficients given by
$${\binom{i}{j}}_2=\frac{\displaystyle\prod_{l=0}^{j-1}\left(2^{i-l}-1\right)}
{\displaystyle\prod_{l=1}^j\left(2^l-1\right)}.$$
\end{thm}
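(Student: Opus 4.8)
The plan is to identify subgroups of $Z_2^k$ with $\F_2$-subspaces of the vector space $\F_2^k$ and then count the projection-surjective ones by inclusion--exclusion over the coordinate hyperplanes. Since $Z_2^k$ is elementary abelian, its subgroups are precisely the $\F_2$-linear subspaces $V \subseteq \F_2^k$. For each coordinate $i$, let $H_i = \{x \in \F_2^k : x_i = 0\}$ be the $i$th coordinate hyperplane. A subspace $V$ is projection-surjective exactly when $\pi_i(V) = \F_2$ for every $i$, i.e.\ when $V \not\subseteq H_i$ for all $i$; so $n_k$ counts the subspaces of $\F_2^k$ avoiding every $H_i$.

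First I would record the standard fact that the number of $j$-dimensional subspaces of $\F_2^m$ is the Gaussian binomial coefficient ${\binom{m}{j}}_2$, so that the total number of subspaces of $\F_2^m$ is $\sum_{j=0}^m {\binom{m}{j}}_2$. Next, for a subset $S \subseteq \{1,\dots,k\}$, I would count the subspaces $V$ with $V \subseteq H_i$ for all $i \in S$. These are exactly the subspaces contained in $\bigcap_{i \in S} H_i$, which is the coordinate subspace obtained by setting the coordinates indexed by $S$ to zero and is therefore isomorphic to $\F_2^{k-|S|}$. Hence this count equals $\sum_{j=0}^{k-|S|} {\binom{k-|S|}{j}}_2$ and depends only on $|S|$.

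Then I would apply inclusion--exclusion to the ``bad'' conditions $V \subseteq H_i$. Writing $B_i$ for the set of subspaces with $V \subseteq H_i$, the number of subspaces lying in none of the $B_i$ is $\sum_{S} (-1)^{|S|}\,\bigl|\bigcap_{i\in S} B_i\bigr| = \sum_{s=0}^k (-1)^s \binom{k}{s} \sum_{j=0}^{k-s} {\binom{k-s}{j}}_2$, using the previous step together with the fact that there are $\binom{k}{s}$ subsets of size $s$. Finally, reindexing by $i = k-s$ converts $(-1)^s$ into $(-1)^{i+k}$ and $\binom{k}{s} = \binom{k}{k-i}$ into $\binom{k}{i}$, which yields the stated formula $n_k = \sum_{i=0}^k (-1)^{i+k}\binom{k}{i}\sum_{j=0}^i {\binom{i}{j}}_2$.

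There is no deep obstacle here: the argument is a clean inclusion--exclusion once the two ingredients---that subgroups of $Z_2^k$ are $\F_2$-subspaces, and that an intersection of coordinate hyperplanes is itself a coordinate subspace---are in place. The only points requiring care are the Gaussian binomial count of $j$-dimensional subspaces (a standard result one may simply quote) and the bookkeeping in the final reindexing, where one must verify that $(-1)^s = (-1)^{k-i} = (-1)^{k+i}$ and that the upper limit $k-s$ of the inner sum becomes $i$, matching $\sum_{j=0}^i {\binom{i}{j}}_2$.
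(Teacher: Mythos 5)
Your proposal is correct and is essentially the paper's own argument: the paper also identifies subgroups contained in a coordinate subspace with subspaces of $\F_2^{|Y|}$, counts them by the sum of Gaussian binomial coefficients, and applies inclusion--exclusion over the coordinate conditions (phrased there via the map $\rho(H)$ and sets $Y$ of allowed coordinates rather than your ``bad'' hyperplane events, which is just the complementary indexing). The reindexing you perform at the end reproduces the paper's signs $(-1)^{|Y|+|X|}$ exactly.
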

\begin{remark}
The sequence $n_k$ begins $1,1,2,6,26,158,1330,15414,245578,5382862,\dots$ for $k=0,1,2,\dots$ and may be found as A135922 of Sloane's on-line encyclopedia of integer sequences \cite{sloane}.
\end{remark}
\begin{proof}
Let $X=\{1,\dots,k\}$. For any subgroup $H$ of $Z_2^k$, set $\rho(H)$ denote the set of integers $i\in X$ such that $\pi_i(H)=Z_2$. So $H$ is projection-surjective if and only if $\rho(H)=X$. For any subset $Y \subseteq X$, let $n(Y)$ be the number of subgroups $H$ of $Z_2^k$ such that $\rho(H)=Y$. We would like to compute $n_k=n(X)$. Now define $m(Y)$ to be the number of subgroups $H$ of $Z_2^k$ with $\rho(H) \subseteq Y$. So
$$m(Y) =\sum_{Z \subseteq Y} n(Z).$$
Now $m(Y)$ is simply the total number of subgroups of $Z_2^{|Y|}$; this is the same as the number of subspaces of a $|Y|$-dimensional vector space over $\F_2$. Since the number of $j$-dimensional subspaces of such a vector space is known to be
$$\frac{\displaystyle\prod_{l=0}^{j-1} \left(2^{|Y|}-2^l\right)}{\displaystyle\prod_{l=0}^{j-1} \left(2^j-2^l\right)}
= \frac{\displaystyle\prod_{l=0}^{j-1} \left(2^{|Y|-l}-1\right)}{\displaystyle\prod_{l=0}^{j-1} \left(2^{j-l}-1\right)}
= \frac{\displaystyle\prod_{l=0}^{j-1} \left(2^{|Y|-l}-1\right)}{\displaystyle\prod_{l=1}^{j} \left(2^l-1\right)}
={\binom{|Y|}{j}}_2
$$
(see, e.g., \cite[p. 412]{dummit}), it follows that
$$m(Y) =\sum_{j=0}^{|Y|}{\binom{|Y|}{j}}_2.$$
We note, in particular, that $m(Y)$ only depends on the size of $Y$. By the inclusion-exclusion principle (see, e.g., \cite[p. 185]{brualdi}) we have
\begin{align*}
n(X)&=\sum_{Y \subseteq X} (-1)^{|Y|+|X|}m(Y)\\
&=\sum_{i=0}^k \sum_{\overset{Y \subseteq X}{|Y|=i}} (-1)^{i+k}m(Y)\\
&=\sum_{i=0}^k \binom{k}{i}(-1)^{i+k}m(\{1,\dots,i-1\})\\
&=\sum_{i=0}^k (-1)^{i+k}\binom{k}{i}\sum_{j=0}^{i}{\binom{i}{j}}_2,
\end{align*}
as desired.
\end{proof}

\begin{table}
\centering
\caption{Number of characteristic subgroups of $Z_2\times Z_{2^2}\times Z_{2^3}\times \cdots\times Z_{2^n}$}\label{numchars}
\begin{tabular}{|l|r|} \hline
1 & 2 \\ \hline
2 & 4 \\ \hline
3 & 9 \\ \hline
4 & 21 \\ \hline
5 & 52 \\ \hline
6 & 134 \\ \hline
7 & 363 \\ \hline
8 & 1027 \\ \hline
9 & 3054 \\ \hline
10 & 9516 \\ \hline
11 & 31229 \\ \hline
12 & 107745 \\ \hline
13 & 392792 \\ \hline
14 & 1511010 \\ \hline
15 & 6167551 \\ \hline
16 & 26670383 \\ \hline
17 & 122982386 \\ \hline
18 & 603221064 \\ \hline
19 & 3172965937 \\ \hline
20 & 17817816493 \\ \hline
21 & 107984192188 \\ \hline
22 & 700497542494 \\ \hline
23 & 4939837336979 \\ \hline
24 & 37315530126171 \\ \hline
25 & 309078760337078 \\ \hline
26 & 2736173394567076 \\ \hline
27 & 26852600855758373 \\ \hline
28 & 279765993533235769 \\ \hline
29 & 3279737127172518880 \\ \hline
30 & 40284238921560357658 \\ \hline
31 & 568574087799302502375 \\ \hline
32 & 8225663800386744379975 \\ \hline
33 & 140886928953442040025658 \\ \hline
34 & 2392158426272284053385152 \\ \hline
35 & 50137841812585275382579929 \\ \hline
36 & 993099669210856047011613573 \\ \hline
37 & 25701228868609248542152214980 \\ \hline
38 & 589013066872810742690824633750 \\ \hline
39 & 19005348215516204077748683286267 \\ \hline
40 & 498993627095578092364760281155059 \\ \hline
\end{tabular}

\end{table}

We will have need of the following theorem later on:

\begin{thm}\label{osplit}
Let $G$ be an abelian $p$-group (for any prime $p$) with no repeated factors (i.e., $0 < \lambda_1 < \lambda_2 < \cdots < \lambda_n$), and let $\h a\in \m C(G)$ be a canonical tuple. Then
$$O(\h a)=\bigcup\{T(\h b) : \h b \leq \h a \text{ and, for each nondegenerate coordinate $i$ of $\h a$, } b_i=a_i\}$$
\end{thm}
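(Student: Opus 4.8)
The plan is to prove the two inclusions separately, in both cases exploiting the \emph{ascent} mechanism behind Theorem \ref{unican}: every type $T(\h b)$ is joined to its canonical representative by a chain $\h b = \h b^{(0)} < \h b^{(1)} < \cdots < \h b^{(k)}$ in which each step adds some $\h e_i$ at a coordinate where the current tuple is noncanonical (Lemma \ref{ncanup3}), and the terminal tuple $\h b^{(k)}$ is the unique canonical tuple of $O(\h b)$. Since this chain is strictly increasing and ends at the canonical representative, every intermediate tuple lies below it, and I will use this monotonicity repeatedly. Throughout I write $B(\h a)$ for the set of tuples on the right-hand side, i.e.\ those $\h b \le \h a$ with $b_i = a_i$ at every nondegenerate coordinate $i$ of $\h a$, and I set $\delta_i = \lambda_i - \lambda_{i-1}$, noting that the hypothesis of no repeated factors gives $\delta_i \ge 1$ for all $i$.

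For the inclusion ``types of $O(\h a) \subseteq B(\h a)$'' (nondegenerate coordinates are rigid), suppose $T(\h b) \subseteq O(\h a)$. Theorem \ref{unican} gives $\h b \le \h a$, so I need only show $b_i = a_i$ whenever $i$ is nondegenerate. I would run the ascent from $\h b$ up to $\h a$ and argue that a nondegenerate coordinate is never incremented. The key trick is to examine the \emph{last} step at which coordinate $i$ is incremented: after that step coordinate $i$ is frozen, so its value there is $a_i - 1$, while the tuple is still $\le \h a$ and noncanonical at $i$. If the failure at $i$ is of type (I) this forces $a_i \le a_{i-1}$, and if of type (II) it forces $a_{i+1} - a_i \ge \delta_{i+1}$; combined with canonicity of $\h a$, either conclusion says precisely that $i$ is degenerate, a contradiction. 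Hence nondegenerate coordinates keep their value throughout the ascent, giving $b_i = a_i$.

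For the reverse inclusion ``$B(\h a) \subseteq$ types of $O(\h a)$'', I would again ascend, but now starting from an arbitrary $\h b \in B(\h a)$ and checking that the ascent never leaves $B(\h a)$. Indeed, if $\h b \in B(\h a)$ is noncanonical at a coordinate $i$, a short case analysis (type (I) versus type (II), using $\h b \le \h a$ and the canonicity of $\h a$) shows that $i$ must be a degenerate coordinate of $\h a$ and that $b_i < a_i$; hence $\h b + \h e_i$ is again in $B(\h a)$ and, by Lemma \ref{ncanup3}, still in $O(\h b)$. The ascent therefore terminates at a canonical tuple lying in $B(\h a)$, and it remains to identify this tuple as $\h a$ itself.

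The main obstacle is thus the following rigidity statement, which is exactly where the no-repeated-factors hypothesis enters: the only canonical tuple in $B(\h a)$ is $\h a$. To prove it, suppose $\h c \le \h a$ is canonical with $\h c \ne \h a$, and let $[l,r]$ be a maximal block of consecutive indices on which $c_j < a_j$; I claim this block contains a nondegenerate coordinate. Comparing the canonical step inequalities for $\h c$ and $\h a$ at the bottom of the block gives $a_{l-1} < a_l$, so coordinate $l$ is not degenerate of type (I), hence, if degenerate at all, is of type (II): $a_{l+1} - a_l = \delta_{l+1}$. Because $\delta_j \ge 1$, a maximal step out of one coordinate forces a nonzero step into the next, so type-(II) degeneracy propagates up the block, forcing each $j \in [l,r]$ to be degenerate of type (II). At the top of the block this collides with $c_{r+1} = a_{r+1}$ (or with the absence of an $(r+1)$st coordinate when $r = n$), forcing $c_r \ge a_r$ and contradicting $c_r < a_r$. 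Hence some coordinate of the block is nondegenerate while $\h c$ and $\h a$ disagree there, contradicting $\h c \in B(\h a)$. This establishes the rigidity statement and closes both inclusions. I expect this block-propagation argument, together with the careful bookkeeping of the type-(I)/type-(II) degeneracy conditions it requires, to be the most delicate part; by contrast the two ascent arguments should be routine given Lemma \ref{ncanup3} and Theorem \ref{unican}.
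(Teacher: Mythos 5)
Your proposal is correct, and its core engine (the ascent of Lemma \ref{ncanup3}, degeneracy bookkeeping, and propagation of type-(II) degeneracy along a block of coordinates) is the same one the paper uses; but the two halves are organized differently enough to be worth comparing. Writing $B(\h a)$ for the right-hand side, as you do: for the inclusion $O(\h a) \subseteq B(\h a)$ the paper does not use the ascent at all. It argues through the characteristic-subgroup machinery: if $i$ is nondegenerate and $b_i < a_i$, then $\h a - \h e_i$ is canonical by Lemma \ref{degen}, so $R(\h a - \h e_i)$ is characteristic by Theorem \ref{canchar}; it contains $T(\h b)$, hence all of $O(\h b) = O(\h a)$, which is impossible since $\h a \nleq \h a - \h e_i$. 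That route is shorter but leans on Theorem \ref{canchar}; your last-increment analysis of the ascent chain proves the same statement using only Lemma \ref{ncanup3} and Theorem \ref{unican}, keeping the whole argument at the level of tuple combinatorics. For the reverse inclusion the two proofs are essentially the same idea in different packaging: the paper takes a \emph{maximal} $\h b$ with $T(\h b) \subseteq B(\h a)$ but $O(\h b) \neq O(\h a)$, locates a coordinate where $\h b$ is noncanonical (via the same type-(II) propagation you use, including the observation that the propagation cannot run off the top coordinate $n$), and increments it within $B(\h a)$ to contradict maximality; you instead run the ascent explicitly inside $B(\h a)$ and isolate, as a separate rigidity claim, the fact that $\h a$ is the only canonical tuple in $B(\h a)$. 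Your organization makes explicit a fact that is only implicit in the paper's maximal-counterexample argument, which is a modest gain in clarity at the cost of a little length.

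One wording correction in your rigidity proof: when the block ends at $r = n$, the contradiction is not ``forcing $c_r \geq a_r$'' --- there is no coordinate above $n$ from which to extract that inequality. Rather, the propagation would force coordinate $n$ to be type-(II) degenerate, which is vacuously impossible since there is no $(n+1)$st coordinate; and coordinate $n$ cannot be type-(I) degenerate either, since $a_n - a_{n-1} = \lambda_n - \lambda_{n-1} > 0$ by propagation at $n-1$ (or by your bottom-of-block argument if $l = n$). So coordinate $n$ is nondegenerate, and the disagreement $c_n < a_n$ already contradicts $\h c \in B(\h a)$. Your parenthetical shows you anticipated this case; it just needs to be stated with the correct contradiction. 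This is exactly the role played in the paper's proof by the argument that the index $j$ (the first coordinate above $i$ with $b_j = a_j$) must exist.
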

\begin{proof}
Let
$$A=\bigcup\{T(\h b) : \h b \leq \h a \text{ and, for each nondegenerate coordinate $i$ of $\h a$, } b_i=a_i\}.$$
We first show $O(\h a) \subseteq A$. Given any $T(\h b) \subseteq O(\h a)$, we have $\h b \leq \h a$ since $T(\h a)$ is the maximum type in $O(\h a)$ by Theorem \ref{unican}. Now let $i$ be a nondegenerate coordinate of $\h a$ and suppose $b_i<a_i$. Then $\h a'=\h a-\h e_i$ is canonical by Lemma \ref{degen}, hence by Theorem \ref{canchar} $R(\h a')$ is a characteristic subgroup with $\h b \leq \h a'$, so $O(\h a)=O(\h b)\subseteq R(\h a')$, which is a contradiction since $\h a \nleq \h a'$. Consequently $b_i=a_i$, which proves $O(\h a) \subseteq A$.

Now we must show $A \subseteq O(\h a)$. Suppose there is some $T(\h b) \subseteq A$ with $T(\h b) \nsubseteq O(\h a)$, i.e. $O(\h b)\neq O(\h a)$. Take a maximal such $\h b$. We must then have $\h b < \h a$. Let $i$ be the first coordinate for which $b_i < a_i$. Then, by the definition of $A$, $i$ must be a degenerate coordinate of $\h a$. If $i$ is type (I) degenerate, then $a_i=a_{i-1}$, so we have $b_i<a_i=a_{i-1}=b_{i-1}$, and by Lemma \ref{ncanup3}, $O(\h b+\h e_i)=O(\h b)\neq O(\h a)$, while $\h b+\h e_i>\h b$, contradicting the maximality of $\h b$. On the other hand, if $i$ is type (II) degenerate, then $a_i+\lambda_{i+1}-\lambda_i=a_{i+1}$, then let $j$ be the first coordinate greater than $i$ such that $b_j=a_j$; such a $j$ must exist since otherwise all the coordinates $i,\dots,n$ of $\h a$ would be degenerate and we would have $a_n=a_{n-1}=\cdots=a_{i+1}=a_i$, contradicting $a_i+\lambda_{i+1}-\lambda_i=a_{i+1}$ since $\lambda_{i+1} \neq \lambda_i$. Thus all of the coordinates $i,\dots,j-1$ of $\h a$ are degenerate. We find that each coordinate $k\in\{i,\dots,j-1\}$ is type (II) degenerate, i.e. we find that $a_k+\lambda_{k+1}-\lambda_k=a_{k+1}$: For $k=i$ this holds by assumption, while for $k>i$, if $k$ were degenerate of the first type, i.e. $a_k=a_{k-1}$, we would have a contradiction since  by induction, $a_{k-1}+\lambda_k-\lambda_{k-1}=a_k$ and $\lambda_k\neq\lambda_{k-1}$. So we have $b_{j-1}+\lambda_j-\lambda_{j-1} < a_{j-1}+\lambda_j-\lambda_{j-1}=a_j=b_j$, so by Lemma \ref{ncanup3}, we again obtain a $\h b+\h e_i>\h b$ with $O(\h b+\h e_i)=O(\h b)\neq O(\h a)$, contradicting the maximality of $\h b$.
\end{proof}

\begin{example}
Let $G=Z_p\times Z_{p^3}\times Z_{p^5}$. The first and third coordinates of the tuple $(1,3,3)$ are degenerate. So we have
\begin{align*}
O(1,3,3)&= T(0,3,0)\cup T(0,3,1)\cup T(0,3,2) \cup T(0,3,3) \\
& \cup T(1,3,0) \cup T(1,3,1) \cup T(1,3,2) \cup T(1,3,3).
\end{align*}
\end{example}

\section{Isomorphic Lattices of Characteristic Subgroups}\label{charlatiso}

We now turn to our main question: When do two finite abelian groups have isomorphic lattices of characteristic subgroups? The following theorems give some fundamental examples of when this can occur:

\begin{thm}\label{corpq}
Let $G=Z_{p^{\lambda_1}}\times Z_{p^{\lambda_2}}\times \cdots \times Z_{p^{\lambda_n}}$ be an abelian $p$-group with $p \neq 2$. Let $q\neq 2$ be any other odd prime, and set $G'=Z_{q^{\lambda_1}}\times Z_{q^{\lambda_2}}\times \cdots \times Z_{q^{\lambda_n}}$. Then $\Char(G) \cong \Char(G')$.
\end{thm}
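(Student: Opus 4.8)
The plan is to reduce the entire comparison to the purely combinatorial lattice $\m C$ of canonical tuples, which carries no reference to the prime at all. Since $p \neq 2$, Corollary \ref{cordist} gives $\Char(G) \cong \m C(G)$; since $q \neq 2$, the same corollary gives $\Char(G') \cong \m C(G')$. Thus it suffices to exhibit a lattice isomorphism between $\m C(G)$ and $\m C(G')$, and for this I would argue that the two lattices are in fact \emph{literally equal}.

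The key observation is that $\m C(G)$ depends only on the exponent tuple $\h\lambda(G)=(\lambda_1,\dots,\lambda_n)$, and not on the prime. Indeed, the ambient poset $\Lambda(G)=\{\h a : \h 0 \leq \h a \leq \h\lambda(G)\}$, together with its order $\leq$ and its operations $\wedge$ and $\vee$, is determined entirely by $\h\lambda(G)$ via the Definition preceding $\Lambda(G)$; and the canonicity conditions (I) and (II) of Definition \ref{defcan} are phrased solely in terms of the entries $a_i$ and the differences $\lambda_{i+1}-\lambda_i$, with no dependence on the characteristic. By hypothesis $G$ and $G'$ share the same exponent tuple $(\lambda_1,\dots,\lambda_n)$, so $\Lambda(G)=\Lambda(G')$ as lattices, and the canonicity conditions carve out the same subset in each; that is, $\m C(G)=\m C(G')$, both as sets and as sublattices. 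Chaining the isomorphisms then yields $\Char(G) \cong \m C(G) = \m C(G') \cong \Char(G')$, as desired.

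As for the main point requiring care: there is no substantive obstacle, since the theorem is in essence an immediate consequence of Corollary \ref{cordist} together with the prime-independence of $\m C$. The only thing one genuinely must verify is that this prime-independence is honest — that neither the order on canonical tuples, nor the meet and join, nor the bijection $\h a \mapsto R(\h a)$ underlying Corollary \ref{cordist}, secretly encode $p$. This is immediate from the definitions (the prime $p$ enters only through the sizes $p^{a_i}$ of cyclic factors, which never appear in the canonicity conditions or the lattice operations on tuples), so the argument requires no computation.
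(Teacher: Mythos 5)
Your proof is correct and follows the same route as the paper's: both reduce via Corollary \ref{cordist} to the lattice of canonical tuples and use the chain $\Char(G) \cong \m C(G) \cong \m C(G') \cong \Char(G')$. Your write-up merely makes explicit the prime-independence of $\m C$ (indeed the literal equality $\m C(G)=\m C(G')$), which the paper leaves implicit in its one-line proof.
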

\begin{proof}
This is immediate from Corollary \ref{cordist} since $\Char(G) \cong \m C(G) \cong \m C(G') \cong \Char(G')$.
\end{proof}

The next theorem shows that, in the case $p\neq 2$, adding a duplicate factor in the direct decomposition of $G$ does not change its lattice of characteristic subgroups.
\begin{thm}\label{duplat}
Let $G=Z_{p^{\lambda_1}}\times Z_{p^{\lambda_2}}\times \cdots \times Z_{p^{\lambda_n}}$ be an abelian $p$-group with $p \neq 2$. Then for any $i\in\{1,\dots,n\}$, $\Char(G)\cong \Char(G \times Z_{p^{\lambda_i}})$.
\end{thm}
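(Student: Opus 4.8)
The plan is to pass from the subgroup lattices to the purely combinatorial lattices of canonical tuples and then exhibit an explicit ``insert a duplicate coordinate'' isomorphism. Since $p\neq 2$, Corollary \ref{cordist} gives $\Char(G)\cong \m C(G)$ and, writing $G'=G\times Z_{p^{\lambda_i}}$, also $\Char(G')\cong \m C(G')$. So it suffices to prove $\m C(G)\cong \m C(G')$ as lattices. The first step is to record how the exponents change: the sorted tuple $\h\lambda(G')$ agrees with $\h\lambda(G)$ except that the value $\lambda_i$ occurs one extra time, so I may assume there are adjacent indices $j,j+1$ with $\mu_j=\mu_{j+1}=\lambda_i$ in $\h\lambda(G')$ (here $\mu$ denotes the sorted exponent tuple of $G'$). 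The key observation is that for any $\h a\in \m C(G')$, condition (I) of Definition \ref{defcan} gives $a_{j+1}\geq a_j$ while condition (II) gives $a_{j+1}-a_j\leq \mu_{j+1}-\mu_j=0$; hence $a_{j+1}=a_j$ always, i.e. the duplicated coordinate is forced to equal its neighbor.

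Next I would define $\Psi:\m C(G')\to \m C(G)$ by deleting coordinate $j+1$, with proposed inverse $\Phi:\m C(G)\to\m C(G')$ inserting a copy of the $j$th coordinate just after it. Checking that $\Psi$ and $\Phi$ land in the canonical tuples of the target and are mutually inverse is the heart of the argument. Every canonicity condition away from the splice carries over verbatim, so the only thing to verify is the single condition that ``merges'' at the deletion point: in $G$ the surviving coordinates of values $a_j$ and $a_{j+2}$ become adjacent, with exponents $\mu_j$ and $\mu_{j+2}$, and conditions (I),(II) there read $a_j\leq a_{j+2}\leq a_j+(\mu_{j+2}-\mu_j)$. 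Using $a_{j+1}=a_j$ and $\mu_{j+1}=\mu_j$, these are exactly the composites of the two $G'$-conditions at coordinates $j+1$ and $j+2$, namely $a_{j+2}\geq a_{j+1}=a_j$ and $a_{j+2}\leq a_{j+1}+(\mu_{j+2}-\mu_{j+1})=a_j+(\mu_{j+2}-\mu_j)$. The analogous check for $\Phi$ is the same telescoping read in reverse.

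Finally, to upgrade the bijection to a lattice isomorphism I would note that $\wedge$ and $\vee$ in both $\m C(G)$ and $\m C(G')$ are computed coordinatewise, being the restrictions of meet and join in the ambient product-of-chains lattice $\Lambda$. Since $\Psi$ merely drops a coordinate that equals one it retains, it both preserves and reflects the order $\leq$, so it is an order isomorphism and hence a lattice isomorphism; composing with the isomorphisms from Corollary \ref{cordist} yields $\Char(G)\cong\Char(G')$. The main obstacle is purely the well-definedness at the splice point: one must confirm that deleting the forced-duplicate coordinate never breaks condition (II) between the two coordinates that become adjacent in $G$, and this is precisely where the equalities $a_{j+1}=a_j$ and $\mu_{j+1}=\mu_j$ do all the work. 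Everything else is routine bookkeeping.
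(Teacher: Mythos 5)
Your proposal is correct and follows essentially the same route as the paper: place the duplicate factor adjacent to the $i$th one, observe that conditions (I) and (II) of canonicity force the duplicated coordinate to equal its neighbor, and conclude that inserting/deleting that coordinate gives a lattice isomorphism between the canonical-tuple lattices (equivalently, between the lattices of regular characteristic subgroups, which by Corollary \ref{cordist} are all of $\Char(G)$ and $\Char(G')$ since $p\neq 2$). Your write-up simply makes explicit the canonicity checks at the splice point and the order-preservation that the paper's terse proof leaves to the reader.
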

\begin{proof}
Let $G'=Z_{p^{\lambda_1}}\times Z_{p^{\lambda_2}}\times \cdots \times Z_{p^{\lambda_{i-1}}} \times Z_{p^{\lambda_i}} \times Z_{p^{\lambda_i}} \times Z_{p^{\lambda_{i+1}}} \times \cdots \times Z_{p^{\lambda_n}}$, so $G' \cong G \times Z_{p^{\lambda_i}}$. Every canonical tuple of $G'$ has the form $(a_1,a_2,\dots,a_{i-1},a_i,a_i,a_{i+1},\dots,a_n)$, i.e., the $i$th and $(i+1)$th coordinates are forced to be equal. It follows that the correspondence
$$R(a_1,\dots,a_n) \mapsto R(a_1,\dots,a_{i-1},a_i,a_i,a_{i+1},\dots,a_n)$$
is an isomorphism of $\Char(G)$ onto $\Char(G')$.
\end{proof}

\begin{thm}\label{thmchain}
The lattice of characteristic subgroups of a finite abelian group $G$ is a chain if and only if $G \cong Z_{p^k}^{\mu_1} \times Z_{p^{k+1}}^{\mu_2}$ for some natural numbers $k, \mu_1,\mu_2 \geq 0$ and some prime $p$.
\end{thm}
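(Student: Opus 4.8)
The plan is to reduce the entire statement to a combinatorial fact about the lattice $\m C(G)$ of canonical tuples. First I would dispose of the multi-prime case: writing $G$ as the direct product of its Sylow subgroups $G_{p_1}\times\cdots\times G_{p_r}$, the observations of \S1 give a lattice isomorphism $\Char(G)\cong \Char(G_{p_1})\times\cdots\times\Char(G_{p_r})$. If two distinct primes divided $|G|$, then two of these factors would each contain a pair $x<y$ (namely the trivial subgroup and the whole Sylow subgroup); choosing $x_1<y_1$ in one factor and $x_2<y_2$ in another produces the incomparable elements $(y_1,x_2)$ and $(x_1,y_2)$, so $\Char(G)$ is not a chain. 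Thus a chain forces $G$ to be a (possibly trivial) $p$-group, and I may write $G=Z_{p^{\lambda_1}}\times\cdots\times Z_{p^{\lambda_n}}$ with $1\le\lambda_1\le\cdots\le\lambda_n$.

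Next I would link $\Char(G)$ to $\m C(G)$. By Theorem \ref{canchar} and Theorem \ref{Rlat}, the map $\h a\mapsto R(\h a)$ is an order-isomorphism of $\m C(G)$ onto the subposet of regular characteristic subgroups of $\Char(G)$. Since every subposet of a chain is a chain, if $\Char(G)$ is a chain then so is $\m C(G)$; and by Corollary \ref{cordist}, when $p\neq 2$ we even have $\Char(G)\cong\m C(G)$. The core claim is therefore purely combinatorial: $\m C(G)$ is a chain if and only if $\max_i\lambda_i-\min_i\lambda_i\le 1$, i.e. every exponent lies in $\{k,k+1\}$ with $k=\lambda_1$, which is exactly the assertion $G\cong Z_{p^k}^{\mu_1}\times Z_{p^{k+1}}^{\mu_2}$.

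For the forward direction of this claim I would argue by contraposition: if $\lambda_n-\lambda_1\ge 2$, I exhibit two incomparable canonical tuples, namely the constant tuple $\h a=(\lambda_1,\dots,\lambda_1)$ and $\h b=(\lambda_1-1,\lambda_2-1,\dots,\lambda_n-1)$. Both are canonical: $\h a$ is constant so conditions (I) and (II) hold trivially, while $\h b$ has $b_i-b_{i-1}=\lambda_i-\lambda_{i-1}$ (so (I) and (II) hold, the latter with equality) and $b_i=\lambda_i-1\ge 0$ since each $\lambda_i\ge 1$. They are incomparable because $b_1=\lambda_1-1<\lambda_1=a_1$ while $b_n=\lambda_n-1\ge\lambda_1+1>\lambda_1=a_n$. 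For the converse, when all $\lambda_i\in\{k,k+1\}$ the increasing order groups the factors into a block of exponent $k$ followed by a block of exponent $k+1$; conditions (I) and (II) force any canonical tuple to be constant on each block, with values $c$ and $d$ subject to $0\le c\le k$ and $c\le d\le c+1$. Two such tuples $(c,d)$ and $(c',d')$ are ordered coordinatewise, and they are always comparable: if $c<c'$ then $d\le c+1\le c'\le d'$, giving $(c,d)\le(c',d')$, while if $c=c'$ then $d,d'\in\{c,c+1\}$ are comparable. Hence $\m C(G)$ is a chain.

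The remaining subtlety, which I expect to be the main obstacle, is the prime $p=2$, where $\Char(G)$ may strictly contain the regular subgroups. Only the backward direction needs extra work: assuming $G=Z_{2^k}^{\mu_1}\times Z_{2^{k+1}}^{\mu_2}$, I must rule out irregular characteristic subgroups, which by Corollary \ref{irreg_exist} reduces to showing that no canonical tuple has two nondegenerate coordinates. Using the block description, every interior coordinate of a canonical tuple is type-(I) degenerate, and a coordinate $i<n$ can avoid type-(II) degeneracy only when $\lambda_{i+1}>\lambda_i$ (since $0\le a_{i+1}-a_i\le\lambda_{i+1}-\lambda_i$), which occurs solely at the unique jump index $i=\mu_1$. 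Thus the only candidates for nondegeneracy are $i=\mu_1$ and $i=n$, and a direct check shows these are never simultaneously nondegenerate: nondegeneracy at $\mu_1$ forces the two block-values to be equal, whereas nondegeneracy at $n$ forces them to differ. Hence every canonical tuple has at most one nondegenerate coordinate, there are no irregular characteristic subgroups, and $\Char(G)\cong\m C(G)$ is a chain. Assembling the four steps gives the theorem: $\Char(G)$ is a chain precisely when $G$ is a $p$-group whose exponents occupy two consecutive values, i.e. $G\cong Z_{p^k}^{\mu_1}\times Z_{p^{k+1}}^{\mu_2}$.
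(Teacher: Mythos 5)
Your proof is correct, and its overall architecture is the same as the paper's: rule out multiple primes via incomparable characteristic subgroups coming from distinct Sylow subgroups; reduce comparability in $\Char(G)$ to comparability of canonical tuples; exhibit two incomparable canonical tuples when $\lambda_n-\lambda_1\geq 2$ (the paper plays $(1,\dots,1)$ against $(\lambda_i-\lambda_1)_i$ where you play $(\lambda_1,\dots,\lambda_1)$ against $(\lambda_i-1)_i$ --- the same device); and, in the converse direction, show that the canonical tuples form a chain, handling $p=2$ through the nondegenerate-coordinate criterion of Corollary \ref{irreg_exist}.

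The one substantive difference favors your write-up. In the converse direction the paper asserts that when $\lambda_n-\lambda_1\leq 1$ every canonical tuple is a string of $0$'s followed by a string of $1$'s; this is false once $k\geq 1$ (for $G=Z_{p^2}\times Z_{p^3}$ the tuples $(1,1)$, $(2,2)$ and $(2,3)$ are all canonical). The correct description is the one you give: a canonical tuple is constant on the block of exponent-$k$ factors and on the block of exponent-$(k+1)$ factors, with values $c$ and $d$ satisfying $0\leq c\leq k$ and $c\leq d\leq c+1$. Because the paper's description of $\m C(G)$ is wrong, both of its final claims --- that the tuples are pairwise comparable, and that each has at most one nondegenerate coordinate (so that Theorem \ref{charirreg} excludes irregular characteristic subgroups when $p=2$) --- need to be re-verified for the true family, and that is exactly what you do: if $c<c'$ then $d\leq c+1\leq c'\leq d'$, giving comparability; and nondegeneracy at the jump index $\mu_1$ forces $d=c$ while nondegeneracy at the last index forces $d=c+1$, so the two candidates for nondegeneracy are never simultaneously nondegenerate. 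So your argument is not just parallel to the paper's proof but repairs a genuine slip in it; the statement of Theorem \ref{thmchain} itself is unaffected.
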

\begin{proof}
First assume the lattice of characteristic subgroups of $G$ is a
chain. If $|G|$ were not a prime power, it would have distinct prime
divisors $p$ and $q$, and the Sylow $p$-subgroup and Sylow
$q$-subgroup of $G$ would be incomparable. So $G$ must be an abelian
$p$-group, and without loss of generality we may write
$G=Z_{p^{\lambda_1}}\times \cdots \times Z_{p^{\lambda_n}}$, where
$1 \leq \lambda_1 \leq \lambda_2 \leq \dots \leq \lambda_n$. Note
that the claim that $G$ has the form $Z_{p^k}^{\mu_1} \times
Z_{p^{k+1}}^{\mu_2}$ is equivalent to the claim that
$\lambda_n-\lambda_1 \leq 1$. So suppose $\lambda_n-\lambda_1 \geq
2$. Define tuples $\h a$ and $\h a'$ by
\begin{align*}
a_i&=1\\
a_i'&=\lambda_i-\lambda_1.
\end{align*}
for all $i=1,\dots,n$. Then it is easy to see that $\h a$ and $\h
a'$ are canonical tuples. Since $a_1=1 > 0 = a_1'$ we have $\h a
\nless \h a'$, while since $a_n=1 < 2 \leq \lambda_n-\lambda_1 =
a_n'$, we have $\h a \ngtr \h a'$. The characteristic subgroups
$R(\h a)$ and $R(\h a')$ are then incomparable, contradicting the
hypothesis. Hence $\lambda_n-\lambda_1 \leq 1$, as desired.

Conversely, suppose $\lambda_n-\lambda_1 \leq 1$. Then every
canonical tuple $\h a\in \m C(G)$ has the form
$$a_i = \begin{cases}0, &\text{ if $i < j$} \\ 1, &\text{ if $i \geq j$} \end{cases}$$
for some natural number $j\geq 0$. In the case $p=2$, since such a
tuple has at most one nondegenerate coordinate, it follows from
Theorem \ref{charirreg} that every characteristic subgroup of $G$ is
regular. (Since $Z_2^k$ has only one projection-surjective subgroup
if $k\in\{0,1\}$, there is a unique characteristic subgroup below
each canonical tuple $\h a$, namely $R(\h a)$.) Since any two such
tuples $\h a$ and $\h a'$ are clearly comparable, it follows that
$R(\h a)$ and $R(\h a')$ are comparable, so $\Char(G)$ is a chain.
\end{proof}

\begin{thm}\label{z25z124}
For any prime $p$, $\Char(Z_{p^2} \times Z_{p^5}) \cong \Char(Z_p \times Z_{p^2}\times Z_{p^4})$.
\end{thm}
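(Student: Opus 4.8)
The plan is to handle the odd and even cases separately, leaning on the combinatorial description of $\Char(G)$ built up above. Write $\h\lambda(Z_{p^2}\times Z_{p^5})=(2,5)$ and $\h\lambda(Z_p\times Z_{p^2}\times Z_{p^4})=(1,2,4)$. By the Miller--Baer--Birkhoff count, the first group has $3\cdot 4=12$ canonical tuples and the second has $2\cdot 2\cdot 3=12$, so $\m C$ of each has twelve elements. For $p\neq 2$, Corollary \ref{cordist} gives $\Char(G)\cong\m C(G)$, so here it suffices to exhibit a lattice isomorphism $\psi\colon \m C(Z_{p^2}\times Z_{p^5})\to\m C(Z_p\times Z_{p^2}\times Z_{p^4})$. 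The explicit map I would use is
\begin{align*}
&(0,0)\mapsto(0,0,0),\ (0,1)\mapsto(0,0,1),\ (1,1)\mapsto(0,0,2),\ (0,2)\mapsto(0,1,1),\\
&(1,2)\mapsto(0,1,2),\ (0,3)\mapsto(1,1,1),\ (1,3)\mapsto(1,1,2),\ (2,2)\mapsto(0,1,3),\\
&(2,3)\mapsto(1,1,3),\ (1,4)\mapsto(1,2,2),\ (2,4)\mapsto(1,2,3),\ (2,5)\mapsto(1,2,4).
\end{align*}
This is a bijection, and one checks (using $|R(\h a)|=p^{\sum a_i}$ from Theorem \ref{Rlat}) that it preserves the rank $\sum a_i$.

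To confirm $\psi$ is an order isomorphism I would list the covering relations of each twelve-element lattice — there are fifteen in each — and verify that $\psi$ carries each cover of the domain to a cover of the codomain. Since both lattices have exactly fifteen covers and $\psi$ is injective, it then restricts to a bijection of covering relations, and a bijection of finite posets preserving covers in both directions is an order isomorphism. This settles the case $p\neq 2$, giving $\Char(Z_{p^2}\times Z_{p^5})\cong\m C(\,\cdot\,)\cong\m C(\,\cdot\,)\cong\Char(Z_p\times Z_{p^2}\times Z_{p^4})$.

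For $p=2$ the irregular characteristic subgroups must be accounted for. Computing the degeneracy of each coordinate, I find that in $Z_{2^2}\times Z_{2^5}$ exactly the tuples $(1,2),(1,3),(2,3),(2,4)$ have two nondegenerate coordinates, and in $Z_2\times Z_{2^2}\times Z_{2^4}$ exactly $(0,1,2),(1,1,2),(1,1,3),(1,2,3)$ do; no tuple of either group has three. Hence, by Theorem \ref{charirreg} and Corollary \ref{irreg_exist}, each group carries exactly four irregular characteristic subgroups (one per such tuple, corresponding to the unique proper projection-surjective subgroup of $Z_2^2$), so $\Char$ of each has $12+4=16$ elements. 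Crucially, $\psi$ maps the four distinguished tuples of the first group bijectively onto those of the second, via $(1,2)\mapsto(0,1,2)$, $(1,3)\mapsto(1,1,2)$, $(2,3)\mapsto(1,1,3)$, $(2,4)\mapsto(1,2,3)$.

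I would then extend $\psi$ to a map $\hat\psi$ on all $16$ elements by sending the irregular subgroup $K_{\h m}$ below $\h m$ to $K_{\psi(\h m)}$. The structural fact to establish is that for a canonical $\h m$ with exactly two nondegenerate coordinates $i<j$, Lemma \ref{degen} makes $\h m-\h e_i$, $\h m-\h e_j$, $\h m-\h e_i-\h e_j$ all canonical, the quotient $R(\h m)/R(\h m-\h e_i-\h e_j)$ is elementary abelian of order $4$, and inside the interval $[R(\h m-\h e_i-\h e_j),R(\h m)]$ — a copy of $M_3$ — the three atoms are precisely $R(\h m-\h e_i)$, $R(\h m-\h e_j)$, and $K_{\h m}$. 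Since $\psi$ already matches the four regular subgroups bounding and filling each such diamond with those of the diamond attached to $\psi(\h m)$, the extension respects these diamonds. The hard part, which is the main obstacle, is the \emph{global} cover check: I must show that each $K_{\h m}$ has no covering relations beyond its own diamond — equivalently, that its only upper cover is $R(\h m)$ and its only lower cover is $R(\h m-\h e_i-\h e_j)$. This requires verifying, from the explicit parametrization in the proof of Theorem \ref{charirreg}, that the four irregular subgroups are pairwise incomparable and that no $K_{\h m}$ is contained in (or contains) any characteristic subgroup outside its diamond; because there are only four such subgroups in each group, this reduces to a finite, if delicate, computation. Once it is done, $\hat\psi$ is a cover-preserving bijection between the two $16$-element lattices and hence the desired isomorphism.
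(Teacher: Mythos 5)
Your proposal is correct and is essentially the paper's own proof: the paper establishes this theorem precisely by the finite enumeration you describe, exhibiting both lattices in Table \ref{tab25} (for $p\neq 2$) and Table \ref{tab25p2} (for $p=2$) and observing that the Hasse diagrams coincide. All of your specific claims check against those tables --- twelve canonical tuples with fifteen covering relations matched by your $\psi$, four irregular characteristic subgroups in each group lying below exactly the tuples you list and matched by $\psi$ --- and the ``global cover check'' you defer is exactly what the $p=2$ diagrams confirm, namely that each $K_{\h m}$ has a unique upper cover $R(\h m)$ and unique lower cover $R(\h m-\h e_i-\h e_j)$ inside its diamond.
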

\begin{proof}
This is clear upon examination of Tables \ref{tab25} and \ref{tab25p2}.
\end{proof}

\begin{table}
\centering
\caption{Characteristic subgroups of $\Char(Z_{p^2} \times Z_{p^5})$ and $\Char(Z_p \times Z_{p^2}\times Z_{p^4})$, $p\neq 2$}\label{tab25}
\begin{tabular}{ll|ll}
\begin{tabular}{|l|l|} \hline
$H_1$&R(0,0)\\ \hline
$H_2$&R(0,1)\\ \hline
$H_3$&R(0,2)\\ \hline
$H_4$&R(0,3)\\ \hline
$H_5$&R(1,1)\\ \hline
$H_6$&R(1,2)\\ \hline
$H_7$&R(1,3)\\ \hline
$H_8$&R(1,4)\\ \hline
$H_9$&R(2,2)\\ \hline
$H_{10}$&R(2,3)\\ \hline
$H_{11}$&R(2,4)\\ \hline
$H_{12}$&R(2,5)\\ \hline
\end{tabular}
&
\begin{tabular}{r}
\psset{xunit=.2cm,yunit=.2cm,labelsep=2.5mm}
\begin{pspicture*}(-10,-5)(10,37)
\Rput[t](0,0){$H_1$}

\Rput[t](0,5){$H_2$}
\psline(0,-.5)(0,1.5)

\Rput[t](-5,10){$H_3$}
\Rput[t](5,10){$H_5$}
\psline(-1.5,4.5)(-3.5,6.5)
\psline(1.5,4.5)(3.5,6.5)

\Rput[t](-5,15){$H_4$}
\Rput[t](5,15){$H_6$}
\psline(-5,9.5)(-5,11.5)
\psline(5,9.5)(5,11.5)
\psline(-3.5,9.5)(3.5,11.5)

\Rput[t](-5,20){$H_7$}
\Rput[t](5,20){$H_9$}
\psline(-5,14.5)(-5,16.5)
\psline(5,14.5)(5,16.5)
\psline(3.5,14.5)(-3.5,16.5)

\Rput[t](-5,25){$H_8$}
\Rput[t](5,25){$H_{10}$}
\psline(-5,19.5)(-5,21.5)
\psline(5,19.5)(5,21.5)
\psline(-3.5,19.5)(3.5,21.5)

\Rput[t](0,30){$H_{11}$}
\psline(-3.5,24.5)(-1.5,26.5)
\psline(3.5,24.5)(1.5,26.5)

\Rput[t](0,35){$H_{12}$}
\psline(0,29.5)(0,31.5)
\end{pspicture*}
\end{tabular}

&
\begin{tabular}{|l|l|} \hline
$H_1'$&R(0,0,0)\\ \hline
$H_2'$&R(0,0,1)\\ \hline
$H_3'$&R(0,0,2)\\ \hline
$H_4'$&R(0,1,1)\\ \hline
$H_5'$&R(0,1,2)\\ \hline
$H_6'$&R(0,1,3)\\ \hline
$H_7'$&R(1,1,1)\\ \hline
$H_8'$&R(1,1,2)\\ \hline
$H_9'$&R(1,1,3)\\ \hline
$H_{10}'$&R(1,2,2)\\ \hline
$H_{11}'$&R(1,2,3)\\ \hline
$H_{12}'$&R(1,2,4)\\ \hline
\end{tabular}
&
\begin{tabular}{r}
\psset{xunit=.2cm,yunit=.2cm,labelsep=2.5mm}
\begin{pspicture*}(-10,-5)(10,37)
\Rput[t](0,0){$H_1'$}

\Rput[t](0,5){$H_2'$}
\psline(0,-.5)(0,1.5)

\Rput[t](-5,10){$H_4'$}
\Rput[t](5,10){$H_3'$}
\psline(-1.5,4.5)(-3.5,6.5)
\psline(1.5,4.5)(3.5,6.5)

\Rput[t](-5,15){$H_7'$}
\Rput[t](5,15){$H_5'$}
\psline(-5,9.5)(-5,11.5)
\psline(5,9.5)(5,11.5)
\psline(-3.5,9.5)(3.5,11.5)

\Rput[t](-5,20){$H_8'$}
\Rput[t](5,20){$H_6'$}
\psline(-5,14.5)(-5,16.5)
\psline(5,14.5)(5,16.5)
\psline(3.5,14.5)(-3.5,16.5)

\Rput[t](-5,25){$H_{10}'$}
\Rput[t](5,25){$H_9'$}
\psline(-5,19.5)(-5,21.5)
\psline(5,19.5)(5,21.5)
\psline(-3.5,19.5)(3.5,21.5)

\Rput[t](0,30){$H_{11}'$}
\psline(-3.5,24.5)(-1.5,26.5)
\psline(3.5,24.5)(1.5,26.5)

\Rput[t](0,35){$H_{12}'$}
\psline(0,29.5)(0,31.5)
\end{pspicture*}
\end{tabular}
\end{tabular}
\end{table}

\begin{table}
\centering
\caption{Characteristic subgroups of $\Char(Z_{p^2} \times Z_{p^5})$ and $\Char(Z_p \times Z_{p^2}\times Z_{p^4})$, $p=2$}\label{tab25p2}
\begin{tabular}{ll|ll}
\begin{tabular}{|l|l|} \hline
$H_1$&R(0,0)\\ \hline
$H_2$&R(0,1)\\ \hline
$H_3$&R(0,2)\\ \hline
$H_4$&R(0,3)\\ \hline
$H_5$&R(1,1)\\ \hline
$H_6$&R(1,2)\\ \hline
$H_7$&R(1,3)\\ \hline
$H_8$&R(1,4)\\ \hline
$H_9$&R(2,2)\\ \hline
$H_{10}$&R(2,3)\\ \hline
$H_{11}$&R(2,4)\\ \hline
$H_{12}$&R(2,5)\\ \hline
$K_1$&$H_2\cup \text{T}$(1,2) \\ \hline
$K_2$&$H_3\cup \text{T}$(1,3) \\ \hline
$K_3$&$H_6\cup \text{T}$(2,3) \\ \hline
$K_4$&$H_7\cup \text{T}$(2,4) \\ \hline
\end{tabular}
&
\begin{tabular}{r}
\psset{xunit=.2cm,yunit=.2cm,labelsep=2.5mm}
\begin{pspicture*}(-7.5,-5)(7.5,37)
\Rput[t](0,0){$H_1$}

\Rput[t](0,5){$H_2$}
\psline(0,-.5)(0,1.5)

\Rput[t](-5,10){$H_3$}
\Rput[t](5,10){$H_5$}
\Rput[t](0,10){$K_1$}
\psline(-1.5,4.5)(-3.5,6.5)
\psline(1.5,4.5)(3.5,6.5)
\psline(0,4.5)(0,6.5)

\Rput[t](-5,15){$H_4$}
\Rput[t](5,15){$H_6$}
\Rput[t](0,15){$K_2$}
\psline(-5,9.5)(-5,11.5)
\psline(5,9.5)(5,11.5)
\psline(-3.5,9.5)(3.5,11.5)
\psline(1.5,9.5)(4.25,11.5)
\psline(-4.25,9.5)(-1.5,11.5)

\Rput[t](-5,20){$H_7$}
\Rput[t](5,20){$H_9$}
\Rput[t](0,20){$K_3$}
\psline(-5,14.5)(-5,16.5)
\psline(5,14.5)(5,16.5)
\psline(3.5,14.5)(-3.5,16.5)
\psline(-1.5,14.5)(-4.25,16.5)
\psline(4.25,14.5)(1.5,16.5)

\Rput[t](-5,25){$H_8$}
\Rput[t](5,25){$H_{10}$}
\Rput[t](0,25){$K_4$}
\psline(-5,19.5)(-5,21.5)
\psline(5,19.5)(5,21.5)
\psline(-3.5,19.5)(3.5,21.5)
\psline(1.5,19.5)(4.25,21.5)
\psline(-4.25,19.5)(-1.5,21.5)

\Rput[t](0,30){$H_{11}$}
\psline(-3.5,24.5)(-1.5,26.5)
\psline(3.5,24.5)(1.5,26.5)
\psline(0,24.5)(0,26.5)

\Rput[t](0,35){$H_{12}$}
\psline(0,29.5)(0,31.5)
\end{pspicture*}
\end{tabular}

&
\begin{tabular}{|l|l|} \hline
$H_1'$&R(0,0,0)\\ \hline
$H_2'$&R(0,0,1)\\ \hline
$H_3'$&R(0,0,2)\\ \hline
$H_4'$&R(0,1,1)\\ \hline
$H_5'$&R(0,1,2)\\ \hline
$H_6'$&R(0,1,3)\\ \hline
$H_7'$&R(1,1,1)\\ \hline
$H_8'$&R(1,1,2)\\ \hline
$H_9'$&R(1,1,3)\\ \hline
$H_{10}'$&R(1,2,2)\\ \hline
$H_{11}'$&R(1,2,3)\\ \hline
$H_{12}'$&R(1,2,4)\\ \hline
$K_1'$&$H_2'\cup \text{T}$(0,1,2)\\ \hline
$K_2'$&$H_4'\cup \text{T}$(1,1,2)\\ \hline
$K_3'$&$H_5'\cup \text{T}$(1,1,3)\\ \hline
$K_4'$&$H_8'\cup \text{T}$(1,2,3)\\ \hline
\end{tabular}
&
\begin{tabular}{r}
\psset{xunit=.2cm,yunit=.2cm,labelsep=2.5mm}
\begin{pspicture*}(-7.5,-5)(7.5,37)
\Rput[t](0,0){$H_1'$}

\Rput[t](0,5){$H_2'$}
\psline(0,-.5)(0,1.5)

\Rput[t](-5,10){$H_4'$}
\Rput[t](5,10){$H_3'$}
\Rput[t](0,10){$K_1$}
\psline(-1.5,4.5)(-3.5,6.5)
\psline(1.5,4.5)(3.5,6.5)
\psline(0,4.5)(0,6.5)

\Rput[t](-5,15){$H_7'$}
\Rput[t](5,15){$H_5'$}
\Rput[t](0,15){$K_2'$}
\psline(-5,9.5)(-5,11.5)
\psline(5,9.5)(5,11.5)
\psline(-3.5,9.5)(3.5,11.5)
\psline(1.5,9.5)(4.25,11.5)
\psline(-4.25,9.5)(-1.5,11.5)

\Rput[t](-5,20){$H_8'$}
\Rput[t](5,20){$H_6'$}
\Rput[t](0,20){$K_3'$}
\psline(-5,14.5)(-5,16.5)
\psline(5,14.5)(5,16.5)
\psline(3.5,14.5)(-3.5,16.5)
\psline(-1.5,14.5)(-4.25,16.5)
\psline(4.25,14.5)(1.5,16.5)

\Rput[t](-5,25){$H_{10}'$}
\Rput[t](5,25){$H_9'$}
\Rput[t](0,25){$K_4'$}
\psline(-5,19.5)(-5,21.5)
\psline(5,19.5)(5,21.5)
\psline(-3.5,19.5)(3.5,21.5)
\psline(1.5,19.5)(4.25,21.5)
\psline(-4.25,19.5)(-1.5,21.5)

\Rput[t](0,30){$H_{11}'$}
\psline(-3.5,24.5)(-1.5,26.5)
\psline(3.5,24.5)(1.5,26.5)
\psline(0,24.5)(0,26.5)

\Rput[t](0,35){$H_{12}'$}
\psline(0,29.5)(0,31.5)
\end{pspicture*}
\end{tabular}
\end{tabular}
\end{table}

\begin{thm}\label{nondist}
The lattice of characteristic subgroups of an abelian 2-group $G$ is distributive if and only if all of its characteristic subgroups are regular.
\end{thm}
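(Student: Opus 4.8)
The plan is to prove the two implications separately. The forward direction (all characteristic subgroups regular $\Rightarrow$ $\Char(G)$ distributive) is essentially a repeat of Corollary \ref{cordist}; the converse I would handle by its contrapositive, exhibiting an explicit $M_3$ (diamond) sublattice whenever an irregular characteristic subgroup exists.

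For the easy direction, suppose every characteristic subgroup of $G$ is regular. By Theorem \ref{canchar} the characteristic subgroups are then exactly the $R(\h a)$ with $\h a\in\m C(G)$, and $\h a\mapsto R(\h a)$ is a bijection of $\m C(G)$ onto $\Char(G)$. Theorem \ref{Rlat}(i,ii) shows this bijection carries $\wedge,\vee$ to $\cap,\langle\,\cdot\,\rangle$, so it is a lattice isomorphism $\m C(G)\cong\Char(G)$. Since $\m C(G)$ is a sublattice of the distributive lattice $\Lambda(G)$ (a direct product of chains), $\Char(G)$ is distributive, exactly as in Corollary \ref{cordist}.

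For the converse I would prove the contrapositive: if $G$ has an irregular characteristic subgroup, then $\Char(G)$ is not distributive. By Corollary \ref{irreg_exist} such a subgroup yields a canonical tuple $\h m$ with at least two nondegenerate coordinates; fix two of them, $i<j$. As nondegenerate coordinates are nonzero, I may form $\h u=\h m-\h e_i$ and $\h v=\h m-\h e_j$, which are canonical by Lemma \ref{degen} (since $\h m$ is nondegenerate at $i$ and at $j$), and $\h w=\h u\wedge\h v=\h m-\h e_i-\h e_j$, canonical as a meet of canonical tuples. Using the surjectivity half of Theorem \ref{charirreg}, let $K$ be the characteristic subgroup below $\h m$ corresponding to the projection-surjective subgroup $\{(\epsilon_k):\epsilon_i=\epsilon_j\}$. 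The crucial step is then to pass to the quotient $Q=R(\h m)/R(\h m')$, where $m_k'=\ul{m_k-1}$: one checks that all five of $R(\h w),R(\h u),R(\h v),K,R(\h m)$ contain $R(\h m')$ and lie in $R(\h m)$, so by the lattice isomorphism theorem it suffices to compute their images in $Q\cong Z_2^l$, viewed as an $\F_2$-vector space with coordinate functionals $\epsilon_k$ indexed by the nonzero coordinates of $\h m$. There meet and join become intersection and sum of subspaces, and the images are $\pi(R(\h m))=Q$, $\pi(R(\h u))=\{\epsilon_i=0\}$, $\pi(R(\h v))=\{\epsilon_j=0\}$, $\pi(K)=\{\epsilon_i=\epsilon_j\}$, and $\pi(R(\h w))=\{\epsilon_i=\epsilon_j=0\}$. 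Since $l\geq 2$, the three middle images are distinct hyperplanes whose pairwise intersections all equal the codimension-$2$ subspace $\pi(R(\h w))$ and whose pairwise sums all equal $Q$. Hence the five subgroups form an $M_3$ in the subgroup lattice; as all five are characteristic, and meets and joins of characteristic subgroups agree with those in the full subgroup lattice, they form an $M_3$ in $\Char(G)$, so $\Char(G)$ is not distributive.

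The main obstacle is the converse, and within it the verification that the five images in $Q$ really are three distinct hyperplanes exhibiting the diamond incidence pattern. The decisive simplification is the passage to $Q\cong\F_2^l$, which converts the potentially intricate description of characteristic subgroups from Theorem \ref{charirreg} into elementary linear algebra, where the functionals $\epsilon_i$, $\epsilon_j$, and $\epsilon_i+\epsilon_j$ transparently yield $M_3$.
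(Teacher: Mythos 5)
Your proof is correct, and although it follows the paper's global strategy---the forward direction via $\Char(G)\cong\m C(G)$ exactly as in Corollary \ref{cordist}, and the converse by exhibiting a diamond $M_3$ consisting of $R(\h m-\h e_i-\h e_j)$, $R(\h m-\h e_i)$, $R(\h m-\h e_j)$, a third irregular characteristic subgroup, and $R(\h m)$---it departs from the paper at precisely the key step, and the departure is substantive. The paper defines the middle irregular subgroup by the explicit formula $K'=R(\h b)\cup T(\h m)$, where $\h b=\h m-\h e_i-\h e_j$, and asserts that $K'$ is an index-$2$ subgroup of $R(\h m)$ containing $R(\h b)$ with index $2$; you instead obtain your $K$ from the surjectivity half of Theorem \ref{charirreg} (the diagonal projection-surjective subgroup $\{\epsilon_i=\epsilon_j\}$) and verify all five incidences by linear algebra in $Q=R(\h m)/R(\h m')\cong\F_2^l$. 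This buys you correctness in full generality: the paper's $K'$ is a subgroup only when $i$ and $j$ are the \emph{only} nonzero coordinates of $\h m$, and it fails to be closed under multiplication whenever $\h m$ has a nonzero degenerate coordinate. For instance, take $G=Z_2\times Z_8\times Z_{16}$ and $\h m=(1,2,3)$, which is nondegenerate at coordinates $1$ and $3$ and (type II) degenerate at coordinate $2$; then $|R(0,2,2)\cup T(1,2,3)|=2^4+2^3=24$ is not a power of $2$, and concretely the product of an element of type $(0,2,2)$ with an element of type $(1,2,3)$ can have type $(1,1,3)$, which lies in neither piece. The correct middle subgroup must also contain the further types that your construction $\pi^{-1}\bigl(\{\epsilon_i=\epsilon_j\}\times\prod_{k\in D}K_k\bigr)$ automatically supplies (here $T(1,0,3)$ and $T(1,1,3)$). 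So your passage to the quotient $\F_2^l$, which the paper's shortcut tries to avoid, is not a stylistic flourish: it repairs a genuine gap in the paper's own proof of this theorem, at the cost only of leaning on machinery (Theorem \ref{charirreg}) that the paper has already established.
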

\begin{proof}
The ``if" part is trivial, since the lattice of regular characteristic subgroups is distributive, being isomorphic to the lattice $\m C(G)$ of canonical tuples. So suppose there is an irregular characteristic subgroup $K$ below a tuple $\h m$. By Corollary \ref{irreg_exist}, there must be at least two distinct nondegenerate coordinates $i$ and $j$ of $\h m$. Define $\h a=\h m-\h e_i$, $\h a'=\h m-\h e_j$, and $\h b=\h m-\h e_i-\h e_j$, where the subtraction is defined component-wise. Then $\h m$, $\h a$, $\h a'$, and $\h b$ are all canonical. Then define $K'=R(\h b)\cup T(\h m)$, so $K'$ is another irregular characteristic subgroup below $\h m$. Since $R(\h a)$, $K'$, and $R(\h a')$ are distinct index 2 subgroups of $R(\h m)$ and each contains $R(\h b)$ as an index 2 subgroup, it follows that $R(\h b), R(\h a), K', R(\h a')$, and $R(\h m)$ form a diamond:

\begin{center}
\psset{xunit=.25cm,yunit=.25cm,labelsep=2.5mm}
\begin{pspicture*}(-15,-5)(15,12)
\Rput[t](0,0){$R(\h b)$}

\Rput[t](0,5){$K'$}
\Rput[t](-5,5){$R(\h a)$}
\Rput[t](5,5){$R(\h a')$}
\psline(0,-.5)(0,1.5)
\psline(-1.5,-.5)(-3.5,1.5)
\psline(1.5,-.5)(3.5,1.5)

\Rput[t](0,10){$R(\h m)$}
\psline(0,6.5)(0,4.5)
\psline(-1.5,6.5)(-3.5,4.5)
\psline(1.5,6.5)(3.5,4.5)
\end{pspicture*}
\end{center}

Thus, $\Char(G)$ is not distributive.
\end{proof}

The next theorem describes explicitly when the above situation does or does not occur:
\begin{thm}\label{noirreg}
Let $G=Z_{2^{\lambda_1}} \times \cdots \times Z_{2^{\lambda_n}}$ be an abelian 2-group, $\lambda_1 \leq \lambda_2 \leq \cdots \leq \lambda_n$. Then $G$ has an irregular characteristic subgroup if and only if there exist indices $i<j$ with $\lambda_j-\lambda_i \geq 2$ such that neither of the factors $Z_{2^{\lambda_i}}$ nor $Z_{2^{\lambda_j}}$ occur repeated in the decomposition of $G$.
\end{thm}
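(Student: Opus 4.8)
The plan is to reduce everything to a statement about canonical tuples via Corollary \ref{irreg_exist}, which says that $G$ has an irregular characteristic subgroup if and only if some tuple in $\m{C}(G)$ has at least two nondegenerate coordinates. So I would reformulate the goal as: \emph{there is a canonical tuple with two nondegenerate coordinates if and only if there are indices $i<j$ with $\lambda_j-\lambda_i\geq 2$ and both $\lambda_i,\lambda_j$ nonrepeated.} The first thing I would establish is that a coordinate can be nondegenerate only when its factor is nonrepeated: if $\lambda_i=\lambda_{i-1}$ then canonicity forces $a_i=a_{i-1}$, so condition (I) of degeneracy holds, and if $\lambda_i=\lambda_{i+1}$ then canonicity forces $a_{i+1}=a_i$, so condition (II) holds. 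Hence every coordinate lying in a repeated block is degenerate in \emph{every} canonical tuple, so the two nondegenerate coordinates we seek must be singleton positions; thus ``nonrepeated factor'' in the statement is exactly ``coordinate that can be nondegenerate.''

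For the forward direction, suppose a canonical tuple $\h{a}$ has nondegenerate coordinates $i<j$; by the above both are singletons, so it remains to force $\lambda_j-\lambda_i\geq 2$, and I would split into two cases. If $j>i+1$, then there is a coordinate strictly between them, and since singletons force strict increases of $\lambda$ on both sides, $\lambda_i<\lambda_{i+1}\leq\lambda_{j-1}<\lambda_j$, giving $\lambda_j-\lambda_i\geq 2$ automatically. If $j=i+1$, I read off the two defining inequalities: the failure of condition (II) at $i$ gives $a_{i+1}-a_i<\lambda_{i+1}-\lambda_i$, while the failure of condition (I) at $j=i+1$ gives $a_{i+1}-a_i\geq 1$; combining these yields $1\leq\lambda_{i+1}-\lambda_i-1$, i.e. $\lambda_j-\lambda_i\geq 2$.

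For the converse, suppose $i<j$ are singleton coordinates with $\lambda_j-\lambda_i\geq 2$; I would exhibit a canonical tuple making both $i$ and $j$ nondegenerate. When $j>i+1$ I take $\h{a}$ with $a_k=0$ for $k<i$, $a_k=1$ for $i\leq k<j$, and $a_k=2$ for $k\geq j$; the only positive increments occur at $i$ and $j$, each of size $1$ and permissible since $\lambda_i>\lambda_{i-1}$ and $\lambda_j>\lambda_{j-1}$, and one checks directly from the definitions that $i$ and $j$ are nondegenerate. When $j=i+1$ I instead take $a_k=0$ for $k<i$, $a_i=1$, and $a_k=2$ for $k\geq i+1$; here the single increment $a_{i+1}-a_i=1$ must simultaneously be $\geq 1$ (nondegeneracy of $j$) and $\leq\lambda_{i+1}-\lambda_i-1$ (nondegeneracy of $i$), which is exactly what $\lambda_j-\lambda_i\geq 2$ provides. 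The boundary case $j=n$ carries no condition (II) at $j$ and is covered by the same tuples.

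I expect the main obstacle to be the adjacency dichotomy: the subtlety is that two singleton coordinates can each be \emph{individually} nondegenerate in some tuple yet never \emph{simultaneously} so, and the hypothesis $\lambda_j-\lambda_i\geq 2$ is binding only when $i$ and $j$ are adjacent, where the single shared increment $a_{i+1}-a_i$ must fit strictly between $1$ and $\lambda_{i+1}-\lambda_i$. Isolating this as the crux, and observing that for non-adjacent singletons the gap condition comes for free, is what makes the equivalence clean; the remainder is routine bookkeeping with the canonicity inequalities.
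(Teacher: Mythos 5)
Your proposal is correct and follows essentially the same route as the paper's proof: reduction via Corollary \ref{irreg_exist}, the same $0/1/2$-valued tuple for the converse, and the same degeneracy inequalities with an adjacency case split for the forward direction. If anything, your explicit handling of the adjacent case $j=i+1$ in the converse---where the hypothesis $\lambda_j-\lambda_i\geq 2$ is genuinely needed (beyond the nonrepetition of $\lambda_i,\lambda_j$) to make coordinate $i$ nondegenerate---is more careful than the paper's, which cites only the strict inequalities $\lambda_{i-1}<\lambda_i<\lambda_{i+1}$ and $\lambda_{j-1}<\lambda_j<\lambda_{j+1}$ at that step.
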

\begin{proof}
By Corollary \ref{irreg_exist}, this is equivalent to there being a canonical tuple $\h a\in\m C(G)$ with at least two nondegenerate coordinates. First assume there exist $i<j$ with $\lambda_j-\lambda_i \geq 2$ such that neither $\lambda_i$ nor $\lambda_j$ are repeated. Consider the tuple $\h a$ given by
$$a_k=\begin{cases}
0 & \text{for $k<i$} \\
1 & \text{for $i\leq k<j$} \\
2 & \text{for $k\geq j$}
\end{cases}$$
Since $\lambda_i$ and $\lambda_j$ are not repeated, we have $\lambda_{i-1}<\lambda_i$ and $\lambda_{j-1}<\lambda_j$, which ensures that $\h a$ is canonical. Likewise, we have $\lambda_i<\lambda_{i+1}$ and $\lambda_j<\lambda_{j+1}$ (provided $j+1\leq n$), which ensures that $\h a$ is nondegenerate at coordinates $i$ and $j$, as desired.

Now assume, conversely, that there is a canonical tuple $\h a\in\m C(G)$ with at least two nondegenerate coordinates $i$ and $j$. Without loss of generality, $i<j$. Since $\h a$ is nondegenerate at $i$ and $j$, $\lambda_i$ and $\lambda_j$ must not be repeated. Since $\h a$ is nondegenerate at $i$, we have
$$\lambda_{i+1}-\lambda_i > a_{i+1}-a_i \geq 0,$$
while since $\h a$ is nondegenerate at $j$, we have
$$\lambda_j-\lambda_{j-1} \geq a_j-a_{j-1} > 0.$$
If $j-i>2$, then this yields
$$\lambda_j-\lambda_i \geq (\lambda_j-\lambda_{j-1})+(\lambda_{i+1}-\lambda_i)\geq 2.$$
The only case remaining is $j-i=1$. But in this case also we must have $\lambda_j-\lambda_i\geq 2$, since otherwise the only remaining option would be to have $\lambda_{i+1}-\lambda_i=1$, hence either $a_{i+1}=a_i$ or $a_{i+1}=a_i+1$, which would imply $\h a$ is degenerate at $i+1$ or $i$, respectively.
\end{proof}

\begin{thm}\label{unimin}
Let $G$ be a nontrivial abelian $p$-group. Then $G$ has a unique minimal nontrivial characteristic subgroup.
\end{thm}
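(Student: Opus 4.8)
The candidate for the unique minimal nontrivial characteristic subgroup is $R(\h u)$, where $\h u$ is the tuple with $u_i=1$ if $\lambda_i=\lambda_n$ and $u_i=0$ otherwise. First I would check that $\h u$ is canonical (it is $0$ on an initial segment and jumps to $1$ exactly where $\lambda$ first reaches its maximum, so conditions (I) and (II) of Definition \ref{defcan} hold) and that $\h u\neq\h 0$; then $R(\h u)$ is a genuine nontrivial characteristic subgroup by Theorem \ref{canchar}. The whole statement reduces to one claim: \emph{$R(\h u)$ is contained in every nontrivial characteristic subgroup of $G$}, for this makes $R(\h u)$ the least, hence the unique minimal, nontrivial element of $\Char(G)$. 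When $p\neq 2$ the claim is quick: by Corollary \ref{cordist} it suffices to show $\h u$ is the least nonzero element of $\m C(G)$, and indeed any nonzero canonical tuple $\h a$ must have $a_k\geq 1$ at every coordinate $k$ with $\lambda_k=\lambda_n$ (if such a coordinate were $0$, conditions (I) and (II) together would force $\h a=\h 0$), so $\h a\geq\h u$.

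The real content is the case $p=2$, where irregular characteristic subgroups intervene, so I would prove the claim uniformly for all $p$. Let $H$ be a nontrivial characteristic subgroup. Choosing any $x\in H\setminus\{1\}$ and passing to a suitable power, $H$ contains an element of order $p$, and since $H$ is a union of automorphism classes it contains the entire class $O(\h a)$ of that element, for some nonzero tuple $\h a$ whose entries lie in $\{0,1\}$. Replacing $\h a$ by its canonical form via Theorem \ref{unican} (again with entries in $\{0,1\}$, as automorphisms preserve order) I may assume $\h a$ is canonical, with support a final segment $T=\{i:a_i=1\}$ by condition (I). The observation of the previous paragraph then shows that $T$ contains every coordinate $k$ with $\lambda_k=\lambda_n$.

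It remains to show $t_{k,1}\in\langle O(\h a)\rangle\subseteq H$ for each such top coordinate $k$, since this gives $R(\h u)=\langle t_{k,1}:\lambda_k=\lambda_n\rangle\subseteq H$. This is the main step, and the point where $p=2$ makes things delicate: for $p=2$ the single type $T(\h a)$ may consist of just one element, so $\langle T(\h a)\rangle$ need not contain the individual generators $t_{k,1}$, and one must instead exploit that the \emph{whole class} $O(\h a)$ splits into several types. Concretely, I would verify that $\h a$ is degenerate at each such $k$: if $k<n$ this is degeneracy of type (II) (since $\lambda_{k+1}=\lambda_k$ and $k+1\in T$), while if $k=n$ it is degeneracy of type (I) unless $T=\{n\}$, in which case $t_{n,1}=\prod_{i\in T}t_{i,1}\in T(\h a)$ outright. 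By Lemma \ref{degen}, degeneracy yields $O(\h a-\h e_k)=O(\h a)$, so both $g=\prod_{i\in T}t_{i,1}$ and $g'=\prod_{i\in T\setminus\{k\}}t_{i,1}$ lie in $O(\h a)\subseteq H$; then $g(g')^{-1}=t_{k,1}\in H$, which completes the argument.
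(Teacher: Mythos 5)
Your proof is correct, and for the crucial case it takes a genuinely different route from the paper. Both arguments use the same candidate subgroup (your $R(\h u)$ is the paper's $R(\h r)$, with $1$'s exactly on the coordinates of maximal $\lambda$), and for $p\neq 2$ both reduce to the observation that any nonzero canonical tuple dominates $\h u$. The divergence is at $p=2$: the paper stays inside the machinery of Theorem \ref{charirreg}, writing $H$ as a characteristic subgroup below a canonical $\h m$, using the clipped tuple $\h m'$ (with $m_i'=\ul{m_i-1}$) to get $R(\h m')\subseteq H$ when $m_s\geq 2$, and invoking Corollary \ref{irreg_exist} to force $H=R(\h m)$ when $m_s=1$. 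You instead argue uniformly in $p$ directly from the definition of characteristic: extract an order-$p$ element of $H$, pass to the canonical form of its class (a $0/1$ tuple whose support is a final segment containing the top block), and use degeneracy via Lemma \ref{degen} to place both $g=\prod_{i\in T}t_{i,1}$ and $g'=\prod_{i\in T\setminus\{k\}}t_{i,1}$ in $O(\h a)\subseteq H$, so that $g(g')^{-1}=t_{k,1}\in H$. This quotient trick is the right replacement for the paper's squaring trick ($gg'=t_{i,m_i}^2$), which is exactly what degrades at $p=2$; your version never squares anything, which is why it works for all primes. What each approach buys: yours is self-contained, needing only Theorems \ref{unican} and \ref{canchar} and Lemma \ref{degen} (and it makes your separate $p\neq 2$ paragraph logically redundant, though it is a nice sanity check); the paper's is shorter in context because it reuses Theorems \ref{charreg} and \ref{charirreg}, which it has to develop anyway for the classification of characteristic subgroups of $2$-groups.
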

\begin{proof}
Write
$$G=Z_{2^{\lambda_1}}^{\alpha_1} \times Z_{2^{\lambda_2}}^{\alpha_2}\times \cdots\times Z_{2^{\lambda_n}}^{\alpha_n}$$
where
$$0 < \lambda_1 < \lambda_2 < \cdots < \lambda_n,$$
$$\alpha_1,\dots,\alpha_n \geq 1.$$
Set
$$s=\alpha_1+\alpha_2+\dots+\alpha_n.$$
Define a canonical tuple $\h r=(0,\dots,0,1,\dots,1)$, where the number of 1's is $\alpha_n$.
We claim that $R(\h r)$ is the minimum nontrivial characteristic subgroup of $G$. Let $H$ be any nontrivial characteristic subgroup of $G$. We will show $R(\h r) \subseteq H$. First suppose $p\neq 2$. Then $H=R(\h a)$ for some canonical tuple $\h a$ (by Theorem \ref{charreg}), and we must have the last coordinate $a_s \geq 1$ since otherwise condition (I) of Definition \ref{defcan} would imply $\h a=0$, i.e. $H=1$, a contradiction. Condition (II) of Definition \ref{defcan} now implies $a_i=a_s\geq 1$ for $i\in\{s-\alpha_n+1,\dots,s\}$. Thus $\h r \leq \h a$, hence $R(\h r) \subseteq H$.

So we may assume $p=2$. Now $H$ is a characteristic subgroup below some canonical tuple $\h m$. As in the proof of Theorem \ref{charirreg}, we know $R(\h m') \subseteq H$, where $\h m'$ is defined by $m_i'=\ul{m_i-1}$. If $m_s \geq 2$, then $m_s' \geq 1$, and $R(\h r) \subseteq R(\h m') \subseteq H$ as above. So we must have $m_s=1$. Then $\h m$ is of the form $\h m=(0,\dots,0,1,\dots,1)$. Since such a tuple has at most 1 nondegenerate coordinate, there are no irregular characteristic subgroups below $\h m$ (by Theorem \ref{irreg_exist}), so $H$ is regular and $H=R(\h m)$. Then, as above, $\h r \leq \h m$, hence $R(\h r) \subseteq H$, as desired.
\end{proof}

\begin{thm}\label{indecomp}
Let $G$ be an abelian $p$-group. Then $\Char(G)$ is directly indecomposable, i.e. it is not isomorphic to a direct product of two nontrivial lattices.
\end{thm}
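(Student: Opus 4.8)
The plan is to exploit the unique minimal nontrivial characteristic subgroup guaranteed by Theorem \ref{unimin}. The key lattice-theoretic fact I would use is that a direct product of two nontrivial (bounded) lattices always has at least two distinct atoms, one contributed by each factor. Since Theorem \ref{unimin} asserts that $\Char(G)$ has exactly one atom (namely its unique minimal nontrivial characteristic subgroup), no such product decomposition can exist. Thus the whole argument reduces to a short observation about atoms in product lattices, together with a citation of Theorem \ref{unimin}.

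First I would dispose of the trivial case. If $G$ is trivial, then $\Char(G)$ is a one-element lattice, and this cannot be isomorphic to a product $L_1\times L_2$ of two nontrivial lattices, since any such product has more than one element. So I may assume $G$ is nontrivial and suppose, for contradiction, that $\Char(G)\cong L_1\times L_2$ with both $L_1$ and $L_2$ nontrivial. Because $\Char(G)$ is finite, each $L_i$ is a finite lattice with more than one element; hence each has a least element $\hat 0_i$ and at least one atom $a_i$ (an element covering $\hat 0_i$).

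Next I would verify the atom claim directly in the product. The least element of $L_1\times L_2$ is $(\hat 0_1,\hat 0_2)$, and I would check that $(a_1,\hat 0_2)$ and $(\hat 0_1,a_2)$ are both atoms: if $(\hat 0_1,\hat 0_2)<(x,y)\le(a_1,\hat 0_2)$, then $y=\hat 0_2$ and $\hat 0_1<x\le a_1$, which forces $x=a_1$ since $a_1$ is an atom; the other case is symmetric. These two atoms are distinct, as they differ in their first coordinate ($a_1\neq\hat 0_1$). Transporting back along the isomorphism, $\Char(G)$ would then contain two distinct minimal nontrivial characteristic subgroups, contradicting Theorem \ref{unimin}. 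Hence $\Char(G)$ admits no decomposition into two nontrivial lattices, i.e.\ it is directly indecomposable.

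The proof is short once Theorem \ref{unimin} is available; the only genuine content is the elementary fact that an atom of a product lattice is supported on a single factor. I do not anticipate a serious obstacle, only some care in the bookkeeping at the bottom of the lattice: correctly identifying the least element of the product and confirming that each nontrivial factor really does contribute an atom (which uses only finiteness), as well as remembering to treat the trivial group separately.
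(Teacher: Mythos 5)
Your proof is correct and follows essentially the same route as the paper: the paper likewise cites Theorem \ref{unimin} to get a unique atom in $\Char(G)$ and then invokes the fact that a decomposable lattice must have at least two atoms. You merely spell out that elementary fact (and the trivial-group case) in more detail, which is fine but not a different argument.
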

\begin{proof}
Theorem \ref{unimin} says that $G$ has a unique minimal nontrivial characteristic subgroup, i.e. $\Char(G)$ has a unique atom. Since a decomposable lattice must have at least two atoms, the result follows.
\end{proof}

\section{Main result}

If $G$ is any finite abelian group, with Sylow subgroups $G_{p_1}, G_{p_2},\dots,G_{p_k}$, then by Theorem \ref{indecomp},
$$\Char(G) = \Char(G_{p_1}) \times \Char(G_{p_2}) \times \cdots \times \Char(G_{p_k})$$
is a decomposition of $\Char(G)$ into directly indecomposable sublattices. Suppose $G'$ is another finite abelian group, with Sylow subgroups $G_{p_1}', G_{p_2}', \dots, G_{p_l}'$, and that $\Char(G) \cong \Char(G')$. By the uniqueness of direct decompositions \cite[Corollary III.4.4]{gratzer}, we must have $k=l$ and, applying a reordering of the factors if necessary, $\Char(G_{p_i}) \cong \Char(G_{p_i}')$, for each $i\in\{1,\dots,k\}$. Thus the problem of determining when two finite abelian groups $G$ and $G'$ have isomorphic lattices of characteristic subgroups is completely reduced to the $p$-group case, i.e., we may without loss of generality assume $G$ is a $p$-group and $G'$ is a $q$-group. We then have three cases: If $p \neq 2$ and $q \neq 2$, then by Corollary \ref{corpq} we may without loss of generality assume $p=q$. This case is considered in our Main Theorem, proven below. If $p=2$ and $q \neq 2$, then $\Char(G)$ must be a distributive lattice since, by Corollary \ref{cordist}, $\Char(G')$ is; hence $G$ must have no irregular characteristic subgroups, by Theorem \ref{nondist}. The situation under which this occurs is described by Theorem \ref{noirreg} above. The last case $p=q=2$ is more complicated. We have not yet been able to obtain a complete solution for this case.

For the remainder of the paper, we consider the case $p=q\neq 2$. As usual, write
$$G=Z_{p^{\lambda_1}}\times Z_{p^{\lambda_2}}\times \cdots \times Z_{p^{\lambda_n}}.$$
By Theorem \ref{duplat}, we may without loss of generality assume that there are no duplicate factors in this decomposition of $G$, i.e. we may assume that $0 < \lambda_1 < \lambda_2 < \cdots < \lambda_n$. Likewise, as in the statement of the Main Theorem, write
$$H=Z_{p^{\mu_1}}\times Z_{p^{\mu_2}}\times \cdots \times Z_{p^{\mu_m}},$$
where, without loss of generality, we may assume $n \geq m$.

Recall that an element $x$ of a finite lattice $\mathcal{L}$ is \emph{join-irreducible} if $x$ is not the bottom element of $\mathcal{L}$ and there do not exist $y,z\in\mathcal{L}$ with $x=y\vee z$.
\begin{defn}
We denote the partially ordered set of join-irreducible elements of $\Char(G)$ by $J(G)$.
\end{defn}
Clearly, if $\Char(G)\cong\Char(H)$, then $J(G)\cong J(H)$ as partially ordered sets (and in fact, since $\Char(G)$ and $\Char(H)$ are distributive lattices, the converse of this is also true, although we will not  need to use this.) Our basic strategy for proving the Main Theorem will be to gather structural information about $J(G)$, which, it turns out, is considerably less complicated than $\Char(G)$ in certain respects. This structural information will lead us to numerical invariants on $\lambda(G)$ which will enable us to prove that $G=H$, with the exceptions stated in the theorem.
\begin{thm}\label{onedegen}
A subgroup $R(\h a)\in\Char(G)$ is a join-irreducible element of $\Char(G)$ if and only if $\h a$ has precisely one nondegenerate coordinate.
\end{thm}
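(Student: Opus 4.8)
The plan is to work entirely inside the lattice $\m C(G)$ of canonical tuples. By Corollary \ref{cordist} the map $\h a\mapsto R(\h a)$ is a lattice isomorphism $\m C(G)\to\Char(G)$, and by Theorem \ref{Rlat}(ii) the join in $\Char(G)$ corresponds to the coordinatewise maximum $\vee$ on tuples (indeed $R(\h b)\vee R(\h c)=\langle R(\h b),R(\h c)\rangle=R(\h b\vee\h c)$). Hence $R(\h a)$ is join-irreducible in $\Char(G)$ if and only if $\h a$ is join-irreducible in $\m C(G)$, and the whole statement becomes a purely combinatorial claim about canonical tuples, with joins computed componentwise.

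The key step, on which everything rests, is the lemma: \emph{if $\h c,\h a\in\m C(G)$ with $\h c<\h a$, then there is a nondegenerate coordinate $i$ of $\h a$ with $c_i<a_i$.} To prove it I would set $D=\{j:c_j<a_j\}\neq\emptyset$ and single out $i=\max\{j\in D: a_j>a_{j-1}\}$; this set is nonempty because its least element $j_0=\min D$ satisfies $a_{j_0-1}=c_{j_0-1}\le c_{j_0}<a_{j_0}$. The choice of $i$ gives $a_i>a_{i-1}$, so coordinate $i$ is not type (I) degenerate, and the remaining work is to rule out type (II) degeneracy, i.e.\ to show $a_{i+1}-a_i<\lambda_{i+1}-\lambda_i$ when $i<n$. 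I would split on whether $c_{i+1}=a_{i+1}$ or $c_{i+1}<a_{i+1}$: in the first case canonicity of $\h c$ gives $a_{i+1}-a_i<c_{i+1}-c_i\le\lambda_{i+1}-\lambda_i$ (using $c_i<a_i$); in the second case $i+1\in D$ and the maximality of $i$ forces $a_{i+1}=a_i$, so the gap is $0<\lambda_{i+1}-\lambda_i$. Either way coordinate $i$ is nondegenerate. Getting this extremal choice of $i$ exactly right, so that maximality can be exploited in the second case, is the main obstacle.

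Granting the lemma, both directions fall out quickly. Applying it with $\h c=\h 0$ shows that every nonzero canonical tuple has at least one nondegenerate coordinate, so a tuple with no nondegenerate coordinate must be $\h 0$, the bottom of $\m C(G)$, which is not join-irreducible. If $\h a$ has two nondegenerate coordinates $i\neq j$, then each $a_i,a_j\ge 1$, so by Lemma \ref{degen} both $\h a-\h e_i$ and $\h a-\h e_j$ are canonical; they lie strictly below $\h a$ and their componentwise maximum is $\h a$, so $\h a$ is join-reducible. This gives the ``only if'' direction by contraposition. For the ``if'' direction, suppose $\h a$ has a unique nondegenerate coordinate $i$ and write $\h a=\h b\vee\h c$ with $\h b,\h c\le\h a$. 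If neither $\h b$ nor $\h c$ equalled $\h a$, the lemma applied to $\h b<\h a$ and to $\h c<\h a$ would produce nondegenerate coordinates of $\h a$ where each is strictly smaller; by uniqueness both such coordinates are $i$, giving $b_i<a_i$ and $c_i<a_i$ and contradicting $a_i=\max(b_i,c_i)$. Hence $\h a\in\{\h b,\h c\}$, and since $a_i>a_{i-1}\ge 0$ shows $\h a\neq\h 0$, the tuple $\h a$ is join-irreducible.
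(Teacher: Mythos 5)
Your proof is correct, and on the harder (``if'') direction it takes a genuinely different route from the paper. The ``only if'' direction is essentially the paper's own argument: two nondegenerate coordinates give, via Lemma \ref{degen}, the two canonical tuples $\h a-\h e_i$ and $\h a-\h e_j$ strictly below $\h a$ whose join is $\h a$; and the all-degenerate case forces $\h a=\h 0$ (the paper deduces this by a direct induction along the coordinates, you get it from your lemma with $\h c=\h 0$). For the converse, the paper needs exactly the claim that every canonical $\h b<\h a$ satisfies $b_i<a_i$ at the unique nondegenerate coordinate $i$, and it proves this by group-theoretic means: if $b_i=a_i$, then Theorem \ref{osplit} gives $T(\h b)\subseteq O(\h a)$, so $O(\h b)=O(\h a)$, contradicting the uniqueness of canonical types (Theorem \ref{unican}). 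You instead prove the stronger, purely combinatorial lemma that any strict inequality $\h c<\h a$ in $\m C(G)$ is witnessed at some nondegenerate coordinate of $\h a$, via the extremal choice $i=\max\{j: c_j<a_j \text{ and } a_j>a_{j-1}\}$; your case analysis (canonicity of $\h c$ when $c_{i+1}=a_{i+1}$, maximality of $i$ together with $\lambda_{i+1}>\lambda_i$ when $c_{i+1}<a_{i+1}$) is sound, and both routes use the standing hypotheses of \S4 in the same places ($p\neq 2$ for the identification $\Char(G)\cong\m C(G)$ of Corollary \ref{cordist} and Theorem \ref{Rlat}, distinctness of the $\lambda_k$ in your Case B and in the hypothesis of Theorem \ref{osplit}). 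What your approach buys: it stays entirely inside the tuple lattice, and since Theorem \ref{onedegen} is in fact the only place the paper ever invokes Theorem \ref{osplit}, your argument would make that theorem (one of the more delicate inductions of \S2) dispensable for the Main Theorem; your lemma is also a reusable order-theoretic fact about $\m C(G)$. What the paper's approach buys: with Theorem \ref{osplit} already in hand, its converse argument is a few lines, whereas yours must build the extremal-coordinate machinery from scratch.
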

\begin{proof}
Assume first that $R(\h a)\in\Char(G)$ is join-irreducible, and suppose $\h a$ has two distinct nondegenerate coordinates $i$ and $j$. Define $\h b$ by $\h b=\h a-\h e_i$, i.e. $\h b$ is the same as $\h a $ except the $i$th coordinate is decreased by one. Likewise define $\h c=\h a-\h e_j$. Then $\h b$ and $\h c$ are both canonical by Lemma \ref{degen}. We have $R(\h a)=R(\h b\vee\h c)=\langle R(\h b),R(\h c)\rangle$ where $R(\h b) \subset R(\h a)$ and $R(\h c) \subset R(\h a)$ are proper subsets, so that $R(\h a)$ is join-reducible, contrary to assumption. If, on the other hand, $\h a$ has no nondegenerate coordinates, then $a_n=a_{n-1}=a_{n-2}=\cdots=a_1=a_0=0$, i.e. $\h a=\h 0$ and $R(\h a)$ is trivial, which again contradicts the join-irreducibility of $R(\h a)$. Thus $R(\h a)$ must have precisely one nondegenerate coordinate.

Conversely, assume that $\h a$ has precisely one nondegenerate coordinate $i$. Define $\h a'=\h a-\h e_i$. Given any canonical tuple $\h b < \h a$, we claim that we must have $\h b \leq \h a'$, i.e. $b_i < a_i$; for otherwise, Theorem \ref{osplit} implies $O(\h b)=O(\h a)$, contradicting Theorem \ref{unican}. From this it follows that $R(\h a)$ is join-irreducible, for if $R(\h a)=\langle R(\h b),R(\h b')\rangle$ for proper subgroups $R(\h b)$ and $R(\h b')$ of $R(\h a)$, then $\h a=\h b\vee \h b'$ with $\h b<\h a$ and $\h b'<\h a$, hence $\h b\leq \h a'$ and $\h b'\leq \h a'$, which implies $\h b\vee\h b' \leq \h a'<\h a$, a contradiction.
\end{proof}
\begin{defn}\label{jdef}
Given $i\in\{1,\dots,n\}$ and $j\in\{1,\dots,\lambda_i\}$, we define $J(i,j)$ to be the characteristic subgroup $R(\h a)$ where
$$a_k=\begin{cases}j, & \text{if $k \geq i$}, \\ \ul{j-(\lambda_i-\lambda_k)}, & \text{if $k < i$.} \end{cases}$$
We use the notation $\ul{x}$, meaning
$$\ul{x}=\begin{cases}x, & \text{if $x\geq 0$} \\ 0, & \text{otherwise.}\end{cases}$$
We call $a_1,\dots,a_n$ the \emph{entries} of $J(i,j)$. When clarity requires us to specify the group $G$, we will write $J_G(i,j)$.
\end{defn}
\begin{example}
Let $G=Z_{p^1}\times Z_{p^3} \times Z_{p^5}\times Z_{p^7}$. Then $$J(3,3)=R(0,1,3,3).$$
\end{example}
\begin{thm}\label{jjoin}
The join-irreducible elements of $\Char(G)$ are precisely the elements $J(i,j)$.
\end{thm}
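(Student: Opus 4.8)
The plan is to combine the characterization of join-irreducibles from Theorem \ref{onedegen} with a direct analysis of the entries of a canonical tuple. Since we are in the case $p\neq 2$, Theorem \ref{charreg} guarantees that every characteristic subgroup is $R(\h a)$ for a unique canonical tuple $\h a$, and Theorem \ref{onedegen} says $R(\h a)$ is join-irreducible exactly when $\h a$ has precisely one nondegenerate coordinate. So the theorem reduces to showing that the canonical tuples with exactly one nondegenerate coordinate are precisely the defining tuples of the subgroups $J(i,j)$ of Definition \ref{jdef}.

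First (the easy inclusion) I would check that for each admissible pair $(i,j)$ the defining tuple $\h a$ of $J(i,j)$ is canonical with coordinate $i$ as its unique nondegenerate coordinate. Writing $f(k)=j-(\lambda_i-\lambda_k)$, so that $a_k=\ul{f(k)}$ for $k\le i$ and $a_k=j$ for $k\ge i$, the strict monotonicity of $\lambda$ makes $\h a$ weakly increasing, and condition (II) of Definition \ref{defcan} is verified directly by splitting into the ranges where $f$ is clipped and where it is positive. For the degeneracy count: every coordinate $k>i$ is type (I) degenerate since $a_k=a_{k-1}=j$; every coordinate $k<i$ is type (I) degenerate when $a_k=0$ and type (II) degenerate (the bound $a_{k+1}-a_k=\lambda_{k+1}-\lambda_k$ is sharp) when $a_k>0$; and coordinate $i$ is nondegenerate because $a_i=j>a_{i-1}$ while $a_{i+1}-a_i=0<\lambda_{i+1}-\lambda_i$ (the latter vacuous if $i=n$). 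By Theorem \ref{onedegen}, $J(i,j)$ is join-irreducible.

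The substance is the converse: given a canonical tuple $\h a$ whose unique nondegenerate coordinate is $i$, I must show $\h a$ is the defining tuple of $J(i,a_i)$, which in particular forces $j=a_i\in\{1,\dots,\lambda_i\}$ using $a_i>a_{i-1}\ge 0$ and $a_i\le\lambda_i$. I would split into the coordinates $\ge i$ and those $<i$. For $k\ge i$, a short downward-induction argument shows $a_i=a_{i+1}=\cdots=a_n$: taking the largest $l\in\{i,\dots,n-1\}$ with $a_{l+1}>a_l$ would make coordinate $l+1$ fail both degeneracy conditions (type (II) fails because $a_{l+2}=a_{l+1}$ by maximality, or because $l+1=n$; type (I) fails because $a_{l+1}>a_l$), contradicting that $i$ is the only nondegenerate coordinate.

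The crux, and the step I expect to be the main obstacle, is the leftward analysis for $k<i$. The canonicality bound summed from $k$ to $i-1$ immediately gives $a_k\ge\ul{j-(\lambda_i-\lambda_k)}$, so I only need the reverse inequality, namely that whenever $a_k>0$ the intermediate steps are all sharp, $a_{l+1}-a_l=\lambda_{l+1}-\lambda_l$ for $k\le l<i$. I would establish the key claim that every coordinate $l<i$ with $a_l>0$ is type (II) degenerate by contradiction: if not, then being degenerate it is type (I) degenerate with $a_l=a_{l-1}$, and since $a_l>0$ the difference $a_l-a_{l-1}=0$ prevents coordinate $l-1$ from being type (II) degenerate, forcing $a_{l-1}=a_{l-2}$ as well; iterating propagates a constant positive value down to coordinate $1$, where neither $a_1=a_0=0$ nor $a_2-a_1=\lambda_2-\lambda_1$ can hold, so coordinate $1$ becomes a second nondegenerate coordinate, a contradiction. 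Granting this claim, for $a_k>0$ the telescoping sum yields $a_k=j-(\lambda_i-\lambda_k)$, while for $a_k=0$ the lower bound already forces $\ul{j-(\lambda_i-\lambda_k)}=0$; in both cases $a_k=\ul{j-(\lambda_i-\lambda_k)}$, so $\h a$ is exactly the defining tuple of $J(i,a_i)$, completing the proof.
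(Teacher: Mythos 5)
Your proof is correct and takes essentially the same route as the paper's: both directions reduce to Theorem \ref{onedegen}, the forward direction by checking that the defining tuple of $J(i,j)$ has $i$ as its unique nondegenerate coordinate, and the converse by the same degeneracy-propagation arguments (relying on the distinctness of the $\lambda_k$) that force $a_i=a_{i+1}=\cdots=a_n$ and $a_k=\ul{a_i-(\lambda_i-\lambda_k)}$ for $k<i$. The only cosmetic differences are that you verify canonicality of the $J(i,j)$ tuple explicitly (which the paper leaves implicit in Definition \ref{jdef}) and phrase the propagation steps as contradictions rather than direct inductions.
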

\begin{proof}
We first show that $J(i,j)=R(\h a)$ is join-irreducible. Any coordinate $k\neq i$ of $J(i,j)$ is degenerate since if $k>i$, then $a_k=a_{k-1}$, while if $k<i$, then either $a_k=0$ or $$a_{k+1}-a_k=(j-(\lambda_i-\lambda_{k+1}))-(j-(\lambda_i-\lambda_k))=\lambda_{k+1}-\lambda_k.$$
The coordinate $i$ is nondegenerate, for $j\neq 0$ and $\lambda_i\neq \lambda_{i-1}$ implies $a_{i-1}\neq a_i$, and, if $i\neq n$, then $\lambda_i\neq\lambda_{i+1}$ implies $a_{i+1}-a_i\neq\lambda_{i+1}-\lambda_i$. Thus $\h a$ has precisely one nondegenerate coordinate, namely $i$. So by Theorem \ref{onedegen}, $J(i,j)$ is join-irreducible.

Conversely, assume $R(\h a)$ is any join-irreducible element of $\Char(G)$. Then by Theorem \ref{onedegen}, $\h a$ has a unique nondegenerate coordinate $i$. The coordinates $n,n-1,\dots,i+1$ are all degenerate, and by induction, they must be degenerate of type (I). (Here we are again using the fact that the $\lambda_k$'s are all distinct.) Thus $a_n=a_{n-1}=\cdots=a_{i+1}=a_i$. Now for any $k$, $0<k<i$, the coordinate $k$ is degenerate. If it is degenerate of type (I), then by induction $k-1,k-2,\dots,1$ are also degenerate of type (I); consequently, $a_k=a_{k-1}=a_{k-2}=\cdots=a_1=a_0=0$. So either $a_k=0$ or $k$ is degenerate of type (II), i.e. $a_k=a_{k+1}-(\lambda_{k+1}-\lambda_k)$. By induction it follows that $\h a=J(i,a_i)$.
\end{proof}
The following lemma describes the partial order on $J(G)$:
\begin{lemma}\label{jpar}
Given $J(i_1,j_1), J(i_2,j_2) \in J(G)$,
$$J(i_1,j_1) \subseteq J(i_2,j_2) \iff j_2-j_1 \geq \max\{0,\lambda_{i_2}-\lambda_{i_1}\}.$$
\end{lemma}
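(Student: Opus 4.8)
The plan is first to reduce the subgroup containment to a comparison of tuples. Writing $J(i_1,j_1)=R(\h a)$ and $J(i_2,j_2)=R(\h b)$, I would observe that $R(\h a)\subseteq R(\h b)$ if and only if $\h a\leq \h b$ componentwise: by Theorem \ref{Rlat}(i) the containment is equivalent to $R(\h a)=R(\h a)\cap R(\h b)=R(\h a\wedge\h b)$, and then Theorem \ref{Rlat}(iii) forces $\sum_k a_k=\sum_k\min\{a_k,b_k\}$, which (since each summand on the right is at most the corresponding $a_k$) holds exactly when $a_k\leq b_k$ for every $k$. So the lemma reduces to a purely numerical comparison of entries. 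To organize it, I would set $d_1=\lambda_{i_1}-j_1$ and $d_2=\lambda_{i_2}-j_2$ (both $\geq 0$) and rewrite the entries in the uniform ``clipped ramp'' form
$$a_k=\min\{j_1,\ul{\lambda_k-d_1}\},\qquad b_k=\min\{j_2,\ul{\lambda_k-d_2}\},$$
verifying agreement with Definition \ref{jdef} both when $k\geq i$ (where $\lambda_k-d\geq j$, so the minimum is $j$) and when $k<i$ (where $\lambda_k-d<j$, so the minimum is $\ul{\lambda_k-d}$). In these terms the target inequality $j_2-j_1\geq\max\{0,\lambda_{i_2}-\lambda_{i_1}\}$ says precisely that $j_1\leq j_2$ and $d_1\geq d_2$.

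For the $(\Leftarrow)$ direction I would argue by monotonicity. Assuming $j_1\leq j_2$ and $d_1\geq d_2$, to show $a_k\leq b_k=\min\{j_2,\ul{\lambda_k-d_2}\}$ it suffices to bound $a_k$ by each of the two arguments of the minimum. On one hand $a_k\leq j_1\leq j_2$. On the other hand $a_k\leq\ul{\lambda_k-d_1}\leq\ul{\lambda_k-d_2}$, where the last step uses $d_1\geq d_2$ together with the monotonicity of $\ul{\cdot}$. Hence $a_k\leq b_k$ for all $k$, giving $R(\h a)\subseteq R(\h b)$.

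The $(\Rightarrow)$ direction is the crux, and the work is in choosing the right test coordinates. Assuming $\h a\leq\h b$, evaluating at any coordinate $k\geq\max\{i_1,i_2\}$ gives $a_k=j_1$ and $b_k=j_2$, so $j_1\leq j_2$; this already settles the claim whenever $i_1\geq i_2$, since then $\max\{0,\lambda_{i_2}-\lambda_{i_1}\}=0$. When $i_1<i_2$ I would instead evaluate at $k=i_1$: there $a_{i_1}=j_1\geq 1$, and since $i_1<i_2$ the entry $b_{i_1}=\ul{\lambda_{i_1}-d_2}$ lies on the ramp of $b$. Because $a_{i_1}\leq b_{i_1}$ forces $b_{i_1}\geq 1>0$, the clipping is inactive, so $b_{i_1}=\lambda_{i_1}-d_2$, and the inequality $j_1\leq\lambda_{i_1}-d_2$ rearranges to $d_1\geq d_2$, i.e. $j_2-j_1\geq\lambda_{i_2}-\lambda_{i_1}$. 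Combining the two cases yields $j_2-j_1\geq\max\{0,\lambda_{i_2}-\lambda_{i_1}\}$. The only subtlety to handle carefully is the bookkeeping of the clipping function $\ul{\cdot}$ --- specifically verifying at $k=i_1$ that the relevant entry genuinely sits on the linear part of the ramp rather than being clipped to $0$ --- which is exactly where the standing hypotheses $j_1,j_2\geq 1$ enter.
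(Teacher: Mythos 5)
Your proof is correct. Its core is the same as the paper's: reduce the containment $J(i_1,j_1)\subseteq J(i_2,j_2)$ to the componentwise comparison $\h a\leq\h b$ of the defining tuples, then test coordinates. In fact your necessity direction coincides with the paper's almost verbatim: the paper also reads off $j_1\leq j_2$ from the last coordinate, and for $i_1<i_2$ evaluates at $k=i_1$, noting $0<j_1=a_{i_1}\leq b_{i_1}=\ul{j_2-(\lambda_{i_2}-\lambda_{i_1})}$ so that the clipping is inactive and the inequality rearranges to the claim. Where you genuinely depart is in the packaging of the rest. First, you prove the reduction $R(\h a)\subseteq R(\h b)\iff \h a\leq\h b$ explicitly via Theorem \ref{Rlat}(i) and (iii); the paper uses this equivalence silently (it also follows directly from the definition of $R$ as a union of types: $T(\h a)\subseteq R(\h a)\subseteq R(\h b)$ forces $\h a\leq\h b$, and conversely $\h a\leq\h b$ gives containment of the defining unions). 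Second, and more substantively, your reparametrization by $d=\lambda_i-j$, the uniform formula $a_k=\min\{j_1,\ul{\lambda_k-d_1}\}$, and the observation that the target inequality says exactly ``$j_1\leq j_2$ and $d_1\geq d_2$'' turn the sufficiency direction into a single monotonicity argument, uniform in $k$. The paper instead handles sufficiency by cases: for $i_1\geq i_2$ it interpolates $J(i_1,j_1)\subseteq J(i_2,j_1)\subseteq J(i_2,j_2)$, and for $i_1\leq i_2$ it checks $a_k\leq b_k$ separately on the ranges $k\geq i_2$, $k\leq i_1$, and $i_1\leq k\leq i_2$. Your version buys a case-free verification that makes the content of the inequality transparent; the paper's buys directness from the raw definition. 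Both arguments are complete.
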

\begin{proof}
In the case $i_1\geq i_2$, we have $\lambda_{i_2}-\lambda_{i_1}\leq 0$, so the above statement is equivalent to
$$J(i_1,j_1) \subseteq J(i_2,j_2) \iff j_1 \leq j_2.$$
Write $J(i_1,j_1)=R(\h a)$ and $J(i_2,j_2)=R(\h b)$. If $J(i_1,j_1)\subseteq J(i_2,j_2)$ then $\h a\leq\h b$, so $j_1=a_n\leq b_n=j_2$. Conversely, if $j_1 \leq j_2$, then from the definition of $J(i,j)$ it is straightforward to verify that $J(i_1,j_1) \subseteq J(i_2,j_1)\subseteq J(i_2,j_2)$.

Now consider the case $i_1\leq i_2$. Then the statement is equivalent to
$$J(i_1,j_1) \subseteq J(i_2,j_2) \iff j_2-j_1 \geq \lambda_{i_2}-\lambda_{i_1}.$$
If $J(i_1,j_1)\subseteq J(i_2,j_2)$, then we must have $$0 < j_1 = a_{i_1} \leq b_{i_1}=\ul{j_2-(\lambda_{i_2}-\lambda_{i_1})}=j_2-(\lambda_{i_2}-\lambda_{i_1}),$$
hence $j_2-j_1 \geq \lambda_{i_2}-\lambda_{i_1}$, as desired. Conversely, suppose $j_2-j_1\geq \lambda_{i_2}-\lambda_{i_1}$. Then for $k\geq i_2$ we have $a_k = j_1 \leq j_2 = b_k$. For $k \leq i_1$, since $j_1-\lambda_{i_1}\leq j_2-\lambda_{i_2}$ we have
$$a_k = \ul{j_1-(\lambda_{i_1}-\lambda_k)} \leq \ul{j_2-(\lambda_{i_2}-\lambda_k)} = b_k$$
Finally, for $i_1 \leq k \leq i_2$, since $\lambda_{i_1} \leq \lambda_{i_k}$ we have
$$a_k = j_1  \leq  j_2-(\lambda_{i_2}-\lambda_{i_1}) \leq j_2-(\lambda_{i_2}-\lambda_k)
= \ul{j_2-(\lambda_{i_2}-\lambda_k)} = b_k$$
So in every case $a_k \leq b_k$, hence $\h a \leq \h b$ and $J(i_1,j_1) \subseteq J(i_2,j_2)$.
\end{proof}
\begin{thm}\label{jdual}
The poset $J(G)$ is self-dual. An order-reversing involution is given by
$$\phi: J(i,j) \mapsto J(i,\lambda_i-j+1)$$
\end{thm}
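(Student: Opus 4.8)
The plan is to show that the stated map $\phi$ is a well-defined, order-reversing involution of the poset $J(G)$; since such a map is precisely an isomorphism $J(G)\to J(G)^{\mathrm{op}}$, this establishes that $J(G)$ is self-dual. By Theorem \ref{jjoin} the elements of $J(G)$ are exactly the $J(i,j)$ with $i\in\{1,\dots,n\}$ and $j\in\{1,\dots,\lambda_i\}$. For fixed $i$ the assignment $j\mapsto\lambda_i-j+1$ is a bijection of $\{1,\dots,\lambda_i\}$ onto itself, so $\phi(J(i,j))=J(i,\lambda_i-j+1)$ again lies in $J(G)$; and since $\lambda_i-(\lambda_i-j+1)+1=j$, we have $\phi^2=\mathrm{id}$. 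Thus $\phi$ is a bijection of $J(G)$ equal to its own inverse.

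The crux is the order relation, and here I would first recast Lemma \ref{jpar} in a symmetric form. The single inequality $j_2-j_1\geq\max\{0,\lambda_{i_2}-\lambda_{i_1}\}$ holds if and only if both $j_2-j_1\geq 0$ and $j_2-j_1\geq\lambda_{i_2}-\lambda_{i_1}$ hold. Rewriting these two inequalities, Lemma \ref{jpar} becomes
\begin{equation*}
J(i_1,j_1)\subseteq J(i_2,j_2)\iff j_1\leq j_2\ \text{ and }\ \lambda_{i_1}-j_1\geq\lambda_{i_2}-j_2.
\end{equation*}
In other words, if to each join-irreducible $J(i,j)$ we attach the pair $(u,v)=(j,\lambda_i-j)$, then containment is governed entirely by these pairs: $J(i_1,j_1)\subseteq J(i_2,j_2)$ precisely when $u_1\leq u_2$ and $v_1\geq v_2$.

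With this reformulation the order-reversal becomes transparent. The map $\phi$ sends the pair $(u,v)=(j,\lambda_i-j)$ of $J(i,j)$ to the pair of $J(i,\lambda_i-j+1)$, namely $(\lambda_i-j+1,\,j-1)=(v+1,\,u-1)$; that is, $\phi$ interchanges the two coordinates and shifts each by a fixed $\pm 1$. Applying the criterion to $\phi(J(i_2,j_2))$ and $\phi(J(i_1,j_1))$, whose pairs are $(v_2+1,u_2-1)$ and $(v_1+1,u_1-1)$, the containment $\phi(J(i_2,j_2))\subseteq\phi(J(i_1,j_1))$ holds iff $v_2+1\leq v_1+1$ and $u_2-1\geq u_1-1$, i.e.\ iff $u_1\leq u_2$ and $v_1\geq v_2$, which is exactly $J(i_1,j_1)\subseteq J(i_2,j_2)$. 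Hence $\phi$ reverses $\subseteq$, and being an involutive bijection it witnesses $J(G)\cong J(G)^{\mathrm{op}}$.

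I expect no serious obstacle: once the containment condition of Lemma \ref{jpar} is split into its two constituent inequalities, the underlying symmetry is manifest, and one could alternatively prove the same thing by a short case split on whether $i_1\leq i_2$ or $i_1\geq i_2$ (handling the $\max\{0,\cdot\}$ term accordingly). The only point requiring genuine care is the bookkeeping of the uniform $\pm 1$ shifts introduced by $\phi$, since one must confirm that they cancel when two images are compared, so that the equivalence of the two containment conditions is exact rather than off by one.
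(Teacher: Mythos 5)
Your proof is correct and takes essentially the same route as the paper's: both verify that $\phi$ is an involution and then establish order-reversal purely from Lemma \ref{jpar}. Your splitting of the condition $j_2-j_1\geq\max\{0,\lambda_{i_2}-\lambda_{i_1}\}$ into the pair of inequalities $j_1\leq j_2$ and $\lambda_{i_1}-j_1\geq\lambda_{i_2}-j_2$ (so that $\phi$ visibly swaps the two coordinates) is a clean repackaging of the same algebra that the paper carries out directly as a chain of equivalences on the $\max$ inequality.
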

\begin{proof}
It is clear that $\phi$ is an involution. By Lemma \ref{jpar},
\begin{align*}
& \phi(J(i_1,j_1)) \subseteq \phi(J(i_2,j_2)) \\
\iff& J(i_1,\lambda_{i_1}-j_1+1) \subseteq J(i_2,\lambda_{i_2}-j_2+1) \\
\iff& (\lambda_{i_2}-j_2+1)-(\lambda_{i_1}-j_1+1) \geq \max\{0,\lambda_{i_2}-\lambda_{i_1}\}\\
\iff& j_1-j_2 \geq \max\{\lambda_{i_1}-\lambda_{i_2},0\}\\
\iff& J(i_2,j_2) \subseteq J(i_1,j_1)
\end{align*}
so $\phi$ is order-reversing.
\end{proof}

\begin{defn}
A \emph{down set} $X$ of a poset $P$ is a subset of $P$ such that for all $x,y\in P$, if $x \leq y$ and $y\in X$ then $x\in X$. The set of all elements below a given element $p \in P$ is called a \emph{principal down set} and is denoted $p^\downarrow$.
\end{defn}

\begin{thm}\label{Jsum}
Let $J(i,j)\in J(G)$ be given. Write $J(i,j)=R(\h a)$. Then
$$|J(i,j)^\downarrow|=\sum_{k=1}^na_k.$$
\end{thm}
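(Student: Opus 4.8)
The plan is to use Lemma~\ref{jpar} to recast $|J(i,j)^\downarrow|$ as a counting problem and then show that, organized according to the first index, this count reassembles as $\sum_k a_k$. By Theorem~\ref{jjoin}, the elements of $J(G)$ are exactly the $J(i',j')$ with $1\le i'\le n$ and $1\le j'\le\lambda_{i'}$, so $|J(i,j)^\downarrow|$ counts the pairs $(i',j')$ with $J(i',j')\subseteq J(i,j)$. By Lemma~\ref{jpar} this containment is the single inequality $j-j'\ge\max\{0,\lambda_i-\lambda_{i'}\}$. I would fix $i'$, count the admissible values of $j'$, and show this count is exactly $a_{i'}$; summing over $i'$ then yields the result.

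For $i'\ge i$, monotonicity of the $\lambda$'s gives $\lambda_{i'}\ge\lambda_i$, so the inequality reduces to $j'\le j$; since $j\le\lambda_i\le\lambda_{i'}$ the bound $j'\le\lambda_{i'}$ is inactive and there are exactly $j$ admissible values, matching $a_{i'}=j$ from Definition~\ref{jdef}. For $i'<i$, one has $\lambda_{i'}<\lambda_i$, so the condition reads $j'\le x$ with $x=j-(\lambda_i-\lambda_{i'})$; the number of $j'$ satisfying $1\le j'\le\lambda_{i'}$ and $j'\le x$ is $\ul{\min\{\lambda_{i'},x\}}$, whereas $a_{i'}=\ul{x}$ by Definition~\ref{jdef}.

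The crux is therefore to check that $\ul{\min\{\lambda_{i'},x\}}=\ul{x}$, i.e. that the ceiling $j'\le\lambda_{i'}$ never truncates the count below $a_{i'}$; this amounts to $x\le\lambda_{i'}$, which follows at once from $j\le\lambda_i$, since then $x=j-\lambda_i+\lambda_{i'}\le\lambda_{i'}$. With the upper bound shown inactive, each index $i'$ contributes exactly $a_{i'}$, and summing over $k=1,\dots,n$ yields $|J(i,j)^\downarrow|=\sum_{k=1}^n a_k$. The only subtlety in the whole argument is this last inequality; everything else is a direct translation through Lemma~\ref{jpar}.
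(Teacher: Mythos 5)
Your proposal is correct and follows essentially the same route as the paper: fix the first index $i'$, use Lemma~\ref{jpar} to count the admissible second indices, observe that the count equals $a_{i'}$ in both the cases $i'\geq i$ and $i'<i$, and sum. The only difference is that you explicitly verify the constraint $j'\leq\lambda_{i'}$ is never active (via $j\leq\lambda_i$), a point the paper's proof passes over silently; this is a welcome bit of extra care, not a departure in method.
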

In other words, the number of join-irreducible subgroups of $G$ contained in $J(i,j)$ is equal to the sum of the entries of $J(i,j)$.
\begin{proof}
Write $J(i,j)=R(\h a)$. We claim that, for any $i_0$, the number of join-irreducible subgroups $J(i_0,j_0)\in J(G)$ contained in $J(i,j)$ is $a_{i_0}$. From this it clearly follows that the total number of join-irreducible subgroups contained in $J(i,j)$ is $a_1+a_2+\cdots+a_n$, as desired. So let $i_0$ be given. First suppose $i_0 \geq i$. By Lemma \ref{jpar}, we have
$$J(i_0,j_0) \subseteq J(i,j) \iff j_0 \leq j,$$
so there are $j$ suitable choices for $j_0$, namely $j_0\in\{1,\dots,j\}$. Since $a_{i_0}=j$, this proves the claim in this case. Now suppose $i_0 \leq i$. By Lemma \ref{jpar}, we have
$$J(i_0,j_0) \subseteq J(i,j) \iff j-j_0 \geq \lambda_i-\lambda_{i_0},$$
so the suitable choices for $j_0$ are $j_0\in\{1,2,\dots,j-(\lambda_i-\lambda_{i_0})\}$, a total of $\ul{j-(\lambda_i-\lambda_{i_0})}$ choices. Since $a_{i_0}=\ul{j-(\lambda_i-\lambda_{i_0})}$, this proves the claim in this case.
\end{proof}

\begin{defn}
A \emph{down-set chain} of poset $P$ is a subset of $P$ which is both a chain and a down-set of $P$. A \emph{maximal down-set chain} is a down-set chain which is not contained in any larger down-set chain.
\end{defn}
\begin{thm}\label{maxdownchain}
If $n\geq 2$ and $\lambda_n-\lambda_{n-1}\geq 2$, then $J(G)$ has precisely two maximal down-set chains, namely
\begin{align*}
D_1(G)&=\{J(i,1) : i\in\{1,\dots,n\}\},\text{ and}\\
D_2(G)&=\{J(n,j) : j\in\{1,\dots,\lambda_n-\lambda_{n-1}\}\}.
\end{align*}
\end{thm}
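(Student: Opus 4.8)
The plan is to recast everything in terms of principal down-sets. First I would observe that $J(G)$ has a least element, namely $J(n,1)$: by Lemma \ref{jpar}, $J(n,1)\subseteq J(i,j)$ holds iff $j-1\geq\max\{0,\lambda_i-\lambda_n\}=0$, which is automatic. Consequently every nonempty down-set chain $C$, being a finite chain, has a greatest element $m$, and downward-closure forces $C=m^\downarrow$. Thus the down-set chains are exactly the principal down-sets $m^\downarrow$ that happen to be chains, and since $m_1^\downarrow\subseteq m_2^\downarrow\iff m_1\leq m_2$, the maximal down-set chains correspond to the maximal $m$ for which $m^\downarrow$ is a chain.

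Next I would identify $D_1$ and $D_2$ as principal down-sets. A direct computation with Lemma \ref{jpar} shows $J(1,1)^\downarrow=D_1$ (the condition $J(i',j')\subseteq J(1,1)$ reduces to $j'=1$, since $\lambda_1\leq\lambda_{i'}$) and $J(n,\lambda_n-\lambda_{n-1})^\downarrow=D_2$ (the condition forces $\lambda_{i'}>\lambda_{n-1}$, hence $i'=n$, and then $j'\leq\lambda_n-\lambda_{n-1}$). Both are chains: $D_1$ is linearly ordered because $J(i,1)\subseteq J(i',1)\iff i'\leq i$, and $D_2$ is a chain because all its members share the index $n$. Moreover $J(1,1)\in D_1\setminus D_2$ and, using $\lambda_n-\lambda_{n-1}\geq2$, $J(n,2)\in D_2\setminus D_1$, so neither of $D_1,D_2$ contains the other.

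The heart of the argument is to show that \emph{every} down-set chain is contained in $D_1$ or in $D_2$. Writing such a chain as $m^\downarrow$ with $m=J(i,j)$, it suffices to prove that either $j=1$ (whence $m\subseteq J(1,1)$ and $m^\downarrow\subseteq D_1$) or else $i=n$ and $j\leq\lambda_n-\lambda_{n-1}$ (whence $m^\downarrow\subseteq D_2$). I would argue the contrapositive: assuming $j\geq2$ and not both of $i=n$ and $j\leq\lambda_n-\lambda_{n-1}$, I exhibit two incomparable elements inside $m^\downarrow$, contradicting that it is a chain. If $i<n$, the pair is $J(i,1)$ and $J(n,2)$; if $i=n$ and $j\geq\lambda_n-\lambda_{n-1}+1$, the pair is $J(n-1,1)$ and $J(n,2)$. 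In each case Lemma \ref{jpar} confirms that both elements lie in $m^\downarrow$ and that they are mutually incomparable, the hypothesis $\lambda_n-\lambda_{n-1}\geq2$ being exactly what defeats comparability (for instance, $J(n-1,1)\subseteq J(n,2)$ would require $1\geq\lambda_n-\lambda_{n-1}$).

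Finally I would assemble these facts. Since $D_1$ and $D_2$ are themselves down-set chains, any maximal down-set chain $C$ satisfies $C\subseteq D_1$ or $C\subseteq D_2$ and hence equals $D_1$ or $D_2$; and each $D_i$ is itself maximal, since a down-set chain properly containing it would have to be contained in the other, contradicting the incomparability of $D_1$ and $D_2$. I expect the main obstacle to be the incomparability step: one must verify against Definition \ref{jdef} (via Lemma \ref{jpar}) precisely which $J(i',j')$ lie in $m^\downarrow$, and the case split on $i<n$ versus $i=n$ must be carried out carefully, as it is exactly here that both hypotheses $n\geq2$ and $\lambda_n-\lambda_{n-1}\geq2$ are consumed.
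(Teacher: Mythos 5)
Your proposal is correct and follows essentially the same route as the paper: both arguments rest on Lemma \ref{jpar} and on exhibiting the incomparable pair $J(n,2)$ and $J(n-1,1)$ (or $J(i,1)$) inside any down-set chain that escapes both $D_1(G)$ and $D_2(G)$, with the hypothesis $\lambda_n-\lambda_{n-1}\geq 2$ supplying the incomparability. Your preliminary reduction---that every nonempty down-set chain equals a principal down-set $m^\downarrow$, so only the single generator $m=J(i,j)$ need be analyzed---is a clean repackaging of what the paper does by choosing two witnesses $J(i_1,j_1)\notin D_1(G)$ and $J(i_2,j_2)\notin D_2(G)$, but the combinatorial core is identical.
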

\begin{proof}
Note that the elements of $D_1(G)$ are $$R(1,\dots,1), R(0,1,\dots,1), R(0,0,1,\dots,1),\dots,R(0,\dots,0,1),$$
which clearly form a down-set chain. The elements of $D_2(G)$ are
$$R(0,\dots,0,1), R(0,\dots,0,2), \dots, R(0,\dots,0,\lambda_n-\lambda_{n-1})$$
which also form a down-set chain.
%These two chains have in common the bottom element $R(0,\dots,0,1)$ of $J(G)$. We observe that any non-bottom element of $D_1(G)$ is incomparable with any non-bottom element of $D_2(G)$: For if $J(i_1,1)=R(\h a)$ is a non-bottom element of $D_1(G)$ and $J(0,j_2)=R(\h b)$ is a non-bottom element of $D_2(G)$, then we have $a_n=1<j_2=b_n$ while $a_{n-1}=1>0=b_{n-1}$.

Now, to show that $D_1(G)$ and $D_2(G)$ are maximal, and that they are the only maximal down-set chains, we will show that any down-set chain $D$ is contained in either $D_1(G)$ or $D_2(G)$. Suppose $D$ is contained in neither $D_1(G)$ nor $D_2(G)$. Then $D$ has an element $J(i_1,j_1)$ which is not in $D_1(G)$ and an element $J(i_2,j_2)$ which is not in $D_2(G)$. Now we must have $j_1\geq 2$, so since $D$ is a down-set and $R(0,\dots,0,2)=J(n,2)\subseteq J(i_1,j_1)$, it follows that $J(n,2)\in D$. And we must have either $i_2<n$ or $j_2>\lambda_n-\lambda_{n-1}$; in either case the $(n-1)$th coordinate of $J(i_2,j_2)$ is nonzero, so $R(0,\dots,0,1,1)=J(n-1,1)\subseteq J(i_2,j_2)$, hence $J(n-1,1)\in D$. Since $J(n,2)$ and $J(n-1,1)$ are incomparable, this contradicts that $D$ is a chain.
\end{proof}

\begin{thm}\label{maxdownchain2}
If $n\geq 3$ and $\lambda_n-\lambda_{n-1}=1$, then $J(G)$ has precisely two maximal down-set chains, namely
\begin{align*}
D_1(G)=&\{J(i,1) : i\in\{1,\dots,n\}\},\text{ and}\\
D_2'(G)=&\{J(n-1,j) : j\in\{1,\dots,\lambda_{n-1}-\lambda_{n-2}\}\} \cup\\
&\{J(n,j) : j\in\{1,\dots,\lambda_{n-1}-\lambda_{n-2}+1\}\}.
\end{align*}
\end{thm}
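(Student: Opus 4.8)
The plan is to imitate the proof of Theorem~\ref{maxdownchain}, adjusting for the fact that when $\lambda_n-\lambda_{n-1}=1$ the second chain $D_2(G)$ of that theorem degenerates to the single element $J(n,1)$ and so must be enlarged to $D_2'(G)$. Put $d=\lambda_{n-1}-\lambda_{n-2}$, so $d\geq 1$ since the $\lambda_k$ are strictly increasing. First I would check that $D_1(G)$ and $D_2'(G)$ are down-set chains. For $D_1(G)$ the verification is identical to the previous theorem. For $D_2'(G)$ I would use Lemma~\ref{jpar} to see that its elements interleave into a single chain
\[
J(n,1)\subseteq J(n-1,1)\subseteq J(n,2)\subseteq J(n-1,2)\subseteq\cdots\subseteq J(n-1,d)\subseteq J(n,d+1),
\]
the two recurring relations being $J(n,j)\subseteq J(n-1,j)$ (needing $j-j\geq 0$) and $J(n-1,j)\subseteq J(n,j+1)$ (needing $(j+1)-j\geq\lambda_n-\lambda_{n-1}=1$).

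Next I would verify that $D_2'(G)$ is a down-set. By Lemma~\ref{jpar}, for any $i\leq n-2$ one has $J(i,k)\subseteq J(n-1,j)$ only if $k\leq j-(\lambda_{n-1}-\lambda_i)\leq j-d\leq 0$, and $J(i,k)\subseteq J(n,j')$ only if $k\leq j'-(\lambda_n-\lambda_i)\leq (d+1)-(d+1)=0$; both are impossible for $k\geq 1$, so no join-irreducible of index below $n-1$ lies under any element of $D_2'(G)$. Among indices $n-1$ and $n$ the same inequalities show that everything below a listed element is again listed, so $D_2'(G)$ is a down-set chain. Moreover $D_2'(G)$ is not contained in $D_1(G)$ (it contains $J(n,2)$), and $D_1(G)$ is not contained in $D_2'(G)$ (it contains $J(1,1)$, using $n\geq 3$).

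The heart of the proof is to show that any down-set chain $D$ lies inside $D_1(G)$ or $D_2'(G)$. Suppose not. Since $D\not\subseteq D_1(G)$, some $J(i_1,j_1)\in D$ has $j_1\geq 2$; as $J(n,2)\subseteq J(i_1,j_1)$ by Lemma~\ref{jpar}, the down-set $D$ contains $J(n,2)$. Since $D\not\subseteq D_2'(G)$, some $J(i_2,j_2)\in D$ violates the defining bounds, i.e.\ either $i_2\leq n-2$, or $i_2=n-1$ with $j_2>d$, or $i_2=n$ with $j_2>d+1$; in each of these three cases the failed bound is exactly what Lemma~\ref{jpar} requires for $J(n-2,1)\subseteq J(i_2,j_2)$, so $J(n-2,1)\in D$. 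But $J(n-2,1)$ and $J(n,2)$ are incomparable: $J(n-2,1)\subseteq J(n,2)$ would require $1\geq\lambda_n-\lambda_{n-2}=d+1\geq 2$, while $J(n,2)\subseteq J(n-2,1)$ would require $-1\geq 0$. This contradicts $D$ being a chain; hence every down-set chain is contained in $D_1(G)$ or $D_2'(G)$, and since neither of these contains the other, they are precisely the two maximal down-set chains.

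The main obstacle is simply choosing the right incomparable witness pair. In Theorem~\ref{maxdownchain} the pair $J(n,2),J(n-1,1)$ sufficed, but when $\lambda_n-\lambda_{n-1}=1$ one has $J(n-1,1)\subseteq J(n,2)$, so these become comparable and the obstruction must be pushed down one further index to $J(n-2,1)$. This is exactly why the hypothesis strengthens to $n\geq 3$ and why the index-$(n-1)$ rung $\{J(n-1,j):j\leq d\}$ has to be added to the second chain.
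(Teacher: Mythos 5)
Your proof is correct and follows essentially the same route as the paper's: verify that $D_1(G)$ and $D_2'(G)$ are down-set chains, then show any down-set chain contained in neither must contain both $J(n,2)$ and $J(n-2,1)$, which are incomparable — exactly the paper's witness pair. The only difference is cosmetic: you carry out the comparisons via Lemma \ref{jpar} while the paper reads them off the explicit tuples $R(0,\dots,0,1,2)$ and $R(0,\dots,0,1,1,1)$, and you add the (easy, but worth stating) observation that neither chain contains the other.
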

\begin{proof}
As in the previous theorem, $D_1(G)$ is clearly down-set chain. The elements of $D_2'(G)$ are
\begin{align*}
&R(0,\dots,0,0,1), R(0,\dots,0,1,1), \\
&R(0,\dots,0,1,2), R(0,\dots,0,2,2), \\
&\dots \\
&R(0,\dots,0,\lambda_{n-1}-\lambda_{n-2}-1,\lambda_{n-1}-\lambda_{n-2}),
R(0,\dots,0,\lambda_{n-1}-\lambda_{n-2},\lambda_{n-1}-\lambda_{n-2})\\
&R(0,\dots,0,\lambda_{n-1}-\lambda_{n-2},\lambda_{n-1}-\lambda_{n-2}+1)
\end{align*}
which also form a down-set chain.

Suppose there is a down-set chain $D$ with an element $J(i_1,j_1)$ not in $D_1(G)$ and an element $J(i_2,j_2)$ not in $D_2(G)$.  We must have $j_1\geq 2$, hence $R(0,\dots,0,1,2)=J(n,2) \in D$.
If $i_2=n-1$ then $j_2>\lambda_{n-1}-\lambda_{n-2}$, while if $i_2=n$ then $j_2>\lambda_{n-1}-\lambda_{n-2}+1$; the only other possibility is $i_2\leq n-2$, so in any case the $(n-2)$th coordinate of $J(i_2,j_2)$ is nonzero, so $R(0,\dots,0,1,1,1)=J(n-2,1)\in D$. Since $J(n,2)$ and $J(n-2,1)$ are incomparable, this contradicts that $D$ is a chain.
\end{proof}

\begin{proof}[Proof of Main Theorem]
The ``if" part has already been shown in Theorems \ref{thmchain} and \ref{z25z124}. So assume $\Char(G) \cong \Char(H)$, with $\phi$ a lattice isomorphism mapping $G$ onto $H$. We may assume $n\geq 2$ and $m \geq 2$, since if either $n<2$ or $m<2$, then $\Char(G)$ and $\Char(H)$ are chains, and this case has been completely characterized by Theorem \ref{thmchain}.

One simple observation is that $\Char(G)$ and $\Char(H)$ must have the same number of join-irreducible elements, which by Theorem \ref{jjoin} implies
\begin{equation}\label{eqsum}\sum_{k=1}^n\lambda_k=\sum_{k=1}^n\mu_k.\end{equation}
In other words, $G$ and $H$ must have the same order.

For $1 \leq i \leq n$, we define
\begin{align*}
\alpha_i&=\lambda_{n-i+1}-\lambda_{n-i},\\
\beta_i&=\mu_{m-i+1}-\mu_{m-i},
\end{align*}
The proof splits into two cases, according to whether $\alpha_1>1$ or $\alpha_1=1$:\\
\emph{Case 1:} First assume $\alpha_1>1$, i.e., $\lambda_n-\lambda_{n-1}>1$. In this case we claim that $\mu_m-\mu_{m-1}>1$ also. For note that the elements
\begin{align*}
J_G(n-1,1)&=R(0,\dots,0,1,1)\\
J_G(n,2)&=R(0,\dots,0,0,2)
\end{align*}
are distinct elements of $J(G)$ with their entries summing to 2, and that these are the only such elements of $J(G)$. Since $J(G) \cong J(H)$, Theorem \ref{Jsum} implies that $J(H)$ must also have two such elements; as the only candidates are $J_H(m-1,1)$ and $J_H(m,2)$, we must have $\mu_m-\mu_{m-1}>1$ since otherwise we would have
$$J_H(m,2)=R(0,\dots,0,1,2),$$
whose entries sum to 3 instead of 2. Note that a similar argument shows that if $\beta_1>1$ then $\alpha_1>1$, so in fact $\alpha_1=1$ if and only if $\beta_1=1$. (We will use this below in Case 2.)

Now, by Theorem \ref{maxdownchain}, $J(G)$ has precisely two maximal down set chains $D_1(G)$ and $D_2(G)$; similarly $J(H)$ has precisely two maximal down set chains $D_1(H)$ and $D_2(H)$. Since a lattice isomorphism clearly maps maximal down set chains to maximal down set chains, we must have either $\phi(D_1(G))=D_1(H)$ or $\phi(D_1(G))=D_2(H)$.

First assume $\phi(D_1(G))=D_1(H)$. Then, $n=|D_1(G)|=|D_1(H)|=m$, and for all $0 \leq i\leq n-1$ we have
$$\phi(J_G(n-i,1))=J_H(n-i,1).$$ Hence, by Theorem \ref{jdual},
\begin{align*}
|J_G(n-i,\lambda_{n-i})^\downarrow |=|J_G(n-i,1)^\uparrow |
=|J_H(n-i,1)^\uparrow |=|J_H(n-i,\lambda_{n-i})^\downarrow|.
\end{align*}
By Theorem \ref{Jsum} and Definition \ref{jdef}, this implies
$$\sum_{k=1}^{n-i}\ul{\lambda_{n-i}-(\lambda_{n-i}-\lambda_k)} + i\lambda_{n-i}
=\sum_{k=1}^{n-i}\ul{\mu_{n-i}-(\mu_{n-i}-\mu_k)} + i\mu_{n-i}, $$
i.e.,
\begin{equation}\label{sum1}\sum_{k=1}^{n-i}\lambda_k + i\lambda_{n-i}
=\sum_{k=1}^{n-i}\mu_k + i\mu_{n-i}.
\end{equation}
We note that this also holds (trivially) for $i=n$ since $\lambda_0=\mu_0=0$. Using (\ref{eqsum}), we may rewrite (\ref{sum1}) as
$$\sum_{k=n-i+1}^n\lambda_k-i\lambda_{n-i}=\sum_{k=n-i+1}^n\mu_k-i\mu_{n-i},$$
which is equivalent to
$$\sum_{k=1}^ik\alpha_k = \sum_{k=1}^ik\beta_k.$$
From this it follows easily by induction that $\alpha_i=\beta_i$ for all $1 \leq i \leq n$, which implies $\lambda_i=\mu_i$ for all $1 \leq i \leq n$, so $G=H$ and we are done.

So assume instead that $\phi(D_1(G))=D_2(H)$. In this case, $\phi(D_2(G))=D_1(H)$, and so $\alpha_1=\lambda_n-\lambda_{n-1}=|D_2(G)|=|D_1(H)|=m$, and likewise $\beta_1=\lambda_m-\lambda_{m-1}=|D_2(H)|=|D_1(G)|=n$. For all $0 \leq i \leq n-1$ we have $$\phi(J_G(n-i,1))=J_H(m,i+1).$$
Hence, by Theorem \ref{jdual},
\begin{align*}
|J_G(n-i,\lambda_{n-i})^\downarrow |=|J_G(n-i,1)^\uparrow |
=|J_H(m,i+1)^\uparrow |=|J_H(m,\mu_m-i)^\downarrow |.
\end{align*}
By Theorem \ref{Jsum} and Definition \ref{jdef}, this implies
$$\sum_{k=1}^{n-i}\ul{\lambda_{n-i}-(\lambda_{n-i}-\lambda_k)} + i\lambda_{n-i}
=\sum_{k=1}^m\ul{\mu_m-i-(\mu_m-\mu_k)}, $$
i.e.,
$$\sum_{k=1}^{n-i}\lambda_k + i\lambda_{n-i}
=\sum_{k=1}^m\ul{\mu_k-i}. $$
If we define
$$\epsilon_i=\sum_{k=1}^m(\ul{\mu_k-i}-(\mu_k-i)),$$
then this becomes
$$\sum_{k=1}^{n-i}\lambda_k + i\lambda_{n-i}
=\sum_{k=1}^m(\mu_k-i)+\epsilon_i = \sum_{k=1}^m\mu_k-im+\epsilon_i $$
Applying (\ref{eqsum}), this becomes
$$\sum_{k=n-i+1}^n\lambda_k-i\lambda_{n-i}=im-\epsilon_i,$$
which is equivalent to
\begin{equation}\label{sum2}\sum_{k=1}^ik\alpha_k=im-\epsilon_i.\end{equation}
Now, we claim that for $1 \leq i \leq n-1$,
\begin{equation}\label{eqn3}i\alpha_i=m-(\epsilon_i-\epsilon_{i-1}).\end{equation}
This holds for $i=1$ since we know $\alpha_1=m$ and $\epsilon_0=\epsilon_1=0$. By induction, (\ref{sum2}) gives
\begin{align*}
i\alpha_i&=im-\sum_{k=1}^{i-1}k\alpha_k-\epsilon_i\\
&=im-\sum_{k=1}^{i-1}(m-(\epsilon_k-\epsilon_{k-1}))-\epsilon_i = m-(\epsilon_i-\epsilon_{i-1}),
\end{align*}
proving the claim.

Recall we are assuming $m\leq n$. Consider the case $m<n$. Here we may take $i=m$ in (\ref{eqn3}), giving
$$m\alpha_m=m-(\epsilon_m-\epsilon_{m-1})$$
Observe that
\begin{equation}\label{eqneps}\epsilon_i-\epsilon_{i-1}=|\{k : \mu_k<i\}|\end{equation}
Hence $\epsilon_m-\epsilon_{m-1}\geq 0$. Since $\alpha_m>0$, this forces $\epsilon_m-\epsilon_{m-1}=0$ and $\alpha_m=1$. It follows from (\ref{eqneps}) that  $\epsilon_i-\epsilon_{i-1}=0$ for all $i\leq m$. Hence, since $\epsilon_0=0$, we must have $\epsilon_i=0$ for all $i\leq m$. Equation (\ref{eqn3}) then becomes
$$i\alpha_i=m$$
for $1\leq i\leq m$. So $m$ is divisible by every positive integer less than it. This implies $m=2$. If we had $n\geq 4$, then we could take $i=3$ in (\ref{eqn3}), giving
$$3\alpha_3=2-(\epsilon_3-\epsilon_2),$$
which is a contradiction since $\alpha_3>0$ and $\epsilon_3-\epsilon_2\geq 0$. Hence we must have $n=3$. So we have $\alpha_1=m=2$, $\alpha_2=\alpha_m=1$, and $\beta_1=n=3$. Hence,
\begin{align*}
\lambda(G)&=(\lambda_1,\lambda_1+1,\lambda_1+3)\\
\lambda(H)&=(\mu_1,\mu_1+3)
\end{align*}
Now, note that $J_G(3,3)=R(0,1,3)$ and $J_H(1,2)=R(2,2)$ are the only elements in $J(G)$ and $J(H)$ respectively with entries summing to 4. It follows that $\phi(J_G(3,3))=J_H(1,2)$. By Theorem \ref{jdual}, it follows that
$$|J_G(3,\lambda_1+1)^\downarrow |=|J_G(3,3)^\uparrow |=|J_H(1,2)^\uparrow |=| J_H(1,\mu_1-1)^\downarrow|.$$
Now we have
\begin{align*}
J_G(3,\lambda_1+2)&=R(\ul{\lambda_1-2},\lambda_1-1,\lambda_1+1)\\
J_H(1,\mu_1-1)&=R(\mu_1-1,\mu_1-1)
\end{align*}
So by Theorem \ref{Jsum},
$$\ul{\lambda_1-2}+(\lambda_1-1)+(\lambda_1+1)=(\mu_1-1)+(\mu_1-1)$$
Now, if $\lambda_1>1$, then this would imply
$$3\lambda_1-2=2\mu_1-2,$$
while (\ref{eqsum}) gives
$$3\lambda_1+4=2\mu_1+3,$$
a contradiction. So we must have $\lambda_1=1$, from which it follows that
\begin{align*}
\lambda(G)&=(1,2,4)\\
\lambda(H)&=(2,5),
\end{align*}
so $G=Z_p\times Z_{p^2}\times Z_{p^4}$ and $H=Z_{p^2}\times Z_{p^5}$.

It remains to consider the case $m=n$. First suppose $n\geq 5$. Then taking $i=n-1$ in (\ref{eqn3}) gives
$$(n-1)\alpha_{n-1}=n-(\epsilon_{n-1}-\epsilon_{n-2}).$$
Since $n\geq 5$ implies $2(n-1)>n$, we must have $\alpha_{n-1}=1$ and $\epsilon_{n-1}-\epsilon_{n-2}=1$. Now, taking $i=n-2$ in (\ref{eqn3}) gives
$$(n-2)\alpha_{n-2}=n-(\epsilon_{n-2}-\epsilon_{n-3}).$$
Since $n\geq 5$ implies $2(n-2)>n$, we must have $\alpha_{n-1}=1$ and $\epsilon_{n-2}-\epsilon_{n-3}=2$. But this contradicts that from (\ref{eqneps}), $\epsilon_i-\epsilon_{i-1}$ is an increasing sequence.

Now consider the case $m=n=4$. Taking $i=3$ in (\ref{eqn3}) gives
$$3\alpha_3=4-(\epsilon_3-\epsilon_2),$$
which forces $\alpha_3=1$ and $\epsilon_3-\epsilon_2=1$. Taking $i=2$ in (\ref{eqn3}) gives
$$2\alpha_2=4-(\epsilon_2-\epsilon_1).$$
Since it is impossible to have $\epsilon_2-\epsilon_1=2$, this forces $\alpha_2=2$ and $\epsilon_2-\epsilon_1=0$. We also know $\alpha_1=m=4$. By a similar argument, $\beta_1=4, \beta_2=2, \beta_3=1$. It follows from (\ref{eqsum}) that $\lambda_i=\mu_i$ for all $i$, so $G=H$.

Now consider the case $m=n=3$. Taking $i=2$ in (\ref{eqn3}) gives
$$2\alpha_2=3-(\epsilon_2-\epsilon_1),$$
which forces $\alpha_2=1$. Since $\alpha_1=m=3$ and similarly $\beta_1=3, \beta_2=1$, we obtain $G=H$.

Finally consider the case $m=n=2$. Then we have $\alpha_1=\beta_1=n=2$, and again, $G=H$.\\
\emph{Case 2:} Now assume $\lambda_1=1$. In this case, we also have $\mu_1=1$. Here we may assume $n \geq 3$ and $m \geq 3$, since otherwise $\Char(G)$ or $\Char(H)$ would be a chain by Theorem \ref{thmchain}. So, by Theorem \ref{maxdownchain2}, $J(G)$ has precisely two maximal down set chains $D_1(G)$ and $D_2'(G)$; similarly $J(H)$ has precisely two maximal down set chains $D_1(H)$ and $D_2'(H)$. If $\phi(D_1(G))=D_1(H)$, then, as in Case 1, we obtain $G=H$.

So we may assume $\phi(D_2'(G))=D_1(H)$. It follows that $m=|D_1(H)|=|D_2'(G)|=2(\lambda_{n-1}-\lambda_{n-2})+1=2\alpha_2+1$ This gives
\begin{align*}
\phi(J_G(n-2i,1))&=J_H(m,i+1), \quad\text{for $0 \leq i < \frac{n}2$},\\
\phi(J_G(n-2i-1,1))&=J_H(m-1,i+1), \quad\text{for $0 \leq i < \frac{n-1}2$}.
\end{align*}
 From Theorem \ref{jdual}, it follows that
\begin{align*}
|J_G(n-2i,\lambda_{n-2i})^\downarrow |&=|J_H(m,\mu_m-i)^\downarrow |,\\
|J_G(n-2i-1,\lambda_{n-2i-1})^\downarrow |&=|J_H(m-1,\mu_{m-1}-i)^\downarrow |.
\end{align*}
Applying Theorem \ref{Jsum} yields
\begin{align}
\label{eqn4}\sum_{k=1}^{n-2i}\lambda_k+2i\lambda_{n-2i} &= \sum_{i=1}^m\ul{\mu_k-i}\\
\label{eqn5}\sum_{k=1}^{n-2i-1}\lambda_k+(2i+1)\lambda_{n-2i-1} &= \sum_{k=1}^{m-1}\ul{\mu_k-i}+(\mu_{m-1}-i).
\end{align}
Now,
$$\sum_{k=1}^m\ul{\mu_k-i}=\sum_{k=1}^m(\mu_k-i)+\epsilon_i=\sum_{k=1}^m\mu_k-im+\epsilon_i.$$
Putting this with (\ref{eqn4}) and applying (\ref{eqsum}), this gives
$$\sum_{k=n-2i+1}^n\lambda_k-2i\lambda_{n-2i}=im-\epsilon_i,$$
which is equivalent to
\begin{equation}\label{eqn6}\sum_{k=1}^{2i}k\alpha_k=im-\epsilon_i\end{equation}
On the other hand, in (\ref{eqn5}),
\begin{align*}
\sum_{k=1}^{m-1}\ul{\mu_k-i}+(\mu_{m-1}-i)
&= \sum_{k=1}^m\ul{\mu_k-i}-(\mu_m-i)+(\mu_{m-1}-i)\\
&= \sum_{k=1}^m(\mu_k-i)-(\mu_m-\mu_{m-1})+\epsilon_i\\
&= \sum_{k=1}^m\mu_k-im-1+\epsilon_i
\end{align*}
Putting this with (\ref{eqn5}) and applying (\ref{eqsum}), this gives
$$\sum_{k=n-2i}^n\lambda_k-(2i+1)\lambda_{n-2i-1}=im+1-\epsilon_i,$$
which is equivalent to
\begin{equation}\label{eqn7}
\sum_{k=1}^{2i+1}k\alpha_k=im+1-\epsilon_1.
\end{equation}
Assume for the moment that $n\geq 4$. Then we may apply (\ref{eqn6}) and (\ref{eqn7}) with $i=1$, giving
\begin{align*}
\alpha_1+2\alpha_2&=m\\
\alpha_1+2\alpha_2+3\alpha_3&=m+1.
\end{align*}
Subtracting these yields
$$3\alpha_3=1,$$
which is a contradiction. So it only remains to consider the case $n=m=3$. In this case, we may still apply (\ref{eqn6}) with $i=1$, yielding
$$\alpha_1+2\alpha_2=3.$$
Since $\alpha_1=1$, this gives $\alpha_2=1$. Similarly, $\beta_1=\beta_2=1$. From (\ref{eqsum}), we then obtain $\lambda_i=\mu_i$ for all $i$, hence $G=H$.
\end{proof}

\bibliography{characteristic_subgroups}

\end{document}